\begin{document}
\title{The $C_2$-effective spectral sequence for $C_2$-equivariant connective real $K$-theory}
\author[Kong]{Hana Jia Kong}
\address{Department of Mathematics, The University of Chicago, Chicago, IL 60637}
 \email{hanajk@math.uchicago.edu}

\maketitle

\begin{abstract}
We construct a $C_2$-equivariant spectral sequence for RO$(C_2)$-graded homotopy groups. The construction is by using the motivic effective slice filtration and the $C_2$-equivariant Betti realization. We apply the spectral sequence to compute the RO$(C_2)$-graded homotopy groups of the completed 
$C_2$-equivariant connective real $K$-theory spectrum. The computation reproves the $C_2$-equivariant Adams spectral sequence results by Guillou, Hill, Isaksen and Ravenel.
\end{abstract}

\tableofcontents

\section{Introduction}

In the $G$-equivariant stable homotopy category, the RO($G$)-graded homotopy groups are fundamental invariants. In this paper, we construct a new $C_2$-equivariant spectral sequence, namely the $C_2$-effective spectral sequence, to compute these groups. We then carry out the computation for a specific example,
 which is the first step towards further applications of the $C_2$-effective spectral sequence. 

The $C_2$-effective spectral sequence is constructed from a motivic origin. The two most important ingredients in the construction are the motivic effective slice filtration\cite{voevodsky2002openproblems} and Betti realization\cite{morel1999motivicbetti}.  

The motivic effective slice filtration gives a motivic effective slice tower and dually, a motivic effective slice cotower for each spectrum. These two towers are the analogue of the classical Whitehead tower and Postnikov tower. The associated spectral sequence of the towers, i.e. the motivic effective slice spectral sequence, turns out to be a powerful computational tool. For example, work by R\"{o}ndigs, Spitzweck and \O stv\ae r \cite{rondigs2019motivicsphere} gives the $E_1$-page of the motivic effective spectral sequence of the motivic sphere over any field and proves that it converges conditionally, in the sense of \cite{boardman1999convergence}, to the $\eta$-completed stable motivic homotopy groups where $\eta$ denotes the motivic Hopf map. 

On the other hand, the $C_2$-equivariant Betti realization $\ReB$ bridges the $\bbR$-motivic stable homotopy category and the $C_2$-equivariant stable homotopy category. This functor is studied in various sources, for example, in \cite{heller2016galoisequivariance} and \cite{bachmann2018betti}. Taking the Betti realization of a motivic tower, we obtain a $C_2$-equivariant tower which gives the $C_2$-effective spectral sequence.

 However, as Betti realization is a left Quillen adjoint functor (\cite{heller2016galoisequivariance}), it does not necessarily preserve homotopy limits. As a result, convergence of the $C_2$-effective spectral sequence for a general $C_2$-equivariant spectrum can be complicated.
We discuss the general convergence problem in Subsection \ref{subsec:esss} and show that, under certain assumptions, the convergence problem can be solved. These assumptions are satisfied in the case of $\kokq$, the main example of our paper.

The $C_2$-equivariant spectrum $\kokq$ is the $C_2$-equivariant Betti realization of the $\bbR$-motivic spectrum $\kq$, where $\kq$ denotes the very effective cover of the $(8,4)$-periodic Hermitian $K$-theory spectrum $\KQ.$ The spectrum $\kokq$ can be viewed as the $C_2$-equivariant analogue of the classical connective real $K$-theory spectrum $\mathbf{ko}$. Its mod $2$ cohomology is $A_{C_2}/\!\!/A_{C_2}(1)$, where $A_{C_2}$ and $A_{C_2}(1)$ denote the $C_2$-equivariant Steenrod algebra and its subalgebra generated by $\Sq^1, \Sq^2$. It also detects the first $C_2$-equivariant Hopf element which by abuse of notation is again denoted by $\eta$.

We prove the $C_2$-effective spectral sequence of $\kokq$ converges strongly in the sense of \cite{boardman1999convergence} to its $\eta$-completion. We describe the $E_1$-page of the $C_2$-effective spectral sequence for $\kokq$ in Theorem \ref{thm:koconverge}. The description is entirely explicit, but it requires some notation that will be introduced in Section \ref{esss} .

There is a $C_2$-equivariant map from $\kokq$ to $\ko$, the $C_2$-equivariant connective real $K$-theory spectrum. We prove that this map is an equivalence after $\eta$-completion (Proposition \ref{prop:redkqiskoceta}). 
As a corollary, if we further complete at $2$, the $(2,\eta)$-completion (equivalently, the $2$-completion. See Remark \ref{rmk:2etacpltfirst}) of $\kokq$ is equivalent to the $2$-completion of $\ko$ (Corollary \ref{prop:redkqiskoc2}). The homotopy ring of the latter is computed using the $C_2$-equivariant Adams spectral sequence in \cite{GHIR}. Our computation by the $C_2$-effective spectral sequence gives an independent method, and reproves the results.
It turns out that in the example of $\kokq$, the $C_2$-effective spectral sequence computation is much simpler than the $C_2$-equivariant Adams spectral sequence computation. First, the spectral sequence collapses after the $E_1$-page. Additionally, much of the homotopical structure is detected purely algebraically in the $E_1$-page: we don't need to use Toda brackets arguments to analyze the extension problems.

 Another advantage of the $C_2$-effective spectral sequence is its connection to other computational tools.  
 It is related closely to the motivic stable homotopy theory due to its motivic origin. At the same time, we can compare it to other $C_2$-equivariant computations and classical computations to produce new arguments or complementary results. For example, if we have assumed knowing the $C_2$-equivariant Adams computation in \cite{GHIR}, we can produce different proofs for the differentials and the extension problems in our computation. Another example is Lemma \ref{rholem}, where we compare $C_2$-equivariant and classical information to analyze hidden extensions.

There have been many computational tools established in the $C_2$-equivariant stable homotopy category which are formally similar, e.g. 
 the spectral sequences associated to the equivariant slice filtration (\cite{hill2016nonexistence},\cite{dugger1999postnikov}) and to the equivariant regular slice filtration (\cite{ullman2013slice}). We would like to point out that the $C_2$-effective spectral sequence is in general different from them. In fact, the $C_2$-equivariant slices for $\ko$ have not been completely identified, while its $C_2$-effective slices are known. The same is true for the $C_2$-equivariant sphere spectrum. A comparison of these spectral sequence can be found in \cite{heard2018equivariant}. 
 
  We have shown via the example of $\kokq$ that this spectral sequence allows us to give alternative computations that are independent and simpler, and we expect it will be useful for other $C_2$-equivariant computations as well. 

\subsection{Organization}
In Section \ref{esss}, we construct the $C_2$-effective spectral sequence, discuss the convergence problem, compute the $E_1$-page, and solve the convergence problem for $\kokq$. In Section \ref{EM}, we recall the RO($C_2$)-graded homotopy ring of $\HZt$ and use a $2$-Bockstein spectral sequence to compute the RO($C_2$)-graded homotopy ring of $\HZcplt.$ We carry out the computation of the $C_2$-effective spectral sequence for $\kokq$ in Section \ref{diff}, solve the extension problems in Section \ref{ext}, and attach the charts which depict the results in Section \ref{charts}.

\subsection{Notation and conventions}\hfill \\

%

We employ the following grading conventions:
\begin{itemize}
  \item In the bidegree of the motivic sphere $S^{s,w}$, the degree $s$ is the topological degree and $w$ is the motivic weight. For example, the simplicial sphere is denoted by $S^{1,0}$ and the Tate sphere $\bbA^1-0$ by $S^{1,1}$.
  \item We bigrade the $C_2$-equivariant spheres according to the motivic convention. The trivial representation $1$ has bidegree $(1,0)$, while the sign representation $\sigma$ has bidegree $(1,1).$ We use the term coweight to refer to the number of copies of the trivial representation. Therefore, the equivariant degree $p+q\sigma$ is bigraded as $(p+q, q)$ with coweight $p.$
\end{itemize}

We employ notations as follows:
\begin{itemize}
	\item $\SH(k)$ is the motivic stable homotopy category over the base field $k$.
	\item $\SHeff(k)$ is the effective motivic stable homotopy category over the base field $k$.
    \item $\MZ$ (resp. $\MZt$) is the motivic Eilenberg-MacLane spectrum representing the motivic cohomology with $\bbZ$ (resp. $\bbF_2$) coefficients.
    \item $\kq$ is the very effective cover of the hermitian $K$-theory spectrum $\KQ$.
    \item $\kgl$ is the effective cover, or equivalently, the very effective cover of the algebraic $K$-theory spectrum $\KGL$.   
    \item $\HZ$ (resp. $\HZt$, $\HZcplt$) is the $C_2$-equivariant Eilenberg-MacLane spectrum representing $C_2$-equivariant cohomology with coefficients in the constant Mackey functor $\m{\bbZ}$ (resp. $\udl{\bbF_2}$, $\udl{\bbZ_2}$) Mackey functors.
    \item $\kR$ is the equivariant connective cover of the Atiyah's Real $K$-theory spectrum $\KR$.
    \item $\ko$ is the equivariant connective cover of the $C_2$-equivariant real $K$-theory spectrum $\KO$.
    \item $\kokq$ is the $C_2$-equivariant Betti realization of $\kq.$ It is equivalent to $\ko$ after $\eta$ completion (see Proposition \ref{prop:redkqiskoceta}).
    \item $\mathbf{ko}$ is the classical connective real $K$-theory spectrum.

\end{itemize}

\subsection{Acknowledgements}
First and foremost, the author would like to express her sincere gratitude to Dan Isaksen for suggesting this project, sharing his insights at various stages of the project, and checking all the proofs in this manuscript. The author would like to thank Peter May for his support, suggestions on organization, and for carefully reading several drafts of this paper. The author would also like to thank Zhouli Xu for his clear and detailed explanation about subtleties in extension problems. Thanks are also due to Mark Behrens, J.D. Quigley, Doug Ravenel, XiaoLin Danny Shi, Guozhen Wang, and Mingcong Zeng for many helpful conversations.


\section{The \texorpdfstring{$C_2$}{C2}-effective filtration}
\label{esss}

\subsection{The motivic effective filtration}\hfill\\
\label{subsec:esss}

 In this subsection, we review motivic stable homotopy theory and the motivic effective slice filtration which was first proposed by Voevodsky in \cite{voevodsky2002openproblems}.
 
Let $\SH(k)$ denote the motivic stable homotopy category over the base field $k$.
Let $\SHeff(k)$ be the smallest full triangulated subcategory of $\Sh(k)$ that is closed under direct sums and
contains all suspension spectra of all smooth schemes. We define its $(p, q)$ suspension to be the subcategory: $$\Sigma^{p,q}\SHeff(k):=\{\Sigma^{p,q}X|X\in\SHeff \}.$$
Since $\SHeff(k)$ is closed under simplicial suspensions and simplicial desuspensions, we have an equivalence of subcategories $\Sigma^{p,q}\SHeff(k) \simeq \Sigma^{0,q}\SHeff(k)$. 
These suspension subcategories form a motivic filtration, namely the motivic effective slice filtration:
\begin{equation*}
\cdots \subset \Sigma^{0,q+1}\SHeff(k)\subset \Sigma^{0,q}\SHeff(k)\subset \Sigma^{0,q-1}\SHeff(k) \subset \cdots .
\end{equation*} 

The inclusion 
$i_q: \Sigma^{0,q}\SHeff(k) \hookrightarrow \SH(k)$ 
preserves colimits. It has a right adjoint functor $r_q: \SH(k)\to \Sigma^{0,q}\SHeff(k)$. 

\begin{definition}
	Define the $q$-th motivic effective slice cover functor $f_q$ to be the composite 
	$$f_q:=i_q\circ r_q: \SH(k)\to \SH(k).$$
\end{definition}

The formal construction gives a sequence of natural transformations:
\begin{equation*}
\cdots \to f_{q+1} \to f_q  \to f_{q-1}  \cdots .
\end{equation*} 

\begin{definition}
	Let $E$ be any motivic spectrum. Define its motivic effective slice tower to be the following sequence of maps between motivic spectra:
$$\cdots \to f_{q+1}(E) \to f_q(E)  \to f_{q-1}(E)  \cdots .$$
\end{definition}

Dually, we have motivic effective slice cocovers and motivic effective slice cotowers.

\begin{definition}
Define the $q$-th motivic effective slice cocover to be the functor $f^q$ such that $f_{q+1}\to \id \to f^q$ forms a cofiber sequence.
\end{definition}
\begin{definition}

Let $E$ be any motivic spectrum. Define its motivic effective slice cotower to be the following sequence of maps between motivic spectra:
$$\cdots \to f^{q+1}(E) \to f^q(E)  \to f^{q-1}(E)\to  \cdots .$$
\end{definition}

\begin{definition}
	Define the $q$-th motivic effective slice functor $s_q$ such that $f_{q+1}\to f_q\to s_q$ forms a cofiber sequence.
\end{definition}

For any motivic spectrum $E$, its motivic effective slice tower has associated graded slices $s_*(E).$ By passing to motivic homotopy groups, we obtain a spectral sequence whose $E_1$-page is the homotopy groups of these slices:
$$E_1^{s,q,w}=\pi_{s,w}s_q(E)\Rightarrow \pi_{s,w}(E) .$$


\subsection{The \texorpdfstring{$C_2$}{C2}-effective spectral sequence}\hfill\\

The motivic effective slice filtration is defined for general base fields. 
When the base field $k$ admits an embedding to $\bbC$, we have a Betti realization functor $\ReG:\SH(k)\to \SH$ by taking the $\bbC$-points of smooth schemes(\cite{morel1999motivicbetti}). When $k$ further admits an embedding to $\bbR$, the conjugation on complex numbers induces a $C_2$-action on the target of the Betti realization functor. In such cases, we get a $C_2$-equivariant Betti realization functor $$\ReB:\SH(k)\to \SH_{C_2}.$$ We restrict our attention to the base fields $\bbR$ and $\bbC$. In fact, we have the following commutative diagram of categories and functors:
\begin{equation}
\label{magicsquare}
\begin{tikzcd}
\SH(\bbR)\ar[r,"\ReB"]\ar[d,"i^*"] & \SH_{C_2}\ar[d, "U"] \\	
\SH(\bbC)\ar[r,"\ReG"] & \SH .
\end{tikzcd}
\end{equation}
The functor $i^*$ is induced by the embedding $i:\bbR\to\bbC$, and $U$ is the forgetful functor.

The $C_2$-equivariant Betti realization functor is well studied in previous work. We record some of the relevant properties as shown in \cite[Prop 4.8, 4.17]{heller2016galoisequivariance}. 

\begin{prop}(Heller-Ormsby)
\label{prop:betti}
The $C_2$-Betti realization functor is strong symmetric monoidal, left Quillen, and it satisfies that 
\begin{itemize}
	\item $\ReB(\MZ)\simeq \HZ$.
	\item $\ReB(\MZt)\simeq \HZt$.
	\item $\ReB(S^{a,b})\simeq S^{a,b}$.
\end{itemize}	
\end{prop}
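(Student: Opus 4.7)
The plan is to verify the three claims in turn, starting from the construction of the realization functor. First I would set up $\ReB$ at the point-set level as the functor sending a smooth $\bbR$-scheme $X$ to $X(\bbC)$ equipped with the complex conjugation $C_2$-action, extend by left Kan extension along the Yoneda embedding to pointed simplicial presheaves, and then stabilize with respect to the image of the Tate sphere. The left Quillen property requires checking that $\ReB$ preserves colimits (automatic from being a left adjoint) and sends generating (trivial) cofibrations to equivariant (trivial) cofibrations; this reduces to the observation that $X(\bbC)$ carries the structure of a $C_2$-CW complex for every smooth $\bbR$-scheme $X$. The strong symmetric monoidal structure follows from the natural $C_2$-homeomorphism $(X \times_{\bbR} Y)(\bbC) \cong X(\bbC) \times Y(\bbC)$, which extends compatibly to smash products of pointed presheaves and then to spectra.

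For the sphere identification, I would treat the two building blocks of the motivic bigrading separately. The simplicial sphere $S^{1,0}$ Betti-realizes to the topological circle with trivial $C_2$-action, i.e.\ $S^{1,0}$ in the equivariant convention. For the Tate sphere $S^{1,1} = \bbA^1 - 0$, the $\bbC$-points form $\bbC^\times$ with complex conjugation, which is $C_2$-equivariantly homotopy equivalent to the unit circle $S^\sigma = S^{1,1}$. The general case $S^{a,b} \simeq (S^{1,0})^{\wedge a-b} \wedge (S^{1,1})^{\wedge b}$ then follows from the strong monoidal structure established above.

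The identification of the Eilenberg-MacLane spectra is the most delicate step. One approach is to use that $\MZ$ can be built, via a motivic Dold-Thom construction, as the telescope of infinite symmetric products of motivic spheres. Betti realization commutes with symmetric products at the point-set level, and an equivariant Dold-Thom theorem then identifies the resulting $C_2$-spectrum as representing Bredon cohomology with constant $\bbZ$-coefficients, namely $\HZ$. Alternatively, one can construct a comparison map $\ReB(\MZ) \to \HZ$ directly and check by Brown representability that it induces an isomorphism on the natural generators of motivic and Bredon cohomology in the relevant bidegrees. The case of $\MZt$ then follows either by reducing coefficients modulo $2$ or by repeating the argument with $\bbF_2$-coefficients throughout.

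The main obstacle will be executing the Eilenberg-MacLane identification rigorously: it depends on careful model-categorical compatibility between motivic and equivariant structures, ensuring that fibrant replacements commute with $\ReB$ up to equivalence on the relevant objects, as well as on a clean equivariant Dold-Thom statement matching the motivic symmetric-product presentation. Heller and Ormsby address precisely this point in \cite{heller2016galoisequivariance} by choosing model structures for which $\ReB$ is visibly left Quillen and on which the comparison of representing spectra can be read off from the generating cells.
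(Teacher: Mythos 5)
The paper does not actually prove this proposition: it is recorded as a quotation of Heller--Ormsby \cite[Prop.~4.8, 4.17]{heller2016galoisequivariance}, so there is no internal argument to compare yours against. Your outline is, in substance, a reconstruction of the proof in the cited source: $\ReB$ is built by sending a smooth $\bbR$-scheme $X$ to $X(\bbC)$ with the conjugation action, left Kan extending, and stabilizing; left Quillen-ness and strong symmetric monoidality are checked on generators; the spheres are computed directly ($\bbA^1-0$ realizing to $\bbC^\times\simeq S^\sigma$); and the Eilenberg--MacLane identification goes through the symmetric-product (Suslin--Voevodsky/Dold--Thom) presentation of $\MZ$ together with the equivariant Dold--Thom theorem identifying infinite symmetric products of representation spheres with the Eilenberg--MacLane spectrum of the constant Mackey functor, with $\MZt$ handled by reduction of coefficients. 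Two points in your sketch deserve more care than "sends generating (trivial) cofibrations to cofibrations": since the motivic model structure is a localization, you must check that $\ReB$ takes $\bbA^1$-equivalences and Nisnevich-local equivalences to $C_2$-equivalences (e.g.\ that $\bbA^1(\bbC)$ is equivariantly contractible and that distinguished Nisnevich squares realize to homotopy pushouts), and the symmetric-product model of $\MZ$ is only available in characteristic zero, which is of course satisfied over $\bbR$. Both points are exactly what Heller and Ormsby address, so your proposal is a faithful account of the external proof rather than a new route; within the present paper the correct move is simply the citation.
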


Let $X$ be a $C_2$-equivariant spectrum and suppose that there exists a motivic spectrum $\overline{X}$ in $\SH(\bbR)$ such that $X\simeq \ReB(\overline{X})$. We apply $\ReB$ to the motivic effective slice cotower (resp., tower) of $\overline{X}$ to obtain a cotower (resp., tower) of equivariant spectra for $X$.

\begin{defn}
	Define the $C_2$-effective cotower (resp., tower) of $X$ associated to $\overline{X}$ to be the $C_2$-equivariant Betti realization of the motivic effective slice cotower (resp., tower) of $\overline{X}.$ 
	$$f^*_{\overline{X}}X:=\ReB(f^*\overline{X}),$$ 
	$$f_*^{\overline{X}}X:=\ReB(f_*\overline{X}),$$ 
	Define the $C_2$-effective slices of $X$ associated to $\overline{X}$ to be the fiber in the following cofiber sequence:
	$$s^{\overline{X}}_q(X)\to f_{\overline{X}}^q(X)\to f_{\overline{X}}^{q-1}(X),$$
	or equivalently, the cofiber in the following cofiber sequence:
	$$f^{\overline{X}}_{q+1}(X)\to f^{\overline{X}}_{q}(X)\to s^{\overline{X}}_q(X).$$
\end{defn}

\begin{rmk}
	In the above definition, the construction of the cotower and tower depends on the choice of $\overline{X}$. When the choice of $\overline{X}$ is clear in the context, we omit $\overline{X}$ from the notation, writing $f^*(X)$ instead of $f^*_{\overline{X}}(X)$ for example.
\end{rmk}

\begin{rmk}
As mentioned in the introduction, for general $C_2$-equivariant spectra, the $C_2$-equivariant effective slices are different from the slices under the filtrations defined in \cite{dugger1999postnikov}, \cite{hill2016nonexistence} or \cite{ullman2013slice}. Heard proved that when certain assumptions are satisfied, the motivic effective slices realize to the $C_2$-equivariant slices\cite[Theorem 5.15]{heard2018equivariant}. We consistently use the adjective 'effective' to distinguish our spectral sequence from these other spectral sequences that are formally similar.  
\end{rmk}

Since $\ReB$ preserves cofiber sequences, the $C_2$-effective slices $s_q(X)$ are equivalent to $\ReB(s_q(\overline{X})).$ 

\begin{defn}
	Define the effective completion of $X$ associated to $\overline{X}$ to be the homotopy limit of the $C_2$-effective cotower associated to $\overline{X}$, i.e.
	$$ec_{\overline{X}}(X):=\holim_q f_{\overline{X}}^{q}(X).$$
\end{defn}
Passing to homotopy groups, we have a spectral sequence of the form:
$$E^1_{s,q,w}=\pi^{C_2}_{s,w}(s_q^{\overline{X}}(X))\implies \pi^{C_2}_{s,w}(ec_{\overline{X}}(X)),$$
for a $C_2$-equivariant spectrum $X$ such that $X\simeq \ReB(\overline{X}).$

\begin{rmk}
	A spectral sequence associated to a Postnikov-like tower (which in our case is referred to as cotower) should compute the homotopy limit of the tower. 	In the motivic effective slice context, we have the notion of the 'slice completion' of a motivic spectrum $\overline{X}$, denoted by $sc(\overline{X})$. It is defined as the homotopy limit of the motivic effective slice cotower: $$sc(\overline{X}):=\holim_q f^{q}(\overline{X}).$$ 
	In general, a motivic spectrum is not necessarily equivalent to its slice completion. When it is, we say that the motivic spectrum is slice complete. 
	Similarly, a $C_2$-equivariant spectrum is not necessarily equivalent to its effective completion.
	Since the Betti realization functor is a left Quillen adjoint (Proposition \ref{prop:betti}), it does not necessarily preserve homotopy limits. As a result, the effective completion $ec_{\overline{X}}(X)$ is not necessarily equivalent to $\ReB(sc(\overline{X}))$.
\end{rmk}

\subsection{\texorpdfstring{$C_2$}{C2}-effective completions}

We now prove several propositions about the effective completion which will be used later in the paper regarding the $C_2$-effective spectral sequence of $\kokq.$ 

\begin{definition}
	Let $X$ be a $C_2$-equivariant spectrum. We say $X$ is effective complete, if there exists a $\bbR$-motivic spectrum $\overline{X}$ such that the following are satisfied:
	\begin{enumerate}
  \item $\ReB(\overline{X})\simeq X.$
  \item The canonical map $\overline{X}\to \holim_q f^{q}(\overline{X})$ is an equivalence, i.e. the motivic spectrum $\overline{X}$ is slice complete.
  \item The canonical map $X\to ec_{\overline{X}}(X)$ is an equivalence.
\end{enumerate}

\end{definition}
We show that effective completeness has the 2 out of 3 property. 
\begin{prop}
	Let $A\to B\to C$ be a sequence of maps of $C_2$-equivariant spectra such that it is the image of the Betti realization of a $\bbR$-motivic cofiber sequence
	$\overline{A}\to\overline{B}\to\overline{C}.$
	Among the following three conditions, any two of them implies the third.
	\begin{itemize}
		\item $A$ is effective complete with respect to $\overline{A}$.
		\item $B$ is effective complete with respect to $\overline{B}$.
		\item $C$ is effective complete with respect to $\overline{C}$.
	\end{itemize}
	\label{closelemma}
\end{prop}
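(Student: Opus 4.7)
The plan is to argue that the two remaining conditions in the definition of effective completeness, namely slice completeness of the motivic object and the canonical map to the effective completion being an equivalence, each have the two-out-of-three property on cofiber sequences. The first condition (that $\ReB(\overline{X}) \simeq X$) holds automatically for all three spectra in the sequence by hypothesis, so these two propagations suffice.

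First I would upgrade the motivic cofiber sequence to a tower of cofiber sequences. For each $q$, applying the effective slice cocover functor $f^q$ (a triangulated endofunctor of $\SH(\bbR)$, since $f_{q+1}$ is triangulated and $f^q$ is its cofiber with the identity) produces a cofiber sequence
\[
f^q(\overline{A})\to f^q(\overline{B})\to f^q(\overline{C}),
\]
and these assemble over $q$ into a tower of cofiber sequences equipped with a compatible map from the constant tower on $\overline{A}\to\overline{B}\to\overline{C}$. Since $\holim_q$ of a tower of cofiber sequences is again a cofiber sequence, passing to the limit yields
\[
sc(\overline{A})\to sc(\overline{B})\to sc(\overline{C}),
\]
together with a map of cofiber sequences from $\overline{A}\to\overline{B}\to\overline{C}$ induced by the canonical slice-completion maps. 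Applying the five lemma (on the long exact sequence in motivic homotopy groups) to this comparison shows that if two of $\overline{A},\overline{B},\overline{C}$ are slice complete, so is the third. This handles condition (2).

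For condition (3), I would repeat the same construction but apply $\ReB$ first. Since $\ReB$ is strong symmetric monoidal and left Quillen by Proposition \ref{prop:betti}, it preserves cofiber sequences, so applying $\ReB\circ f^q$ to $\overline{A}\to\overline{B}\to\overline{C}$ gives, for each $q$, a cofiber sequence in $\SH_{C_2}$
\[
f^q_{\overline{A}}(A)\to f^q_{\overline{B}}(B)\to f^q_{\overline{C}}(C).
\]
Taking $\holim_q$ again preserves the cofiber sequence and yields a comparison of cofiber sequences
\[
\begin{tikzcd}
A\ar[r]\ar[d] & B\ar[r]\ar[d] & C\ar[d] \\
ec_{\overline{A}}(A)\ar[r] & ec_{\overline{B}}(B)\ar[r] & ec_{\overline{C}}(C),
\end{tikzcd}
\]
where the vertical maps are the canonical ones into the effective completions. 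The five lemma on the associated long exact sequences of $RO(C_2)$-graded homotopy groups shows that if two of the vertical maps are equivalences, so is the third. Combining the two-out-of-three property for (2) and for (3) gives the statement.

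The only real technical point is the fact that $\holim_q$ of a tower of cofiber sequences in $\SH(\bbR)$ (respectively $\SH_{C_2}$) is a cofiber sequence; this is standard since homotopy limits commute with taking fibers, and cofiber sequences in stable categories coincide with fiber sequences. There is no subtle interchange of $\ReB$ with $\holim$ being asserted here: I never claim that $ec_{\overline{X}}(X)\simeq \ReB(sc(\overline{X}))$, only that applying $\ReB$ termwise to a tower of cofiber sequences and then taking $\holim_q$ still produces a cofiber sequence, which is exactly what is needed.
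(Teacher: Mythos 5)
Your proof is correct and follows essentially the same route as the paper: apply $f^q$ (and $\ReB$) levelwise to the cofiber sequence to get a tower of cofiber sequences, pass to the homotopy limit to obtain a cofiber sequence of completions, and compare it with the original cofiber sequence to conclude two-out-of-three. The only difference is that you also explicitly propagate the motivic slice-completeness condition (2), which the paper's proof treats implicitly; this is a harmless, slightly more careful addition rather than a different method.
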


\begin{proof}

We consider the following commutative diagram:
$$
\xymatrix{
	A \ar[r]\ar[d]& B\ar[d]\ar[r] & C\ar[d]\\
	ec_{\overline{A}}(A)\ar[r] & ec_{\overline{B}}(B)\ar[r] & ec_{\overline{C}}(C)
}.$$

The bottom row is the homotopy limit of the sequence of the $C_2$-effective cotowers 
$$ f_{\overline{A}}^*(A) \to f_{\overline{B}}^*(B) \to f_{\overline{C}}^*(C).$$
Since Betti realization and all $f^q$ preserve cofiber sequences, this sequence of cotowers has each level as a cofiber sequence. 
Passing to homotopy limit, we get that the bottom row forms a cofiber sequence. Since the top row is also a cofiber sequence, we conclude the 2 out of 3 property.
\end{proof}

By the above proposition, we have that effective completeness is closed under taking fibers, cofibers, extensions and suspensions.

The following result is straightforward by definition.
\begin{prop}
\label{slicearecplt}
	Let $X$ be a $\bbR$-motivic spectrum. For any $q\in \bbZ$, $\ReB(s_q(X))$ (resp., $\ReB(f^q(X))$) is effective complete with respect to $s_q(X)$ (resp., $f^q(X)$).
\end{prop}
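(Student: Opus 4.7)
The plan is to verify the three conditions of effective completeness for $\overline{X} = s_q(X)$ and, separately, for $\overline{X} = f^q(X)$. Condition (1), namely $\ReB(\overline{X}) \simeq X$, holds immediately by construction in both cases since we are taking $X := \ReB(\overline{X})$ by fiat.

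The central observation for conditions (2) and (3) will be that $f_{p+1}(\overline{X}) \simeq 0$ for every $p \geq q$. For $\overline{X} = s_q(X)$, I would apply $f_{p+1}$ to the defining cofiber sequence $f_{q+1}(X) \to f_q(X) \to s_q(X)$: since $p+1 \geq q+1$, the inclusions $\Sigma^{0,p+1}\SHeff \subset \Sigma^{0,q+1}\SHeff \subset \Sigma^{0,q}\SHeff$ give $f_{p+1}f_{q+1} = f_{p+1}f_q = f_{p+1}$, so both $f_{p+1}(f_{q+1}(X))$ and $f_{p+1}(f_q(X))$ identify with $f_{p+1}(X)$ via the identity. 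This forces $f_{p+1}(s_q(X)) = 0$. The argument for $\overline{X} = f^q(X)$ is identical, applied instead to the cofiber sequence $f_{q+1}(X) \to X \to f^q(X)$.

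From the defining cofiber sequence $f_{p+1}(\overline{X}) \to \overline{X} \to f^p(\overline{X})$, this vanishing yields $f^p(\overline{X}) \simeq \overline{X}$ for all $p \geq q$. Thus the motivic effective cotower of $\overline{X}$ is eventually constant, equal to $\overline{X}$ as $p \to +\infty$; its homotopy limit is therefore $\overline{X}$ itself, establishing slice completeness and condition (2). Applying $\ReB$ level-wise yields a $C_2$-equivariant cotower that is also eventually constant at $\ReB(\overline{X})$, so $ec_{\overline{X}}(\ReB(\overline{X})) = \holim_p \ReB(f^p(\overline{X})) \simeq \ReB(\overline{X})$, establishing condition (3).

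The proof amounts to a bookkeeping exercise using only the filtration inclusions $\Sigma^{0,p+1}\SHeff \subset \Sigma^{0,q+1}\SHeff$ for $p \geq q$ together with the defining cofiber sequences for $s_q$ and $f^q$; I expect no genuine obstacle to arise, which is consistent with the paper's remark that this proposition is straightforward by definition.
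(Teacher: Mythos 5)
Your proof is correct and is precisely the verification behind the paper's unproved ``straightforward by definition'' claim: the key facts you invoke --- that $f_{p+1}f_{b} \simeq f_{p+1}$ for $p+1 \geq b$ and that the cover functors are exact, so that the cotower of $s_q(X)$ (resp.\ $f^q(X)$) is constant for $p \geq q$ both before and after applying $\ReB$ --- are standard properties of the effective slice filtration, and they give conditions (2) and (3) exactly as you describe. No gaps; this matches the intended argument.
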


\begin{definition}
	Let $\eta\in\pi_{1,1}(S^{0,0})$ be the first $\bbR$-motivic (and $C_2$-equivariant) Hopf map.
\end{definition}

\begin{rmk}
	Let $\bbA^1$ be the affine line and $\bbP^1$ be the projective line. The first motivic Hopf map $\eta$ is the projection $\bbA^2-0\to \bbP^1,$ i.e., a map $S^{3,2}\to S^{2,1}.$ 
	
	The $ C_2$-equivariant Hopf map is the projection map:
	$$\bbC^2-0\to \bbC\bbP^1: (x,y)\mapsto [x,y],$$
	where the $C_2$-action is given by the complex conjugation.
	Since $\bbC^2-0\cong S^{1+2\sigma}$ and $S^{1+\sigma}\cong \bbC\bbP^1$, this is also a map $S^{3,2}\to S^{2,1}$ under our bigrading convention.
	
	In fact, we can define the first Hopf element in $\SH(\bbR)$, $\SH(\bbC)$, $\SH_{C_2}$ and $\SH$. These definitions are compatible with the functors in Diagram \ref{magicsquare}. We abuse notation and use the same name $\eta$ for all these maps. 
	
\end{rmk}

\begin{definition}
	Let $X$ be an $\bbR$-motivic ($C_2$-equivariant) spectrum. Define the $\eta$-completion of $X$ to be 
	$$X^{\wedge}_\eta:=\holim_q X/\eta^q.$$ 
	We say $X$ is $\eta$-complete if the map 
	$X\to X^\wedge_\eta $ is a weak equivalence.
\end{definition}

\begin{rmk}
	Suppose we have an $\bbR$-motivic or $C_2$-equivariant cofiber sequence $A\to B\to C$. We consider the diagram 
	$$
	\begin{tikzcd}
	A\ar[r]\ar[d] & B\ar[r]\ar[d] & C\ar[d]\\
	A^\wedge_\eta\ar[r] & B^\wedge_\eta\ar[r] & C^\wedge_\eta 
	\end{tikzcd}
	$$
	We can prove similarly as Proposition \ref{closelemma} that $\eta$-completeness has the 2 out of 3 property. It is closed under taking fibers, cofibers, extensions and suspensions.
	\label{closelemmaeta}
\end{rmk}

\begin{prop}
	Let $X$ be a $C_2$-equivariant spectrum and let $\overline{X}$ be an $\bbR$-motivic spectrum such that $X\simeq \ReB(\overline{X})$. If $X\simeq f_n^{\overline{X}}(X)$ for some $n$, then $ec_{\overline{X}}(X)$ is $\eta$-complete. 	
\label{etacplt}
\end{prop}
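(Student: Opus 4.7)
The plan is to show that $\eta$ acts nilpotently on every sufficiently high term $f^q_{\overline X}(X)$ of the $C_2$-effective cotower, and then to invoke closure of $\eta$-complete spectra under sequential homotopy limits.

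I would first use the hypothesis $X \simeq f_n^{\overline X}(X) = \ReB(f_n\overline X)$ to conclude that
$$f^{n-1}_{\overline X}(X) = \mathrm{cofib}\!\bigl(\ReB(f_n\overline X)\to \ReB(\overline X)\bigr) \simeq 0.$$
Iterating the cofiber sequences $s^{\overline X}_k(X) \to f^k_{\overline X}(X) \to f^{k-1}_{\overline X}(X)$ starting from $f^{n-1}_{\overline X}(X)\simeq 0$ then presents each $f^q_{\overline X}(X)$ with $q\ge n$ as a $(q-n+1)$-step iterated extension of the Betti realized slices $\ReB(s_n(\overline X)),\dots,\ReB(s_q(\overline X))$.

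Next I would invoke Pelaez's theorem that each motivic effective slice $s_k(\overline X)$ carries a canonical $\MZ$-module structure. Since $\pi_{1,1}(\MZ)=0$, multiplication by $\eta$ is null on $\MZ$ and hence on every $\MZ$-module, so $\eta\cdot s_k(\overline X)=0$. Because $\ReB$ is strong symmetric monoidal with $\ReB(\MZ)=\HZ$ and $\ReB(\eta)=\eta$ (Proposition \ref{prop:betti}), the vanishing descends to $\eta\cdot \ReB(s_k(\overline X))=0$. A short naturality chase on a cofiber sequence $A\to B\to C$ shows that $\eta^a=0$ on $A$ together with $\eta^c=0$ on $C$ forces $\eta^{a+c}=0$ on $B$; by induction $\eta^{q-n+1}$ annihilates $f^q_{\overline X}(X)$ for every $q\ge n$.

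Finally I would observe that if $\eta^N=0$ on a spectrum $Y$ then the fiber of $Y\to Y^\wedge_\eta$, which is $\holim_k \Sigma^{k,k}Y$ with transition maps multiplication by $\eta$, has every $N$-fold transition null. Mittag-Leffler applied in the associated Milnor $\lim/\lim^1$ sequence then forces $\holim_k \Sigma^{k,k}Y\simeq 0$, so $Y$ is $\eta$-complete. In particular every $f^q_{\overline X}(X)$ with $q\ge n$ is $\eta$-complete, and since $\eta$-completion commutes with sequential homotopy limits, $ec_{\overline X}(X)=\holim_q f^q_{\overline X}(X)$ is $\eta$-complete.

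The main obstacle is the external input that motivic effective slices are $\MZ$-modules, due to Pelaez (building on R\"ondigs-\O stv\ae r); granted this, the remainder is a routine induction along cofiber sequences combined with standard closure of $\eta$-completion under homotopy limits.
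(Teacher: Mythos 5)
Your proof is correct and follows essentially the same route as the paper: the slices are $\MZ$-modules realizing to modules over $\ReB(\MZ)\simeq\HZ$ on which $\eta$ acts trivially, the hypothesis truncates the cotower so each cocover $f^q_{\overline{X}}(X)$ is a finite iterated extension of slices, and one concludes by passing to the homotopy limit via an interchange of limits. The only cosmetic difference is that you propagate nilpotence of the $\eta$-action through the extensions and deduce $\eta$-completeness at the end, whereas the paper first observes that the slices themselves are $\eta$-complete and then invokes closure of $\eta$-completeness under extensions (Remark \ref{closelemmaeta}).
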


\begin{proof}
By \cite{rondigs2019motivicsphere}, the motivic zeroth slice $s_0(S^{0,0})$ of the motivic sphere spectrum is equivalent to $\MZ$. Therefore each motivic effective slice $s_q(\overline{X})$ is an $\MZ$-module. Applying $C_2$-equivariant Betti realization, we have that each $C_2$-effective slice $s_q(X)$ is an $\HZ$-module.
We see from the homotopy ring of $\HZ$ as computed in \cite{hill2016nonexistence} that the element $\eta$ acts trivially on $\HZ$. Therefore it also acts trivially on all $\HZ$-modules, in particular, all $C_2$-effective slices $s_q(X)$. As a result, all $C_2$-effective slices are $\eta$-complete.

By the assumption, the $C_2$-effective cotower of $X$ has the form:
$$\cdots\to f^{n+2}_{\overline{X}}(X)\to f^{n+1}_{\overline{X}}(X)\to 0\to 0 \cdots.$$
The $(n+1)$-st effective cocover $f^{n+1}_{\overline{X}}(X)$ is equivalent to the $(n+1)$-st effective slice $ s^{\overline{X}}_{n+1}(X)$ which is $\eta$-complete by the discussion above. Since $\eta$-completeness is closed under extensions by Remark \ref{closelemmaeta}, every effective cocover $f^{q}_{\overline{X}}(X)$ is $\eta$-complete. As a result, we have 
 \begin{align*}
 	ec_{\overline{X}}(X)&=\holim_q f^{q}_{\overline{X}}(X) \simeq \holim_q \holim_k f^{q}_{\overline{X}}(X)/\eta^k \\
 	&\simeq \holim_k \holim_q f^{q}_{\overline{X}}(X)/\eta^k\simeq \holim_k ec_{\overline{X}}(X)/\eta^k\simeq ec_{\overline{X}}(X)^{\wedge}_\eta.
 \end{align*}
\end{proof}

\begin{prop} 
\label{isoeta}
Let $X$ be a $C_2$-equivariant spectrum and let $\overline{X}$ be an $\bbR$-motivic spectrum such that $X\simeq \ReB(\overline{X})$. For any positive integer $k$, there is a canonical equivalence:
$$ec_{\overline{X}/\eta^k}(X/\eta^k)\simeq ec_{\overline{X}}(X)/\eta^k.$$

\end{prop}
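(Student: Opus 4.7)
The plan is to identify both sides as equivalent homotopy limits, first comparing the effective cocovers levelwise and then passing to $\holim_q$. To set things up, observe that $\overline{X}/\eta^k$ is by definition the cofiber of $\eta^k : \Sigma^{k,k}\overline{X} \to \overline{X}$ in $\SH(\bbR)$, and since $\ReB$ preserves cofiber sequences (Proposition~\ref{prop:betti}), one has $\ReB(\overline{X}/\eta^k)\simeq X/\eta^k$, so the left-hand side is defined with $\overline{X}/\eta^k$ playing the role of the $\bbR$-motivic lift.

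First, I will show that at each level $q$ there is a natural equivalence $f^q_{\overline{X}/\eta^k}(X/\eta^k) \simeq f^q_{\overline{X}}(X)/\eta^k$. The effective cocover functor $f^q$ is exact on $\SH(\bbR)$: it fits in a cofiber sequence $f_{q+1}\to \id \to f^q$ of exact functors, because $f_q = i_q \circ r_q$ is the composite of an inclusion of a triangulated subcategory with its right adjoint. Applying $f^q$ to the defining cofiber sequence of $\overline{X}/\eta^k$ therefore yields
$$\Sigma^{k,k} f^q(\overline{X}) \xrightarrow{\eta^k} f^q(\overline{X}) \to f^q(\overline{X}/\eta^k),$$
and applying $\ReB$ gives the claimed equivalence.

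Next, I will take $\holim_q$. In the stable category $\SH_{C_2}$, every cofiber sequence is simultaneously a fiber sequence, and homotopy limits preserve fiber sequences. Applying $\holim_q$ to the levelwise fiber sequences produced in the previous step yields a fiber sequence whose first two terms are $\Sigma^{k,k} ec_{\overline{X}}(X) \xrightarrow{\eta^k} ec_{\overline{X}}(X)$, so its third term must be $ec_{\overline{X}}(X)/\eta^k$. Chaining the two identifications gives
$$ec_{\overline{X}/\eta^k}(X/\eta^k) = \holim_q f^q_{\overline{X}/\eta^k}(X/\eta^k) \simeq \holim_q \bigl(f^q_{\overline{X}}(X)/\eta^k\bigr) \simeq ec_{\overline{X}}(X)/\eta^k,$$
as desired.

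The argument is entirely formal and poses no serious obstacle; the only points to verify are the exactness of $f^q$ and of $\ReB$, and the commutativity of $\holim$ with the stable cofiber operation ``$/\eta^k$,'' all of which are standard. If there is a subtle step, it is making sure that the map $\eta^k$ produced at each level $q$ is genuinely induced by the generator $\eta$ via naturality, so that the $\holim$ really assembles into cofibering by $\eta^k$ on $ec_{\overline{X}}(X)$; this is a direct consequence of the naturality of $f^q$ and of $\ReB$.
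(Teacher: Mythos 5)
There is a genuine gap at the heart of your argument: the claimed levelwise equivalence $f^q_{\overline{X}/\eta^k}(X/\eta^k)\simeq f^q_{\overline{X}}(X)/\eta^k$ is false in general. The functor $f^q$ is indeed exact, but it does not commute with the Tate twist: since $\Sigma^{0,1}$ shifts the effective filtration, one has $f^q(\Sigma^{k,k}\overline{X})\simeq \Sigma^{k,k}f^{q-k}(\overline{X})$, \emph{not} $\Sigma^{k,k}f^{q}(\overline{X})$. So applying $f^q$ to the defining triangle of $\overline{X}/\eta^k$ produces the cofiber sequence
$$\Sigma^{k,k}f^{q-k}(\overline{X})\xrightarrow{f^q(\eta^k)} f^q(\overline{X})\to f^q(\overline{X}/\eta^k),$$
whereas $f^q(\overline{X})/\eta^k$ is the cofiber of $\eta^k$ on $\Sigma^{k,k}f^{q}(\overline{X})$; the two differ by the fiber $\overline{Y}^q_k$ of $f^q(\overline{X})\to f^{q-k}(\overline{X})$, a finite extension of the slices $s_{q-k+1},\dots,s_q$, which is generally nonzero. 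A concrete counterexample: take $\overline{X}=\MZ$ and $k=1$. Then $\eta$ acts trivially on $\MZ$, so $f^0(\MZ)/\eta\simeq \MZ\vee\Sigma^{2,1}\MZ$, while $f^0(\MZ/\eta)\simeq f^0(\MZ\vee\Sigma^{2,1}\MZ)\simeq\MZ$ because $\Sigma^{2,1}\MZ$ sits in filtration $1$ and is killed by $f^0$. Your second step (commuting $\holim_q$ with the stable cofiber by $\eta^k$) is fine; it is the first step that breaks.

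The proposition is nevertheless true, but only after passing to the limit, and this is where the actual work lies. The paper's proof compares $f^q(\overline{X})/\eta^k$ and $f^q(\overline{X}/\eta^k)$ by a three-column diagram whose fiber column is $\Sigma^{k+1,k}\overline{Y}^q_k$, realizes everything with $\ReB$, and then observes that in the resulting tower $\{Y^q_k\}_q$ the composite of any $k$ adjacent structure maps is null (each $Y^q_k$ is built from only $k$ slices), so this error tower is pro-trivial and its homotopy limit (and $\lim^1$) vanishes; taking $\holim_q$ of the levelwise cofiber sequences then identifies $ec_{\overline{X}}(X)/\eta^k$ with $ec_{\overline{X}/\eta^k}(X/\eta^k)$. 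To repair your argument you would need to replace the false levelwise identification with exactly this kind of pro-triviality argument for the discrepancy terms.
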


\begin{proof}
	We compare $f^{q}(\overline{X}/\eta^k)$ and $f^{q}(\overline{X})/\eta^k$ by 
	considering the following commutative diagram:
		\begin{equation*}
\xymatrix{
\Sigma^{k,k} \overline{Y_k^q} \ar[d]\ar[r] &  \Sigma^{k,k} f^{q}(\overline{X}) \ar[d]^{\eta^k} \ar[r] & {\Sigma^{k,k}} f^{q-k}(\overline{X})\simeq f^{q}(\Sigma^{k,k}\overline{X})\ar[d]^(.5){f^q(\eta^k)}\\
{*}\ar[d]\ar[r] & f^{q}(\overline{X})\ar[r]^{=}\ar[d]&  f^{q}(\overline{X})\ar[d]\\
\Sigma^{k+1,k} \overline{Y_k^q} \ar[r] &  f^{q}(\overline{X})/\eta^k\ar[r] & f^{q}(\overline{X}/\eta^k)
}.
	\end{equation*}
	The middle and the right column are cofiber sequences that define $f^{q}(\overline{X})/\eta^k$ and $f^{q}(\overline{X}/\eta^k)$. The left column is obtained by taking the fiber in each row. The spectrum $\overline{Y_k^q}$ denotes the fiber of the map $f^{q}(\overline{X})\to f^{q-k}(\overline{X}),$ which can be identified with the limit of the truncated cotower of $\overline{X}$.
	
	For all integers $q$, we apply Betti realization to the bottom row. Since Betti realization preserves cofiber sequences, 
	we get a levelwise cofiber sequence of towers:
			$$\xymatrix{
			\ar[d] &\ar[d] & \ar[d]\\
			\Sigma^{k+1,k}Y_k^{q+1}\ar[d]\ar[r]&f^{q+1}(X)/\eta^k\ar[r]\ar[d]& f^{q+1}(X/\eta^k)\ar[d]\\
\Sigma^{k+1,k}Y_k^q\ar[r]\ar[d]&f^q(X)/\eta^k\ar[r]\ar[d]& f^{q}(X/\eta^k)\ar[d]\\
& & \\
	 }
	$$
	where $Y_k^*$ denotes the image of $\overline{Y_k^*}$ under realization.
	By diagram chasing, the composite of any $k$ adjacent maps in the first column is trivial, i.e. the composite $Y^{q+k}\to Y^{q+k-2}\to \dots \to Y^{q}$ is trivial for all $q.$
On the other hand, the maps in the second and the third columns are from the $C_2$-effective cotowers. 
	Taking homotopy limits, we get the desired isomorphism. 
	
\end{proof}

\begin{prop}
	If $X/\eta$ is effective complete with respect to $\overline{X}/\eta$,  and $ec_{\overline{X}}(X)$ is $\eta$-complete, then $ec_{\overline{X}} (X)\simeq  X^\wedge_\eta.$
	\label{etaconv}
\end{prop}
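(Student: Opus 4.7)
The plan is to bootstrap from the $k=1$ hypothesis to all $k$ by induction, then pass to the $\holim$ over $k$.

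\textbf{Step 1: $X/\eta^k$ is effective complete with respect to $\overline{X}/\eta^k$ for all $k\geq 1$.} The base case $k=1$ is the first hypothesis. For the inductive step, apply the octahedral axiom to the composition $\Sigma^{k,k}\overline{X}\xrightarrow{\eta}\Sigma^{k-1,k-1}\overline{X}\xrightarrow{\eta^{k-1}}\overline{X}$ to obtain an $\bbR$-motivic cofiber sequence
\[
\Sigma^{k-1,k-1}(\overline{X}/\eta)\longrightarrow \overline{X}/\eta^k\longrightarrow \overline{X}/\eta^{k-1}.
\]
Applying Betti realization gives the corresponding $C_2$-equivariant cofiber sequence. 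By the inductive hypothesis the outer terms are effective complete with respect to their motivic lifts, so by the $2$-out-of-$3$ property of Proposition \ref{closelemma}, so is $X/\eta^k$.

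\textbf{Step 2: Identify $X/\eta^k$ with $ec_{\overline{X}}(X)/\eta^k$.} By Step 1, $X/\eta^k\simeq ec_{\overline{X}/\eta^k}(X/\eta^k)$. By Proposition \ref{isoeta},
\[
ec_{\overline{X}/\eta^k}(X/\eta^k)\;\simeq\; ec_{\overline{X}}(X)/\eta^k.
\]
Moreover these equivalences are compatible with the canonical maps $X/\eta^k\to X/\eta^{k-1}$ and $ec_{\overline{X}}(X)/\eta^k\to ec_{\overline{X}}(X)/\eta^{k-1}$, since they arise from the natural map $X\to ec_{\overline{X}}(X)$.

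\textbf{Step 3: Take the $\eta$-adic limit.} Passing to the homotopy limit of the system in $k$, we get
\[
X^{\wedge}_{\eta}\;=\;\holim_k\, X/\eta^k\;\simeq\;\holim_k\, ec_{\overline{X}}(X)/\eta^k\;=\;ec_{\overline{X}}(X)^{\wedge}_{\eta}.
\]
By the second hypothesis, $ec_{\overline{X}}(X)$ is $\eta$-complete, so $ec_{\overline{X}}(X)^{\wedge}_{\eta}\simeq ec_{\overline{X}}(X)$, which yields the desired equivalence $X^{\wedge}_{\eta}\simeq ec_{\overline{X}}(X)$.

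The only genuinely delicate point is bookkeeping in Step 2: one should check that the equivalence $X/\eta^k\simeq ec_{\overline{X}}(X)/\eta^k$ really is induced by the canonical map $X\to ec_{\overline{X}}(X)$, so that the final equivalence is natural and comes from the map $X^{\wedge}_{\eta}\to ec_{\overline{X}}(X)$ factoring through $\eta$-completion. Steps 1 and 3 are formal consequences of the closure property of Proposition \ref{closelemma} and the commutation of homotopy limits with itself, respectively, and should not cause trouble.
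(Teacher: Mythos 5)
Your proof is correct and follows essentially the same route as the paper's: reduce to showing $X/\eta^k\simeq ec_{\overline{X}}(X)/\eta^k$ for all $k$, establish effective completeness of $X/\eta^k$ by induction via the cofiber sequence and the 2-out-of-3 property of Proposition \ref{closelemma}, apply Proposition \ref{isoeta}, and pass to the homotopy limit using $\eta$-completeness of $ec_{\overline{X}}(X)$. Your version is in fact slightly more careful than the paper's, recording the suspension on the first term of the cofiber sequence and flagging the naturality of the comparison maps.
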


\begin{pf}
	By the assumption that $ec_{\overline{X}}(X)$ is $\eta$-complete, it is sufficient to show that $X^\wedge_\eta$ is equivalent to $ ec(X)^\wedge_\eta$. It is further sufficient to show that $X/{\eta^k}\to ec(X)/\eta^k$  is an equivalence for all $k\geq 1$.

	Note that $X/\eta^k$ is effective complete for any $k\geq 1$. In fact, when $k=1$, it is the assumption. For $k\geq 2$, it follows by induction on $k$ from Lemma \ref{closelemma} and the cofiber sequence 
	$$X/\eta\to X/\eta^k\to X/\eta^{k-1}.$$
	
	By Lemma \ref{isoeta}, we have that $ec(X)/\eta^k\cong ec(X/\eta^k).$ Therefore, for any $k\geq 1$, we have
	$$X/{\eta^k}\simeq ec(X/\eta^k)\cong ec(X)/\eta^k.$$
\end{pf}
	
\subsection{\texorpdfstring{$\ko$}{koc2} and the realization of \texorpdfstring{$\kq$}{kq}}\hfill\\

Let $\kq$ denote the very effective cover of the Hermitian $K$-theory spectrum $\KQ$ (\cite{horn2005representability}).
The main example of this paper is the realization of $\kq$. 

\begin{defn}
	Let $\kokq$ denote the $C_2$-equivariant Betti realization of $\kq$.
\end{defn}

The spectrum $\kokq$ is closely related to $\ko,$ the connective cover of the $C_2$-equivariant $K$-theory spectrum $\KO$ (\cite{GHIR}). In this section, we discuss their connection.

In the classical stable homotopy category, we have the Wood cofiber sequence:
$$\Sigma^1\mathbf{ko}\xrightarrow{\eta} \mathbf{ko} \to \mathbf{ku}.$$

Its $C_2$-equivariant analogue is the cofiber sequence (\cite{GHIR}):
\begin{equation}
\label{equa:eqwood}
	\Sigma^{1,1}\ko\xrightarrow{\eta} \ko \to \kR,
\end{equation}
where $\kR$ is the connective cover of the Atiyah's Real $K$-theory spectrum $\KR$.

Similarly, in the motivic stable homotopy category, we also have a Wood cofiber sequence (see \cite{ananyevskiy2019hermitiank} ):
\begin{equation}
\label{equa:motwood}
	\Sigma^{1,1}\kq\xrightarrow{\eta} \kq \to \kgl,
\end{equation}

where $\kgl$ is the very effective cover of the motivic algebraic $K$-theory spectrum $\KGL$.

The question is whether the $C_2$-equivariant realization takes the motivic Wood cofiber sequence to the $C_2$-equivariant one. We show this is true for the periodic case.
\begin{prop}
The $C_2$-equivariant Betti realization of the periodic $\bbR$-motivic Wood cofiber sequence $\Sigma^{1,1}\KQ\xrightarrow{\eta} \KQ \to \KGL$ is the periodic $C_2$-equivariant Wood cofiber sequence 
$\Sigma^{1,1}\KO\xrightarrow{\eta} \KO \to \KR$.
\label{prop:woodcofsequences}
\end{prop}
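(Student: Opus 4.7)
The plan is to apply the $C_2$-equivariant Betti realization functor $\ReB$ termwise to the motivic Wood cofiber sequence and then identify each of the three resulting spectra. By Proposition \ref{prop:betti}, $\ReB$ is a strong symmetric monoidal left Quillen functor; in particular, it preserves cofiber sequences and, since $\ReB(S^{1,1}) \simeq S^{1,1}$, it commutes with the shift $\Sigma^{1,1}$. Thus applying $\ReB$ to the $\bbR$-motivic sequence
\[
\Sigma^{1,1}\KQ \xrightarrow{\eta} \KQ \to \KGL
\]
produces a $C_2$-equivariant cofiber sequence
\[
\Sigma^{1,1}\ReB(\KQ) \xrightarrow{\ReB(\eta)} \ReB(\KQ) \to \ReB(\KGL).
\]

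To match this with the $C_2$-equivariant Wood cofiber sequence, three identifications are required: (i) $\ReB(\KQ) \simeq \KO$; (ii) $\ReB(\KGL) \simeq \KR$; and (iii) $\ReB(\eta) = \eta$. Item (iii) follows immediately from the explicit geometric descriptions given in the remark after Definition of $\eta$: the motivic Hopf map arises from the projection $\bbA^2 - 0 \to \bbP^1$, whose realization over $\bbR$, equipped with the complex-conjugation $C_2$-action, is precisely the $C_2$-equivariant Hopf map $\bbC^2 - 0 \to \bbC\bbP^1$, i.e., $S^{3,2} \to S^{2,1}$ in our bigrading. Item (ii) is classical: $\KGL$ represents algebraic $K$-theory, whose complex Betti realization is $\KU$, and the $C_2$-action induced by complex conjugation on schemes defined over $\bbR$ is exactly Atiyah's Real structure, so the $C_2$-equivariant realization is $\KR$.

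The main content is item (i). The standard approach is to cite the description of $\KQ$ as the spectrum representing Hermitian $K$-theory and invoke the known identification of its $C_2$-equivariant Betti realization with $\KO$ (see, e.g., the work of Heller--Ormsby and subsequent work on Betti realization of Hermitian $K$-theory over $\bbR$). Alternatively, one can first pass to underlying spectra via the diagram \eqref{magicsquare}: by Diagram \eqref{magicsquare}, $U \circ \ReB(\KQ) \simeq \ReG \circ i^* (\KQ)$, and $i^*(\KQ)$ over $\bbC$ realizes to the classical real $K$-theory spectrum $\mathbf{KO}$, which agrees with the underlying spectrum of $\KO$; then one promotes this to a $C_2$-equivariant equivalence using the compatibility of the Real structures.

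The main obstacle is item (i): unlike (ii) and (iii), this identification is not purely formal but uses the geometric interpretation of Hermitian $K$-theory under complex realization and its compatibility with the Real structure. Once all three identifications are assembled, the target cofiber sequence is precisely \eqref{equa:eqwood} in its periodic form, proving the proposition.
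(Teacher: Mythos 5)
Your formal reductions agree with the paper's: $\ReB$ preserves cofiber sequences, commutes with $\Sigma^{1,1}$, and sends the motivic $\eta$ to the equivariant $\eta$, so everything comes down to identifying the realized spectra. (Incidentally, your item (ii) is not needed as an independent input: once $\ReB(\KQ)\simeq\KO$ and $\ReB(\eta)=\eta$ are known, the realized cofiber is forced to be $\KR$ by the equivariant Wood sequence; the paper in fact records $\ReB(\KGL)\simeq\KR$ as a consequence of this proposition rather than as an ingredient.)

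The genuine gap is your item (i), which you correctly flag as the main content but do not actually establish. Citing ``the known identification of the Betti realization of Hermitian $K$-theory with $\KO$'' is circular here --- that identification is precisely what must be proved --- and Heller--Ormsby's results quoted in the paper (Proposition \ref{prop:betti}) only cover $\MZ$, $\MZt$ and spheres, not $\KQ$. Your fallback argument also does not work as stated: knowing that the underlying spectrum of $\ReB(\KQ)$ is $\mathbf{KO}$ and that both sides carry ``Real structures'' does not promote an underlying equivalence to a $C_2$-equivariant one; one needs an actual equivariant map or a natural comparison of the represented equivariant cohomology theories. The paper supplies exactly this missing input: by a theorem of Karoubi (Theorem 2.2 of the cited work on real Witt groups) there is a natural isomorphism between the functors $\ReB(\KQ)_0(-)$ and $\KO_0(-)$; using periodicity this extends to a natural isomorphism of the full periodic cohomology theories, and hence to an equivalence of the representing $C_2$-spectra. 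Without a precise statement of this kind, step (i) --- and with it the proposition --- remains unproven in your sketch.
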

\begin{pf}
	Betti realization preserves cofiber sequences and takes the $\bbR$-motivic element $\eta$ to the $C_2$-equivariant element $\eta$. Therefore it is sufficient to show that $\ReB(\KQ)\simeq \KO.$
	
	By \cite[Theorem 2.2]{karoubi2016realwitt}, there is a natural isomorphism $\ReB(\KQ)_0(-)$ and $(\KO)_0(-)$. This extends to an isomorphism between the periodic cohomology theories which gives an equivalence between the representing spectra.
\end{pf}

\begin{rmk}
	It follows by the proof of Proposition \ref{prop:woodcofsequences} that $\ReB(\KGL)\simeq \KR$. Another proof can be found in \cite[Example 5.17]{heard2018equivariant}.
\end{rmk}

In general, the very effective cover of a motivic spectrum $X$ does not necessarily realize to the $C_2$-equivariant connective cover of its $C_2$-equivariant Betti realization $\ReB{X}.$ Therefore, we cannot deduce directly from the periodic case that the motivic cofiber sequence (\ref{equa:motwood}) realizes to the equivariant cofiber sequence (\ref{equa:eqwood}). In fact, though we know that the realization $\ReB(\kgl)$ is equivalent to $\kR$ by \cite[Theorem 5.15, Example 5.17]{heard2018equivariant}, it is not known yet whether the realization $\ReB(\kq)$ is equivalent to $\ko$. However, we have a map
$\phi:\kokq\to \ko$ by applying the connective cover functor to the equivalence $\ReB{\KQ}\xrightarrow{\simeq} \KO$ and using the universal property. We show that this map $\phi$ is an equivalence after $\eta$-completion.

\begin{prop}
\label{prop:redkqiskoceta}
The map $\phi$ induces an equivalence 
$$(\kokq)^\wedge_\eta \simeq (\ko)^\wedge_\eta $$	
of $C_2$-equivariant spectra after $\eta$-completion.
\end{prop}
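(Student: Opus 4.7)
The plan is to show that the cofiber of $\phi$ vanishes modulo $\eta$, which via a limit argument forces its $\eta$-completion to vanish and hence makes $\phi$ an equivalence after $\eta$-completion. The main work is to identify both $\kokq/\eta$ and $\ko/\eta$ with $\kR$ via the two Wood cofiber sequences, and then to argue that under these identifications $\phi/\eta$ becomes an equivalence of connective covers of $\KR$.

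To carry out the reduction, let $C$ be the cofiber of $\phi$. Since $\eta$-completion preserves cofiber sequences, it suffices to show $C^\wedge_\eta \simeq 0$. If we can prove $C/\eta \simeq 0$, then $\eta \colon \Sigma^{1,1} C \to C$ is an equivalence, so $C/\eta^k \simeq 0$ for every $k \geq 1$, and therefore $C^\wedge_\eta = \holim_k C/\eta^k \simeq 0$. Since $C/\eta$ is the cofiber of $\phi/\eta$, the claim reduces to showing that $\phi/\eta$ is an equivalence.

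To identify the two sides, I would apply $\ReB$ to the motivic Wood cofiber sequence \eqref{equa:motwood}; since $\ReB$ preserves cofiber sequences and sends $\eta$ to $\eta$, this gives $\kokq/\eta \simeq \ReB(\kgl)$. Combined with Heard's identification $\ReB(\kgl) \simeq \kR$ (\cite[Example 5.17]{heard2018equivariant}, also noted after Proposition \ref{prop:woodcofsequences}), we obtain $\kokq/\eta \simeq \kR$. The equivariant Wood cofiber sequence \eqref{equa:eqwood} yields $\ko/\eta \simeq \kR$ directly. Then I would assemble the commutative square
\begin{equation*}
\begin{tikzcd}
\kokq/\eta \ar[r,"\phi/\eta"]\ar[d] & \ko/\eta \ar[d] \\
\ReB(\KQ)/\eta \ar[r,"\simeq"] & \KO/\eta,
\end{tikzcd}
\end{equation*}
whose bottom row is the quotient by $\eta$ of the equivalence $\ReB(\KQ) \simeq \KO$ from Proposition \ref{prop:woodcofsequences}, and whose verticals are obtained by quotienting the maps $\ReB(\kq) \to \ReB(\KQ)$ and $\ko \to \KO$ that are used to build $\phi$. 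Under the Wood identifications, this square becomes a square in which the bottom is an equivalence $\KR \simeq \KR$ and both verticals are the canonical connective cover map $\kR \to \KR$ (on the left via Heard, on the right by definition of $\kR$). The universal property of the connective cover then forces $\phi/\eta$ to be an equivalence.

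The main obstacle is verifying the commutativity of this comparison square, and in particular checking that under Heard's identification $\ReB(\kgl) \simeq \kR$ the realized map $\ReB(\kgl) \to \ReB(\KGL) \simeq \KR$ genuinely agrees with the canonical connective cover. This is a matter of unpacking the universal properties that define $\phi$ and Heard's equivalence; once that compatibility is established, the rest of the argument is a formal consequence of uniqueness of connective covers together with the bookkeeping in the reduction step.
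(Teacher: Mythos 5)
Your argument is correct, and its geometric core coincides with the paper's: both proofs compare the realized motivic Wood cofiber sequence for $\kokq$ with the equivariant Wood cofiber sequence for $\ko$ via $\phi$, with the essential input being that the induced map on the cofibers of $\eta$ is an equivalence $\kR \to \kR$. Where you differ is in the formal step and in the distribution of effort. The paper passes from ``equivalence mod $\eta$'' to ``equivalence after $\eta$-completion'' by observing that the two rows give isomorphic convergent $\eta$-Bockstein spectral sequences; you instead take the cofiber $C$ of $\phi$ and note that $C/\eta\simeq 0$ makes $\eta$ invertible on $C$, hence $C/\eta^k\simeq 0$ for all $k$ and $C^\wedge_\eta\simeq 0$ --- a cleaner finish that avoids any discussion of spectral sequence convergence. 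Conversely, you spend your effort justifying exactly the step the paper leaves implicit (the unlabeled equivalence on the right of its diagram), and your mechanism --- identifying both vertical maps with connective covers of $\KR$ and invoking the universal property --- is the right one. The ``obstacle'' you flag is smaller than you suggest: commutativity of your comparison square is automatic from the construction of $\phi$, which is defined precisely as the factorization of $\kokq\to\ReB(\KQ)\simeq\KO$ through the connective cover $\ko\to\KO$; and the compatibility of $\ReB(\kgl)\to\ReB(\KGL)\simeq\KR$ with the connective cover $\kR\to\KR$ is exactly what \cite[Theorem 5.15, Example 5.17]{heard2018equivariant} supplies --- the same citation the paper uses later when identifying $\kokq/\eta\simeq\kR$ in the convergence argument. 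One small point to record on the other side of your square: you also need the connective equivariant Wood sequence of \cite{GHIR} to map compatibly to the periodic one, so that $\ko/\eta\to\KO/\eta$ is the connective cover map; this is part of how that sequence is constructed.
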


\begin{pf}

In general, if we have a cofiber sequence of $C_2$-equivariant spectra $\Sigma^{1,1}A\xrightarrow{\eta} A\to B$,
the exact couple
$$
\begin{tikzcd}
\pi_{*-1,*-1}A \ar[rr,"\eta"]& & \pi_{*,*}A\ar[ld]\\	
& \pi_{*,*} B\ar[lu] & 
\end{tikzcd}
$$
gives a convergent $\eta$-Bockstein spectral sequence of the form:
$$\pi_{*,*}B [\eta] \implies \pi_{*,*}A^\wedge_\eta.$$

Consider the commutative diagram:
$$
\begin{tikzcd}
\Sigma^{1,1}\kokq\ar[d,"\Sigma^{1,1}\phi"]\ar[r,"\eta"] & \kokq\ar[d,"\phi"]\ar[r] & \kR\ar[d,"\simeq"] \\	
\Sigma^{1,1}\ko\ar[r,"\eta"] & \ko\ar[r] & \kR
\end{tikzcd},
$$
where the top level is the realization of the motivic Wood cofiber sequence and the bottom level is the $C_2$-equivariant Wood cofiber sequence. 
The top and the bottom levels give two isomorphic $\eta$-Bockstein spectral sequences. Therefore, the map $\phi$ induces a $\pi_{*,*}$ equivalence. The result follows.
\end{pf}

	\begin{rmk}
	\label{rmk:2etacpltfirst}
	In general, for a $C_2$-equivariant spectrum $X$, its $2$-completion $X^\wedge_2$ is equivalent to its $\HZt$-nilpotent completion (\cite[Corollary 6.47]{HK}). By Proposition \ref{slicearecplt}, $\HZt$ is $\eta$-complete. Therefore, the $2$-completed spectrum $X^\wedge_2$ is already $\eta$-complete.
	\end{rmk}

\begin{cor}
\label{prop:redkqiskoc2}
The map $\phi$ induces an equivalence 
$$(\kokq)^\wedge_2 \simeq (\ko)^\wedge_2 $$	
of $C_2$-equivariant spectra after $2$-completion.
\end{cor}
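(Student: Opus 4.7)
The plan is to deduce Corollary \ref{prop:redkqiskoc2} as a direct formal consequence of Proposition \ref{prop:redkqiskoceta} combined with Remark \ref{rmk:2etacpltfirst}. First I would record the general principle that for any $C_2$-equivariant spectrum $X$, the canonical map $X \to X^\wedge_2$ factors through $X \to X^\wedge_\eta$, inducing a natural equivalence $X^\wedge_2 \simeq (X^\wedge_\eta)^\wedge_2$. The reason is that $X^\wedge_2$ is itself $\eta$-complete by Remark \ref{rmk:2etacpltfirst}, so the universal property of $\eta$-completion produces the factorization, and $2$-completing an $\eta$-complete spectrum again gives back the same spectrum.

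With that observation in place, I would apply the $2$-completion functor to the equivalence $\phi^\wedge_\eta : (\kokq)^\wedge_\eta \xrightarrow{\simeq} (\ko)^\wedge_\eta$ of Proposition \ref{prop:redkqiskoceta}, and identify the source and target using the factorization just described. This produces the chain
$$
(\kokq)^\wedge_2 \;\simeq\; \left( (\kokq)^\wedge_\eta \right)^\wedge_2 \;\xrightarrow{\ \simeq\ }\; \left( (\ko)^\wedge_\eta \right)^\wedge_2 \;\simeq\; (\ko)^\wedge_2,
$$
in which the middle arrow is an equivalence because it is the $2$-completion of an equivalence, and the outer two equivalences come from the general principle above. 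One then verifies that the total composite is the $2$-completion of $\phi$ itself, by naturality of the unit $X \to X^\wedge_\eta$.

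I do not anticipate any serious obstacle: the content is entirely packaged into Proposition \ref{prop:redkqiskoceta} and Remark \ref{rmk:2etacpltfirst}, and the proof amounts to invoking them in sequence. The only point requiring mild care is making the naturality of $(-)^\wedge_2$ with respect to the $\eta$-completion unit explicit, so that the composite in the displayed chain is really identified with $\phi^\wedge_2$ and not merely with some abstract equivalence between $(\kokq)^\wedge_2$ and $(\ko)^\wedge_2$.
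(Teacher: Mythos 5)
Your proposal is correct and is essentially the paper's own argument: the paper's proof also two-completes the equivalence $\phi^\wedge_\eta$ of Proposition \ref{prop:redkqiskoceta} and uses the identification $(\phi^\wedge_\eta)^\wedge_2 \simeq \phi^\wedge_2$, which rests on exactly the observation you make via Remark \ref{rmk:2etacpltfirst} that $2$-complete spectra are already $\eta$-complete. Your write-up just spells out the naturality details that the paper leaves implicit.
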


\begin{proof}
	Since $\phi^\wedge_\eta$ is an equivalence,  the $2$-completion $(\phi^\wedge_\eta)^\wedge_2\simeq \phi^\wedge_2$ is also an equivalence.
\end{proof}


\subsection{Motivic effective slices of \texorpdfstring{$\kq$}{kq} and the \texorpdfstring{$C_2$}{C2}-effective slices of \texorpdfstring{$\kokq$}{kokq}}\hfill\\

In this section, we apply the general theory of $C_2$-effective filtrations to the $C_2$-equivariant spectrum $\kokq.$ The construction of the $C_2$-effective spectral sequence requires some understanding of the motivic effective slice tower of $\kq.$ Work by R{\"o}ndigs and {\O}stv{\ae}r (\cite{rondigs2016slices}, \cite{rondigs2016multiplicative}) gives the motivic effective slices of $\KQ$, the multiplicative relation and the differentials. The results for $\KQ$ translate to $\kq$. Later work by R{\"o}ndigs, Spitzweck and {\O}stv{\ae}r (\cite{rondigs2018hopfmap}) showed the effective spectral sequence converges strongly:
$$\pi_{*,*}s_*(\kq)\Rightarrow \pi_{*,*}({\kq}^{\wedge}_\eta).$$ 

We summarize the known results for $\kq.$

 The following formula quoted from \cite{ananyevskiy2019hermitiank} describes the motivic effective slices of $\kq$.
\begin{equation*}
    s_*(\kq)\simeq  \MZ[\eta,\sqrt\alpha]/(2\eta, \eta^2 \xrightarrow{\delta}\sqrt\alpha).
\end{equation*}

We explain this expression.

Additively, the slices of $\kq$ are wedge sums of suspensions of Eilenberg-Maclane spectra. All degrees can be described at once as indexed over the polynomial algebra $\bbZ[\eta,\sqrt\alpha]/2\eta$.
	
The generator $\eta$ represents a summand $\Sigma^{1,1}\MZt$ in $s_1(\kq)$ and $\sqrt\alpha$ represents a summand $\Sigma^{4,2}\MZ$ in $s_2(\kq)$. In general, $\eta^p(\sqrt\alpha)^q$ contributes a summand $\Sigma^{p+4q,p+2q}\MZt$ to $s_{p+2q}(\kq)$ when $p>0,$ and  $(\sqrt\alpha)^q$ contributes a summand $\Sigma^{4q,2q}\MZ$ to $s_{2q}(\kq).$ 
\begin{eg}
\label{eg:slices}
 We give three examples of motivic effective slices of $\kq$.
 \begin{enumerate}
 	\item The zeroth slice $s_0(\kq)$ is $\MZ$ and is indexed by $1$.
 	\item The first slice $s_1(\kq)$ is $\Sigma^{1,1}\MZt$ and is indexed by $\eta$.
 	\item The second slice $s_2(\kq)$ is $\Sigma^{2,2}\MZt\vee \Sigma^{4,2}\MZ$. The first wedge summand is indexed by $\eta^2$ and the second by $\sqrt\alpha.$
 \end{enumerate}	
\end{eg}

However, the polynomial algebra $\bbZ[\eta,\sqrt\alpha]/2\eta$ does not accurately describe the multiplicative structure. There is an extra multiplicative relation which is represented by the map $\eta^2 \xrightarrow{\delta}\sqrt\alpha$ in the notation of Ananyevskiy, R\"ondigs and \O stv\ae r. Conceptually, this map describes a relation between the wedge summands indexed by $\eta^2$ and by $\sqrt\alpha$ in $s_2(\kq)$. We explain what this map means. 


Consider the multiplication map $s_1(\kq)\wedge s_1(\kq)\to s_2(\kq).$ Using the descriptions of the first and the second slices in Example \ref{eg:slices}, this is a map 
\begin{equation}
\label{multi}
\mu:\Sigma^{1,1} \MZt \wedge \Sigma^{1,1} \MZt \simeq s_1 \kq \wedge s_1 \kq \to s_2 \kq \simeq \Sigma^{2,2} \MZt \vee \Sigma^{4,2} \MZ.
\end{equation}

By \cite{hoyois2017steenrod}, as an $\MZt$-module, the source of the map (\ref{multi}) decomposes into a wedge sum of suspensions of $\MZt$. This wedge sum is indexed over the set of degrees of additive generators in the motivic Steenrod algebra. We denote this indexing set by $I$. 

Explicitly, we have a decomposition of the source:
\begin{equation}
\Sigma^{1,1}\MZt \wedge \Sigma^{1,1}\MZt\simeq \Sigma^{2,2}\MZt \vee \Sigma^{3,2}\MZt \vee_{(i,j)\in I\symbol{92}\{(0,0),(1,0)\}} \Sigma^{i+2,j+2}\MZt.
\label{decomposition}
\end{equation}

In the above expression, we pick out the wedge summands indexed by $(0,0)$ and $(1,0)$. This is because that under the decomposition (\ref{decomposition}), the map (\ref{multi}) is trivial for degree reasons on all other wedge summands except these two. When restricted to the summand indexed by $(0,0)$, the map $\mu|_{(0,0)}: \Sigma^{2,2}\MZt\to \Sigma^{2,2} \MZt \vee \Sigma^{4,2} \MZ$ is $(\id,0).$
When restricted to the summand indexed by $(1,0)$, the map $\mu|_{(1,0)}: \Sigma^{3,2}\MZt\to \Sigma^{2,2} \MZt \vee \Sigma^{4,2} \MZ$ is $(0, \Sigma^{3,2}\delta)$. Here $\delta$ denotes the natural Bockstein connecting map $\MZt\to \Sigma^{1,0}\MZ$.\\

Applying the $C_2$-equivariant Betti realization functor to the motivic effective slices of $\kq$, we have the following description of the $C_2$-effective slices of $\kokq$:
\begin{equation}
 \label{koc2e1spec}
    s_*(\kokq) \simeq \HZ[\eta,\sqrt\alpha]/(2\eta, \eta^2\xrightarrow{\delta}\sqrt\alpha).
\end{equation}

The expression (\ref{koc2e1spec}) has the same meaning as in the motivic case. The element $\eta$ indexes a wedge summand of suspension of $\HZt$ and the element $\sqrt\alpha$ indexes a wedge summand of suspension of $\HZ.$ The map $\delta$ describes the multiplicative structure in $s_1(\kokq)\wedge s_1(\kokq)\to s_2(\kokq).$


\subsection{The effective completion of \texorpdfstring{$\kokq$}{koc2}}\hfill\\

In this section, we show that the $C_2$-effective spectral sequence for $\kokq$ actually converges to the homotopy of the $\eta$-completion of $\kokq$. We fix the preferred choice of the Betti realization preimage to be $\kq.$


\begin{thm}
	The effective completion of $\kokq$ is equivalent to the $\eta$-completion of $\kokq$, i.e.
	$$(\kokq)_{\eta}^{\wedge}\simeq ec_{\kq}(\kokq).$$
\end{thm}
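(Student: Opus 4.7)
The plan is to apply Proposition \ref{etaconv} to $X = \kokq$ with preferred preimage $\overline{X} = \kq$; the conclusion of that proposition is precisely the statement of the theorem. I therefore need to verify its two hypotheses: (a) $ec_{\kq}(\kokq)$ is $\eta$-complete, and (b) $\kokq/\eta$ is effective complete with respect to $\kq/\eta$.

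For (a), I would invoke Proposition \ref{etacplt} with $n = 0$. By definition $\kq$ is the very effective cover of $\KQ$, so $\kq$ lies in $\Sigma^{0,0}\SHeff(\bbR)$ and hence $f_0(\kq) \simeq \kq$. Applying Betti realization gives $f_0^{\kq}(\kokq) = \ReB(f_0\kq) \simeq \kokq$, which is exactly the hypothesis of Proposition \ref{etacplt}. This delivers (a).

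For (b), Betti realizing the motivic Wood cofiber sequence (\ref{equa:motwood}) identifies $\kq/\eta \simeq \kgl$ and $\kokq/\eta \simeq \ReB(\kgl) \simeq \kR$, where the last equivalence is recorded in the remark following Proposition \ref{prop:woodcofsequences}. Effective completeness of $\kR$ with respect to $\kgl$ then has two components: (b1) that $\kgl$ is motivically slice complete in $\SH(\bbR)$, and (b2) that the canonical map $\kR \to \holim_q \ReB(f^q\kgl) = ec_{\kgl}(\kR)$ is an equivalence. Claim (b1) is classical: the slices $s_q(\kgl) \simeq \Sigma^{2q,q}\MZ$ (which vanish for $q < 0$) have increasing topological connectivity, so the motivic slice spectral sequence for $\kgl$ converges strongly.

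The main obstacle is (b2), since Betti realization is only a left adjoint and in general need not preserve homotopy limits. I would exploit the Bott-periodic structure of $\kgl$: the effective covers satisfy $f_q(\kgl) \simeq \Sigma^{2q,q}\kgl$ via multiplication by $\beta^q$, where $\beta \in \pi_{2,1}(\kgl)$ is the Bott element, so the cotower identifies as $f^q(\kgl) \simeq \kgl/\beta^{q+1}$. Realization gives $\ReB(f^q\kgl) \simeq \kR/\beta^{q+1}$, whence $ec_{\kgl}(\kR) \simeq \kR^{\wedge}_{\beta}$. The equivalence $\kR \simeq \kR^{\wedge}_{\beta}$ then reduces to $\beta$-completeness of the connective $C_2$-equivariant spectrum $\kR$, which follows from connectivity estimates on the cofiber $\kR/\beta \simeq \HZ$ in the RO$(C_2)$-graded sense. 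With (a) and (b) both verified, Proposition \ref{etaconv} produces the desired equivalence $(\kokq)_{\eta}^{\wedge} \simeq ec_{\kq}(\kokq)$.
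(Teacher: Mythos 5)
Your proposal is correct, and its overall skeleton (verify the two hypotheses of Proposition \ref{etaconv}, with $\eta$-completeness of $ec_{\kq}(\kokq)$ coming from Proposition \ref{etacplt} via $f_0(\kq)\simeq\kq$, and with $\kokq/\eta$ identified with $\kR$ through the realized Wood sequence (\ref{equa:motwood})) is exactly the paper's. Where you genuinely diverge is in how effective completeness of $\kR$ with respect to $\kgl$ is established. The paper cites Heard's comparison theorem \cite[Theorem 5.15, Example 5.17]{heard2018equivariant}, which says the motivic effective slice tower of $\kgl$ realizes to the $C_2$-equivariant slice tower of $\kR$, and then invokes Dugger's result \cite{dugger1999postnikov} that the homotopy limit of that tower is contractible; dually the cotower converges to $\kR$. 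You instead make the towers explicit: using the Bott-periodic identification $f_q(\kgl)\simeq\Sigma^{2q,q}\kgl$ you write $f^q(\kgl)\simeq\kgl/\beta^{q+1}$, realize to $\kR/\beta^{q+1}$, and prove $\kR\simeq\kR^{\wedge}_{\beta}$ by an RO$(C_2)$-graded connectivity estimate; you also verify motivic slice completeness of $\kgl$ (condition (2) of the definition), which the paper leaves implicit. Both routes work: yours is more self-contained and avoids Heard's theorem, at the cost of invoking the computation of the effective covers of $\KGL$ and of carrying out the equivariant connectivity argument, whereas the paper's is shorter by outsourcing convergence to the literature. Two small points of care in your write-up: the equivalence $\ReB(\kgl)\simeq\kR$ comes from Heard's theorem (the remark after Proposition \ref{prop:woodcofsequences} only records the periodic case $\ReB(\KGL)\simeq\KR$); and the cleanest form of your completeness argument estimates the fibers $\Sigma^{(q+1)(2,1)}\kR$ of $\kR\to\kR/\beta^{q+1}$ directly --- connectivity of $\kR$ plus suspension by actual representations makes their homotopy vanish in any fixed bidegree for large $q$ --- rather than the cofiber $\kR/\beta\simeq\HZ$, and it does not actually require identifying $\ReB(\beta)$ with the standard Bott class.
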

\begin{pf}
	By Proposition \ref{etacplt}, we have that $ec(\kokq)$ is $\eta$-complete. Therefore by Proposition \ref{etaconv}, it is sufficient to show that $\kokq/\eta$ is effective complete. By \cite{ananyevskiy2019hermitiank}, we have a cofiber sequence 
	$$\Sigma^{1,1}\kq\xrightarrow{\eta} \kq \to \kgl.$$ Taking Betti realization, we identify the cofiber $\kokq/\eta$ with $\ReB(\kgl)\simeq\kR.$ By \cite[Theorem 5.15, Example 5.17]{heard2018equivariant}, the effective slice tower of $\kgl$ realizes to the $C_2$-equivariant slice tower of $\kR$ whose homotopy limit is contractible(\cite{dugger1999postnikov}). Dually, the homotopy limit of the $C_2$-effective cotower is equivalent to $\kR$, i.e. the spectrum $\kR\simeq \kokq/\eta$ is effective complete. 	
\end{pf}

As a result, we have the following:


\begin{thm}
\label{thm:koconverge}
	The $C_2$-effective spectral sequence of $\kokq$ has the following form, and it converges strongly in the sense of \cite{boardman1999convergence} to $\pi_{s,w}{(\kokq)^\wedge_\eta}$:
	$$\pi_{s,w}(\HZ[\eta,\sqrt\alpha]/({2\eta, \eta^2\xrightarrow{\delta}\sqrt\alpha}))\Rightarrow \pi_{s,w}{(\kokq)^\wedge_\eta}$$
\end{thm}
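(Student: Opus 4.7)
The plan is to assemble three ingredients, each of which has either been proved or is immediate from earlier in the section. First, I would write down the tower of $C_2$-effective cocovers for $\kokq$ and take homotopy groups, producing the standard spectral sequence associated to a tower of fibrations. By construction, the $E_1$-term in filtration $q$ and bidegree $(s,w)$ is $\pi_{s,w}^{C_2}(s_q(\kokq))$. The explicit description of the slices in equation (\ref{koc2e1spec}) identifies this group with the corresponding graded piece of $\pi_{*,*}(\HZ[\eta,\sqrt\alpha]/(2\eta,\eta^2\xrightarrow{\delta}\sqrt\alpha))$, giving the stated $E_1$-page.

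Next, I would identify the target. The abutment of the spectral sequence is by construction $\pi_{*,*}^{C_2}(ec_{\kq}(\kokq))$, and the preceding theorem establishes $ec_{\kq}(\kokq)\simeq (\kokq)^\wedge_\eta$. Combining, the spectral sequence takes the asserted form and abuts to $\pi_{s,w}^{C_2}((\kokq)^\wedge_\eta)$.

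The heart of the proof is upgrading abutment to strong convergence in Boardman's sense. The tower gives conditional convergence automatically, so it suffices to verify $RE_\infty=0$. I would do this by exhibiting a vanishing line: each slice $s_q(\kokq)$ is a finite wedge of suspensions $\Sigma^{p+4q,p+2q}\HZ$ and $\Sigma^{p+4q,p+2q}\HZt$ (for $p\geq 0$ controlled by the monomial degree in $\eta$), and the connectivity of $\HZ$ and $\HZt$ in the RO($C_2$)-grading, as recorded in Section \ref{EM}, forces $\pi_{s,w}^{C_2}(s_q(\kokq))=0$ once $q$ is large enough relative to $(s,w)$. Hence in each fixed tridegree only finitely many filtration levels contribute, $E_r$ stabilizes for $r\gg 0$ in each tridegree, and the lim${}^1$ terms governing $RE_\infty$ vanish. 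Combined with conditional convergence, this yields strong convergence.

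The main obstacle I anticipate is making the vanishing line rigorous in the RO($C_2$)-graded setting. Classical connectivity arguments carry over for the integer-graded directions, but the $w$-direction involves $\pi_{*,*}^{C_2}(\HZ)$ and $\pi_{*,*}^{C_2}(\HZt)$ which have nontrivial negative-cone contributions (the $\tau^{-1}$ and related patterns recorded in Section \ref{EM}). I would therefore invoke the explicit description of these Mackey functors computed in that section to show that in any fixed bidegree $(s,w)$ the contribution of the shifted Eilenberg--MacLane summands appearing in $s_q(\kokq)$ is zero for $q$ above an explicit linear bound, which suffices to conclude $RE_\infty=0$ and hence strong convergence.
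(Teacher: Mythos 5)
Your setup of the spectral sequence, the identification of the $E_1$-page from the slice description (\ref{koc2e1spec}), and the identification of the abutment via the equivalence $ec_{\kq}(\kokq)\simeq(\kokq)^\wedge_\eta$ all match what the paper does. The gap is in your argument for strong convergence: the vanishing line you propose does not exist. In a fixed tridegree there are infinitely many filtrations contributing to the $E_1$-page. For example, take $(s,w)=(0,0)$: the summand $\Sigma^{q,q}\HZt$ of $s_q(\kokq)$ indexed by $\eta^q$ contributes $(\HZt)_{-q,-q}=\bbF_2\{\rho^q\}$, which is nonzero for every $q\geq 1$; these are the infinite $h_1$-towers $\rho^q h_1^q$ visible in the charts, and they even survive to $E_\infty$ (they detect the powers of $\rho\eta$, which is why the class $2$ in filtration $0$ detects infinitely many homotopy elements). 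The point is that the connectivity of $\HZ$ and $\HZt$ only bounds the contributions coming from the region $s\geq 0$, $w\geq s+2$ of $(\HZt)_{*,*}$; the positive cone $\bbF_2[\tau,\rho]$, suspended by $(q,q)$ in the $\eta^q$-summand, lands in any fixed bidegree of nonpositive coweight shift for arbitrarily large $q$. Indeed, if your claimed vanishing line held, the spectral sequence would converge to $\pi_{*,*}(\kokq)$ itself with no completion; the infinitely many filtration contributions in a fixed bidegree are exactly the mechanism that produces the $\eta$-completion in the abutment. So your derivation of $RE_\infty=0$, and with it the claim that ``only finitely many filtration levels contribute,'' fails.

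The conclusion you want is still true, but for a different reason, and this is how the paper argues: one shows that the spectral sequence degenerates after the $E_1$-page (the $d_1$-differentials are computed and all higher differentials are excluded by the $\rho$-divisibility and degree arguments of Section \ref{diff}). Once $E_2=E_\infty$, the terms $E_r$ are constant in $r$ in every tridegree, so the relevant $\lim^1$ over $r$ vanishes; combined with conditional convergence of the tower spectral sequence this gives strong convergence to $\pi_{*,*}$ of the homotopy limit of the cotower (this is the complete convergence lemma for towers cited by the paper from \cite{goerss2009simplicial}, equivalently Boardman's criterion ``conditional convergence plus $RE_\infty=0$''), and the homotopy limit is $(\kokq)^\wedge_\eta$ by the preceding theorem. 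So to repair your proof, replace the vanishing-line step by the collapse result; without some control on higher differentials there is no way to get $RE_\infty=0$ from connectivity alone in this RO($C_2$)-graded setting.
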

\begin{pf}
	It is shown in later sections that the spectral sequence collapses at $E_2$. By the general theory of spectral sequence convergence\cite[Section 12, 6.21]{goerss2009simplicial}, the spectral sequence converges to the homotopy groups of the homotopy limit of the tower.
\end{pf}

\begin{rmk}
	By Proposition \ref{prop:redkqiskoceta}, this spectral sequence also computes the RO($G$)-graded homotopy groups of the $\eta$-completion of $\ko$.
\end{rmk}








\section{Equivariant cohomology of a point}
\label{EM}

Similar to the motivic effective slice spectral sequence, the $C_2$-effective spectral sequence makes sense integrally and no completion is needed. However, in the case of $\kokq$, it turns out that the odd primary behavior of $\kokq$ is essentially the same as for ordinary $\mathbf{ko}$.
Therefore in this paper, we exclude the odd primes by completing at $2$. The $2$-completed spectral sequence has the following form:
	$$\pi_{*,*}(\HZcplt[\eta,\sqrt\alpha]/({2\eta, \eta^2 \xrightarrow{\delta}\sqrt\alpha}))\Rightarrow \pi_{*,*}{(\kokq)^\wedge_{2}},$$
	where $\eta$ now represents a wedge summand of $\Sigma^{1,1}\HZt$ and $\sqrt\alpha$ a wedge summand of $\Sigma^{4,2}\HZcplt$. By Corollary \ref{prop:redkqiskoc2}, the spectral sequence computes the homotopy ring of $(\ko)^\wedge_2.$

	\begin{rmk}
	\label{rmk:2etacplt}
	The readers may expect to see both $2$ and $\eta$-completions in what the spectral sequence converges to. In fact, as discussed in Remark \ref{rmk:2etacpltfirst}, the $2$-completed spectrum $\kokq$ is already $\eta$-complete.
	\end{rmk}

To discuss the $E_1$-page of the $C_2$-effective spectral sequence for $\kokq$, we need to consider the homotopy rings of the Eilenberg MacLane spectra $\HZt$ and $\HZcplt$, which we denote by $(\HZcplt)_{*,*}$ and $(\HZt)_{*,*}$. 

\subsection{The homotopy ring of \texorpdfstring{$\mathbf{H}\uline{\bbF_2}$}{hzt}}\hfill\\
\label{EM1}

In this section, we describe $(\HZt)_{*,*}$. The following is a
reinterpretation of the results from \cite[Proposition 6.2]{HK}.\\

The homotopy ring $(\HZt)_{*,*}$ equals\\

$$ (\HZt)_{s,w}=
\begin{cases}
    \bbF_2, & s \leq 0 \text{ and } w \leq s \\
    \bbF_2, & s \geq 0 \text{ and } w \geq s + 2\\
    $0$, & \text{ otherwise }
\end{cases}. $$


\begin{figure}
\begin{center}
\makebox[\textwidth]{\includegraphics[trim={0cm, 0cm, 0cm, 0cm},clip,page=1,scale=0.97]{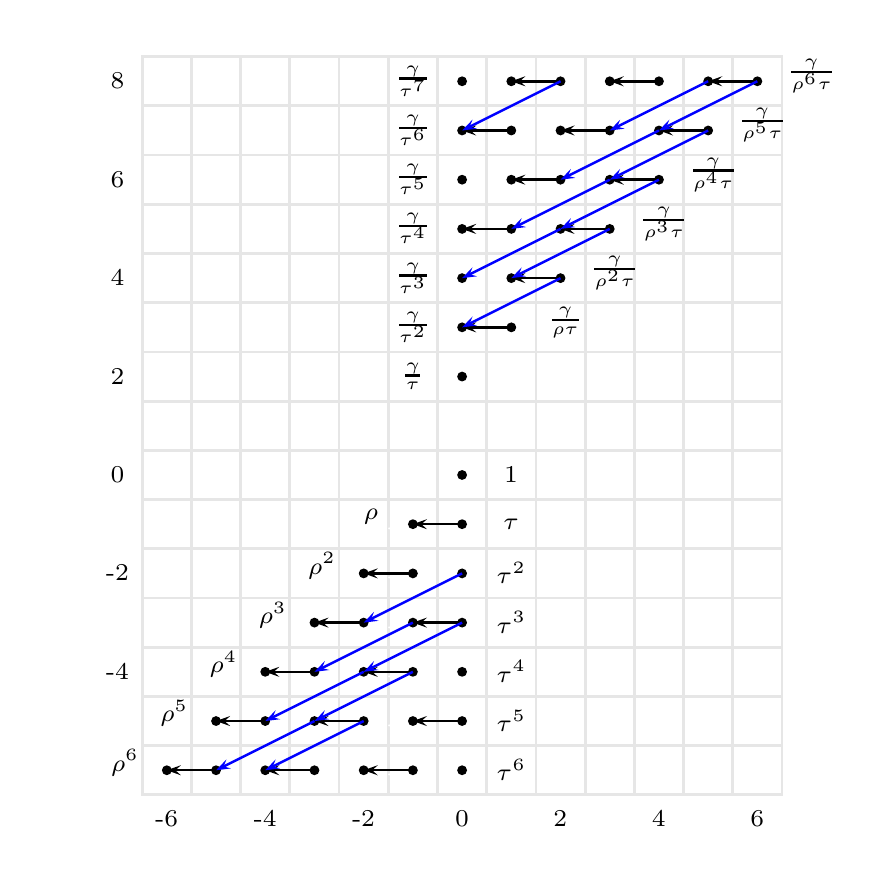}}
\caption{The homotopy ring of $\HZt.$}
\label{fig:HF}
\hfill
\end{center}
\end{figure}

Figure \ref{fig:HF} presents $(\HZt)_{s,w}$. Each dot in the figure represents a copy of $\bbF_2$. The non-zero element in degree $(0, -1)$ is called $\tau$, and the non-zero element in degree $(-1, -1)$ is called $\rho$. We refer to \cite[Section 4]{may2018structure} for geometric models for $\tau$ and $ \rho$. These two elements are polynomial generators which generate elements in the third quadrant. 

We refer to the subring $\bbF_2[\tau,\rho]\subset (\HZt)_{*,*}$ as the positive cone and the rest as the negative cone. The positive cone elements appear to have negative gradings since we adapt homological grading rather than cohomological grading.

In the negative cone, every generator has the form $\NC{i}{j}$ for some $i\geq 0,~~j\geq 1$. The element $\NC{i}{j}$ is in degree $(i,i+j+1)$. May's paper(\cite[Section 4]{may2018structure}) also gives a geometric model for $\NCt{}$ where it is named $\theta$.

The product of a positive cone element and a negative cone element is as indicated by the elements' names. When the product makes sense and formally gives an element in $(\HZt)_{*,*}$, it does. For example, we have $\NC{}{2}\cdot \tau\rho=\NCt{}$, and $\NCt{2}\cdot \tau^2=0$ since $\gamma$ is not an element in $(\HZt)_{*,*}$. 
The squares of elements in the negative cone are zero. 

Certain Steenrod operations are also marked in the figure. The black horizontal arrows are $\Sq^1$ with degree $(-1,0)$. The blue arrows are $\Sq^2$ with degree $(-2,-1)$. Steenrod squares of degree three and more are not drawn in the picture. In general, the element $\Sq^n$ has degree $(-n, -\lceil{n/2}\rceil)$.


\subsection{Bockstein spectral sequence and the homotopy ring of \texorpdfstring{$\mathbf{H}\uline{\bbZ_2}$}{hz}}\hfill\\
\label{EM2}

We consider the short exact sequence 
$$\bbZ\xrightarrow{\times 2} \bbZ\to \bbF_2.$$
It induces a cofiber sequence
$$\HZ\xrightarrow{2} \HZ \xrightarrow{\text{pr}} \HZt.$$
Passing to the homotopy rings, we obtain a $2$-Bockstein spectral sequence which converges to $(\HZcplt)_{*,*}$ and has $E_1$-page of the following form:
$$E^1_{s,f,w}=(\HZt)_{s,w}[t].$$

By the construction of the spectral sequence, the differential is the composite 
$$\HZt\xrightarrow{\delta} \Sigma^{1,0}\HZ \xrightarrow{\Sigma^{1,0}\text{pr}} \Sigma^{1,0}\HZt$$
where $\text{pr}$ is the projection map, and $\delta$ is the connecting homomorphism. The connecting homomorphism $\delta$ maps $\tau$ to $\rho$ and maps $\rho$ to zero. Passing to homotopy, we have:

\begin{prop}
\label{2Bdiff}
	Let $k$ be a positive integer. In the above 2-Bockstein spectral sequence, there are non-trivial $d_1$ differentials:
	\begin{enumerate}
		\item $d_1(\tau^{2k-1})=t\rho\tau^{2k-2}$.
		\item $d_1\left(\NC{}{2k-1}\right)=t\NCt{2k}$.
	\end{enumerate}
\end{prop}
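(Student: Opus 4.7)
The plan is to show that $d_1$ is a Leibniz derivation on the $E_1$-page and then compute it on a small set of generators; both formulas follow from the derivation property together with a few base cases. The derivation property is standard: the cofiber sequence $\HZ \xrightarrow{2} \HZ \xrightarrow{\mathrm{pr}} \HZt$ arises from a short exact sequence of ring spectra, so the connecting map $\delta$ is an $(\HZ)_{*,*}$-linear graded Leibniz derivation, and $\mathrm{pr}_*$ is a ring map; composition yields the Leibniz rule for $d_1 = \mathrm{pr}_* \circ \delta$ on $(\HZt)_{*,*}[t]$, without signs since $(\HZt)_{*,*}$ has characteristic $2$.

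For part (1), I would first compute $d_1$ on $\tau$ and $\rho$ using integral lifts. The element $\rho$ lifts to the Euler class of the sign representation in $(\HZ)_{-1,-1}$, so $d_1(\rho) = 0$. The element $\tau$ does not admit an integral lift, but $2\tau$ does, so by the definition of the connecting map $\delta(\tau)$ is precisely the integral sign Euler class, whose mod-$2$ reduction is $\rho$; hence $d_1(\tau) = t\rho$. The Leibniz rule then yields $d_1(\tau^{2k-1}) = (2k-1)\tau^{2k-2}\cdot t\rho = t\rho\tau^{2k-2}$ since $2k-1$ is odd.

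For part (2), I would propagate $d_1$ through the negative cone using the multiplicative relations recorded in Subsection \ref{EM1}. A relation of the form $\NC{}{2k-1} \cdot \tau^{2k-2} = \NC{}{1}$ (valid in the relevant bidegree, possibly modulo terms that vanish there) combined with the Leibniz rule and $d_1(\tau^{2k-2}) = 0$ for even exponents reduces the statement to a single base case $d_1(\NC{}{1}) = t\NCt{2}$. This base case is identified by recognizing $\NC{}{1}$ as an obstruction to integral lifting: comparing the Mackey functor homotopy of $\HZ$ and $\HZt$ in the relevant bidegree, the Bockstein image lands in a one-dimensional $\bbF_2$-summand of $(\HZt)_{*,*}$, which by Figure \ref{fig:HF} is generated by $\NCt{2}$.

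The main obstacle I anticipate is this base case: unlike the positive cone, the negative cone is not polynomially generated, so $d_1(\NC{}{1})$ cannot be read off from a universal property and must instead be pinned down geometrically, either by comparing the integral and mod-$2$ negative cones at the Mackey functor level or by using May's geometric model for $\NCt{}$. Once that base value is in hand, the derivation property and the parity argument $d_1(\tau^{\text{even}})=0$ mechanically deliver the full family of differentials in (2).
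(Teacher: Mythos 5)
Your part (1) is sound in substance: $d_1=\mathrm{pr}_*\circ\delta$ is the mod~$2$ Bockstein $\Sq^1$, which is a derivation, $\rho$ has the integral lift $a_\sigma$, and $\delta(\tau)$ is forced to be $a_\sigma$; the Leibniz rule then gives the odd powers. Two wording issues there: ``$2\tau$ admits an integral lift'' is not meaningful (the integral group in bidegree $(0,-1)$ is zero, and it is exactly this vanishing plus exactness of the long exact sequence that forces $\delta(\tau)\neq 0$), and the cofiber sequence $\HZ\xrightarrow{2}\HZ\to\HZt$ is not a ``short exact sequence of ring spectra'' (multiplication by $2$ is not a ring map); the Leibniz property should instead be justified by the standard multiplicativity of the Bockstein spectral sequence, equivalently the Cartan formula for $\Sq^1$. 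The genuine gap is in part (2): after reducing to the base case $d_1\left(\NC{}{1}\right)=t\NCt{2}$ you never prove it, and this nonvanishing is the entire content of the statement. Your suggested remedy---comparing with the integral (Mackey functor) homotopy of $\HZ$ in the relevant bidegrees---requires knowing the integral negative cone, which is essentially the information the $2$-Bockstein spectral sequence is being run to produce; unless you cite an independent computation of $(\HZ)_{*,*}$, the argument is circular, and ``use May's geometric model'' is not an argument.

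The base case (indeed all of part (2)) can be closed with no new geometric input, using only part (1), the Leibniz rule, and the $\bbF_2[\tau,\rho]$-module structure of the negative cone recorded in Subsection \ref{EM1}: the product $\tau^{2k-1}\cdot\NC{}{2k-1}$ vanishes (it would land outside the cones), while $\rho\tau^{2k-2}\cdot\NC{}{2k-1}=\NCt{}\neq 0$, so
\begin{equation*}
0=d_1\!\left(\tau^{2k-1}\cdot\NC{}{2k-1}\right)=t\rho\tau^{2k-2}\cdot\NC{}{2k-1}+\tau^{2k-1}\cdot d_1\!\left(\NC{}{2k-1}\right)=t\,\NCt{}+\tau^{2k-1}\cdot d_1\!\left(\NC{}{2k-1}\right),
\end{equation*}
forcing $d_1\left(\NC{}{2k-1}\right)\neq 0$; since its target bidegree is one-dimensional, spanned by $\NCt{2k}$ (and $\tau^{2k-1}\cdot\NCt{2k}=\NCt{}$, consistently), the stated differential follows. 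This is the same style of argument the paper uses later in Proposition \ref{NCdiff}. Note also that the paper's own proof of this proposition is different and shorter: it identifies $d_1$ with $\mathrm{pr}_*\circ\delta=\Sq^1$ and simply reads both families of differentials off the known $\Sq^1$-action on $(\HZt)_{*,*}$ from \cite{HK}, as displayed in Figure \ref{fig:HF}, rather than bootstrapping from generators.
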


\begin{rmk}
	By inspection, every non-zero 2-Bockstein differential is related by $\rho$ multiplications to the differentials in Proposition \ref{2Bdiff}. For example, we have $d_1(\rho^k \tau) = t \rho^{k+1}$ and $d_1\left(\frac{\gamma}{\rho^k \tau}\right) = t \frac{\gamma}{\rho^{k-1} \tau^2}$.
\end{rmk}

For degree reasons, the spectral sequence collapses after the $E_1$-page. As a result, we get the homotopy ring ${(\HZcplt)}_{*,*}$:

$${(\HZcplt)}_{s,w}=
\begin{cases}
    \bbZ_2, & s=0  \text{ and } w=2k, k\in \bbZ\\
    \bbF_2, & s \leq -1 \text{ and } w = s-2k, k\geq 0\\
    \bbF_2, & s \geq 0 \text{ and } w = s+2k+3, k\geq 0 \\
    $0$, & \text{ otherwise }
\end{cases}.$$

\begin{figure}
\begin{center}
\makebox[\textwidth]{\includegraphics[trim={0cm, 0cm, 0cm, 0cm},clip,page=2,scale=0.97]{coef.pdf}}
\caption{The coefficient ring of $\HZcplt.$}
\label{fig:HZ}
\hfill
\end{center}
\end{figure}

Figure \ref{fig:HZ} presents ${(\HZcplt)}_{s,w}$. A square in the figure represents $\bbZ_2$ while a dot represents $\bbF_2.$ We abuse notation and use the same names for elements in both $(\HZcplt)_{*,*}$ and the $E_\infty$-page of the 2 Bockstein spectral sequence. Note that the elements $\tau^{2n}$ and $\NCt{2n-1}$ detect multiple elements. Therefore in fact there are different choices of homotopy elements to be called $\tau^{2n}$ or $\NCt{2n+1}$. Definition \ref{HZname} gives specific choices.

\begin{definition}\mbox{}

	\begin{enumerate}
		\item Let $1\in(\HZcplt)_{0,0}$ be the multiplicative identity.
		\item Let $\tau^2$ be the element in $(\HZcplt)_{0,2}$ that has non-equivariant underlying image $1$ by the forgetful functor (\cite{araki1978tau}). Let $\tau^{2n}\in(\HZcplt)_{0,2n}$ be $(\tau^2)^n$.
		\item Let $\NCt{}\in(\HZcplt)_{0,-2}$ be the Hurewicz image of the quotient map (\cite{may2018structure}) $$\tilde{\theta}:S^{2,2}\to S^{2,0}\simeq S^{2,2}/C_2.$$ Let $\NCt{2n+1}$ be the element in $(\HZcplt)_{0,2}$ such that $$\NCt{2n+1}\cdot \tau^{2n}=\NCt{}.$$
	\end{enumerate}
	\label{HZname}
\end{definition}

Part of the ring structure in $(\HZcplt)_{*,*}$ is not seen from the $2$-Bockstein spectral sequence $E_\infty$-page. We discuss these multiplicative relations below. 




\begin{prop}
In $(\HZcplt)_{*,*}$, there are multiplicative relations:
\label{multiinHZ}
\begin{enumerate}
	\item $$\left(\frac{\gamma}{\tau}\right)^n=2^n\frac{\gamma}{\tau^{2n-1}}.$$
	\item $$\tau^2\cdot \frac{\gamma}{\tau}=2.$$
\end{enumerate}
\end{prop}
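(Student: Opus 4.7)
Both identities are hidden multiplicative extensions in the $2$-Bockstein spectral sequence of Subsection \ref{EM2}, whose $E_{1}$-page $(\HZt)_{*,*}[t]$ converges to $(\HZcplt)_{*,*}$ with multiplication by $t$ detecting multiplication by $2$. On the $E_{\infty}$-page the left-hand sides of (1) and (2) vanish: for (2), the formal product $\tau\cdot\gamma$ is forced out of both the positive and the negative cones of $(\HZt)_{*,*}$ described in Subsection \ref{EM1}; for (1) with $n\ge 2$, squares of negative-cone elements vanish by inspection. Consequently each of these products lies in strictly positive Bockstein filtration in $(\HZcplt)_{*,*}$, hence is divisible by a positive power of $2$, and the content of the proposition is to pin down the exact power.

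The plan for (2) is to show that $\tau^{2}\cdot(\gamma/\tau)$ is detected at Bockstein filtration exactly $1$ by $t\cdot 1$. The lower filtration bound follows from the $E_{\infty}$-vanishing above. To identify the detecting class I would apply the Leibniz rule to the differential $d_{1}(\NC{}{1})=t\,\NCt{2}$ of Proposition \ref{2Bdiff} after multiplying by $\tau^{2}$, and use the inter-cone identity $\NC{}{2}\cdot\tau\rho=\NCt{}$ of Subsection \ref{EM1} to recognize the resulting source on $E_{1}$ as a unit class. A bidegree count in $(\HZt)_{*,*}[t]$ at the weight of $\tau^{2}\cdot(\gamma/\tau)$ then rules out any additional contribution in filtration $\ge 2$, so that $\tau^{2}\cdot(\gamma/\tau)$ must equal $2\cdot 1=2$ on the nose.

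For (1), I would induct on $n\ge 2$ with (2) providing the base input. The inductive step expands $(\gamma/\tau)^{n}=(\gamma/\tau)^{n-1}\cdot(\gamma/\tau)$ and applies the induction hypothesis, reducing the claim to a manipulation of $\gamma^{2}$; relation (2) then converts the appropriate factor involving $\tau^{2}\gamma$ into $2$, producing the extra power of $2$. The main obstacle I anticipate is that $\gamma/\tau$ is $2$-torsion in $(\HZcplt)_{*,*}$, so relation (2) alone does not uniquely determine the $2$-adic coefficient of $(\gamma/\tau)^{n}$: several a priori candidates are consistent with (2) on its own. To resolve this ambiguity I would appeal to the explicit description of $(\HZcplt)_{*,*}$ given at the end of Subsection \ref{EM2}, where at the bidegree of $(\gamma/\tau)^{n}$ the $\bbZ_{2}$-summand structure together with the available Bockstein filtrations pin down $2^{n}\gamma/\tau^{2n-1}$ as the unique possibility simultaneously consistent with (2) and the $E_{\infty}$-page computation.
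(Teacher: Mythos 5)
There is a genuine gap: your argument never leaves the $2$-Bockstein spectral sequence, but that spectral sequence alone cannot pin down these products, and the concrete mechanism you propose for (2) produces nothing. Applying the Leibniz rule to $d_1(\NC{}{1})=t\,\NCt{2}$ after multiplying by $\tau^2$ gives $0=d_1(\tau^2\cdot\NC{}{1})=\tau^2\cdot t\,\NCt{2}+d_1(\tau^2)\cdot\NC{}{1}$; but $d_1(\tau^2)=0$ (only odd powers of $\tau$ support $d_1$'s, by Proposition \ref{2Bdiff} and the Leibniz rule mod $2$) and $\tau^2\cdot\NCt{2}=0$ in $(\HZt)_{*,*}$ (the paper records exactly this vanishing in Subsection \ref{EM1}), so the identity reads $0=0$ and identifies no detecting class; the inter-cone identity $\NC{}{2}\cdot\tau\rho=\NCt{}$ cannot help either, since it carries an extra factor of $\rho$ that is not present in your product. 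Moreover, your ``bidegree count ruling out filtration $\geq 2$'' is false: $(\HZcplt)_{0,0}\cong\bbZ_2$ contains elements of every positive Bockstein filtration ($4$, $8$, \dots), so even if you knew the product were detected by $t\cdot 1$ you could only conclude $\tau^2\cdot\frac{\gamma}{\tau}=2u$ for some $u\in\bbZ_2^{\times}$, not the exact value. Pinning down the value requires input external to the spectral sequence, and this is precisely what the paper uses: by Definition \ref{HZname}, $\NCt{}$ is the Hurewicz image of the quotient map $\tilde{\theta}\colon S^{2,2}\to S^{2,0}$, whose underlying nonequivariant map has degree $2$, while $\tau^2$ has underlying image $1$; since the forgetful map is injective on the relevant groups $(\HZcplt)_{0,2n}\cong\bbZ_2$ and $(\HZcplt)_{0,0}\cong\bbZ_2$, comparing underlying images gives both relations at once. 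Your proposal never invokes the forgetful functor or the geometric model of $\NCt{}$, so the essential ingredient is missing.

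Two smaller points. First, your anticipated obstacle for (1) is not real: $\frac{\gamma}{\tau}$ is not $2$-torsion in $(\HZcplt)_{*,*}$, since the group in its bidegree is $\bbZ_2$ (a square in Figure \ref{fig:HZ}), not $\bbF_2$; the $2$-torsion statement only holds in $(\HZt)_{*,*}$. Second, once (2) is available your induction for (1) can be made to close, but not by ``manipulating $\gamma^2$'': multiply $\bigl(\frac{\gamma}{\tau}\bigr)^{n}$ by $\tau^{2n-2}$, apply (2) repeatedly to each factor $\frac{\gamma}{\tau}\cdot\tau^2$, and use that multiplication by $\tau^{2n-2}$ is injective on $(\HZcplt)_{0,2n}\cong\bbZ_2$ because it carries the generator $\NCt{2n-1}$ to $\NCt{}$; alternatively, as in the paper, just compare underlying images of both sides directly. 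As written, however, the base computation (2) is unproved, so the argument does not go through.
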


\begin{proof}
	Since the non-equivariant underlying image of the quotient map $\tilde{\theta}$ is of degree 2, the element $\NCt{}$ has underlying image $2$. Since $\tau^2$ has underlying image $1$, we obtain that $\NCt{2n-1}$ has underlying image $2$ for any positive integer $n$. We prove the results by comparing the non-equivariant underlying maps.
	\begin{enumerate}
		\item The generator $\NCt{2n-1}$ of $(\HZcplt)_{0,-2}$ has underlying image $2$, while the element $\left(\frac{\gamma}{\tau}\right)^n$ has underlying image $2^n$.  
		\item The product lands in degree $(0,0)$ and has underlying image $2.$
		\end{enumerate}
%


\end{proof}

%
%
%

%

%
%
%
\section{Computation of the \texorpdfstring{$C_2$}{C2}-effective spectral sequence}
\label{diff}

\subsection{The \texorpdfstring{$C_2$}{C2}-effective \texorpdfstring{$E_1$}{E1}-page}\hfill\\
\label{E1page}



Using the results from Sections \ref{EM1} and \ref{EM2}, we have a description of the $E_1$-page of the $C_2$-effective spectral sequence of $\kokq.$ The spectral sequence is tri-graded. We denote the gradings by $(s,q,w)$, where $s$ is the stem, $q$ is the slice filtration, and $w$ is the weight. 

\begin{thm}
	Additively, the $E_1$-page of the effective spectral sequence of $\kokq$ is the following:
	$$E^1_{*,*,*}=(\HZcplt)_{*,*}[v_1^2]\oplus (\HZt)_{*,*}[h_1,v_1^2]\{h_1\}$$
	where $\abs{h_1}=(1,1,1)$ and $\abs{v_1^2}=(4,2,2).$ 
\end{thm}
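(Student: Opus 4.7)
The plan is to read off the additive $E_1$-page directly from the identification (\ref{koc2e1spec}) of the $C_2$-effective slices, combined with the coefficient computations of Sections \ref{EM1} and \ref{EM2}. The key observation is that the description
$$s_*(\kokq) \simeq \HZ[\eta, \sqrt{\alpha}]/(2\eta, \eta^2 \xrightarrow{\delta} \sqrt{\alpha})$$
encodes the slices additively as wedges indexed by the distinct monomials in $\eta$ and $\sqrt{\alpha}$, with the $\delta$-relation being purely multiplicative. After $2$-completion, a monomial with no $\eta$-factor has coefficient spectrum $\HZcplt$, while a monomial with a positive power of $\eta$ has coefficient spectrum $\HZt$ (this is exactly the content of the relation $2\eta=0$).

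First I would enumerate the generating monomials. The monomials with no $\eta$ are the powers $(\sqrt{\alpha})^q$ for $q \geq 0$, each contributing a wedge summand $\Sigma^{4q,2q}\HZcplt$ to $s_{2q}(\kokq)$. The monomials containing $\eta$ are $\eta^p (\sqrt{\alpha})^q$ with $p \geq 1$, each contributing a summand $\Sigma^{p+4q,\, p+2q}\HZt$ to $s_{p+2q}(\kokq)$. Writing $h_1$ for the unit class of the summand $\Sigma^{1,1}\HZt \hookrightarrow s_1(\kokq)$ indexed by $\eta$, and $v_1^2$ for the unit class of the summand $\Sigma^{4,2}\HZcplt \hookrightarrow s_2(\kokq)$ indexed by $\sqrt{\alpha}$, these generators land in tridegrees $(s,q,w) = (1,1,1)$ and $(4,2,2)$ respectively, matching the claimed bidegrees.

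Then I would apply $\pi^{C_2}_{s,w}(-)$ slice by slice and sum over $q$, using
$$E^1_{s,q,w} = \pi^{C_2}_{s,w}\bigl(s_q(\kokq)\bigr).$$
The contributions from the monomials $(\sqrt{\alpha})^q$ assemble, as $q$ varies, into the polynomial module $(\HZcplt)_{*,*}[v_1^2]$ with $v_1^2$ shifting degree by $(4,2,2)$. Dually, the contributions from $\eta^p(\sqrt{\alpha})^q$ with $p \geq 1$ assemble into $(\HZt)_{*,*}[h_1, v_1^2]\{h_1\}$ with $h_1$ shifting by $(1,1,1)$. Adding these two pieces gives the asserted decomposition.

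The argument is essentially bookkeeping: the slice formula (\ref{koc2e1spec}) already packages the answer, and all tridegrees are forced by the suspension indices recorded in Subsection \ref{subsec:esss}. I do not expect a genuine obstacle here; the multiplicative relation encoded by $\delta$ is irrelevant at the additive level and will become important only when the ring structure of the $E_1$-page (and, later, the $d_1$ differentials) is analyzed.
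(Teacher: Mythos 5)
Your proposal is correct and follows essentially the same route as the paper: the paper also obtains the additive $E_1$-page by bookkeeping from the slice description (\ref{koc2e1spec}) (with $\HZt$ summands indexed by monomials $\eta^p(\sqrt\alpha)^q$, $p>0$, and $\HZcplt$ summands indexed by $(\sqrt\alpha)^q$ after $2$-completion), applying $\pi^{C_2}_{*,*}$ slicewise using the coefficient computations of Sections \ref{EM1} and \ref{EM2}, and noting that the $\delta$-relation only affects the multiplicative structure. The tridegrees you assign to $h_1$ and $v_1^2$ agree with the paper's.
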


\begin{rmk}
We give the precise meanings of the elements $h_1$ and $v_1^2.$	
\begin{enumerate}
	\item By (\ref{koc2e1spec}), we have that $s_1(\kokq)\cong \Sigma^{1,1}\HZt$. The element $h_1$ represents the element in $ \pi_{*,*}(s_1(\kokq))\cong (\HZt)_{*,*}[1,1]$ that is the $(1,1)$ suspension of the unit element $1\in (\HZt)_{*,*}$.
		\item By (\ref{koc2e1spec}), we have that $s_2(\kokq)\cong \Sigma^{4,2}\HZcplt\vee \Sigma^{2,2}\HZt.$ As a result, we can express its homotopy groups as a direct sum: $\pi_{*,*}(s_2(\kokq))\cong (\HZcplt)_{*,*}[4,2]\oplus (\HZt)_{*,*}[2,2].$ The element $v_1^2$ represents the element in the summand $(\HZcplt)_{*,*}[4,2]$ that is the $(4,2)$ suspension of the unit element $1\in (\HZcplt)_{*,*}$.
\end{enumerate}
\end{rmk}

The multiplicative relations are endowed from the ring structures in $(\HZcplt)_{*,*}$ and $(\HZt)_{*,*}$, together with the extra relation $\delta:\eta^2 \to \sqrt\alpha.$ Since the connecting homomorphism $\delta$ maps $a\in (\HZt)_{*,*}$ to $\Sq^1(a)\in (\HZcplt)_{*,*},$ this extra relation translates to the following formula after passing to homotopy:
\begin{equation}
\label{multirelationhomo}
	ah_1\cdot bh_1=ab h_1^2 +\Sq^1(a)\Sq^1(b) v_1^2, ~~a, b \in (\HZt)_{*,*}.
\end{equation}

After multiplication by $v^{2}_1$, the relation (\ref{multirelationhomo}) generates other relations for products $ah_1v_1^{2m}\cdot bh_1v_1^{2n}$.
\begin{eg}
\label{example:tauhsquare}
	It's worth mentioning that $(\tau h_1)^2$ and $\tau^2 h_1^2$ are two different elements because of this extra multiplicative relation. When $a=b=\tau$, we have that 
	$$(\tau h_1)\cdot (\tau h_1)=\tau^2\cdot h_1^2+\rho^2 \cdot v_1^2.$$
	This relation leads to interesting results in the $C_2$-effective $d_1$ differential computation later in Section \ref{subsec:d1diff}.
\end{eg}

Below is a table which lists the multiplicative generators and their degrees in the $E_1$-page. We call an element in $E_1$-page a positive (resp., negative) cone element if it is a suspension of an element in the positive (resp., negative) cone of $(\HZt)_{*,*}$ or $(\HZcplt)_{*,*}.$

Charts of the $E_1$-page by coweights are attached in Section \ref{charts}. 

\begin{table}[h]
    \centering
    \begin{tabular}{ccc}
    Elements & $(s,f,w)$ & $s-w$\\
    \hline
    $\rho$ & (-1,0,-1) & 0\\
    $\tau^2$ & (0,0,-2) & 1\\
    $h_1$  & (1,1,1) & 0\\
    $\tau h_1$ & (1,1,0) & 1\\
    $v_1^2$ & (4,2,2) & 2\\
    $\NC{i}{j}$ & $(i, 0 ,i+j+1)$ & $-j-1$
\end{tabular}
    \caption{Symbols for multiplicative generators in the $C_2$-effective $E_1$-page}
    \label{E1}
\end{table}

\subsection{The \texorpdfstring{$d_1$}{d1} differentials}\hfill\\
\label{subsec:d1diff}

We discuss $d_1$ differentials supported by the positive cone elements and the negative cone elements separately.

\begin{prop}
\label{PCd1}
The nontrivial $d_1$ differentials supported by the positive cone elements are determined by the following, together with the Leibniz rule:
\begin{enumerate}
	\item $d_1(\tau^2)=\rho^2 \cdot \tau h_1.$
	\item  $d_1(v_1^2)=\tau h_1\cdot h_1^2.$	
\end{enumerate}
\label{PCdiff}
\end{prop}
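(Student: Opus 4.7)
The plan is to compute the two $d_1$ differentials by identifying them as the Betti realizations of the corresponding motivic $d_1$ differentials in the effective slice spectral sequence of $\kq$ (due to R\"ondigs--Spitzweck--\O stv\ae r), and then evaluating the resulting cohomology operations on the specific classes $\tau^2$ and $v_1^2$.

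First, recall that the $d_1$ differential on a class $x\in E_1^{s,q,w}=\pi_{s,w}(s_q(\kokq))$ is induced by the connecting map $\partial_q:s_q(\kokq)\to \Sigma^{1,0}s_{q+1}(\kokq)$ coming from the cofiber sequence $f_{q+1}(\kokq)\to f_q(\kokq)\to s_q(\kokq)$. Since the $C_2$-effective tower of $\kokq$ is $\ReB$ applied to the motivic effective slice tower of $\kq$, and $\ReB$ preserves cofiber sequences by Proposition \ref{prop:betti}, the map $\partial_q$ is the realization of the motivic connecting map $\overline{\partial}_q$ for $\kq$. In particular, the motivic map $\overline{\partial}_0:\MZ\to\Sigma^{2,1}\MZt$ is the composite of mod-2 reduction with $\Sq^2$, and the restriction of $\overline{\partial}_2$ to the $\Sigma^{4,2}\MZ$ summand of $s_2\kq$ lands in the $\eta^3$-summand $\Sigma^{3,3}\MZt$ of $s_3\kq$ via an analogous Steenrod-operation-type map. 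After Betti realization, the same descriptions hold for $\kokq$.

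Now compute on the two generators. The class $\tau^2\in(\HZcplt)_{0,-2}$ reduces modulo $2$ to $\tau^2\in(\HZt)_{0,-2}$. Using the motivic (equivalently, $C_2$-equivariant) Cartan formula $\Sq^2(xy)=\Sq^2(x)\,y+\tau\,\Sq^1(x)\,\Sq^1(y)+x\,\Sq^2(y)$, together with $\Sq^1(\tau)=\rho$ (from Section \ref{EM1}) and $\Sq^2(\tau)=0$ by instability, one obtains $\Sq^2(\tau^2)=\tau\rho^2\in(\HZt)_{-2,-3}$. Under the identification of $\pi_{-1,-2}(\Sigma^{1,1}\HZt)\cong(\HZt)_{-2,-3}\{h_1\}$, this gives $d_1(\tau^2)=\rho^2\tau h_1$. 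For $v_1^2\in\pi_{4,2}(\Sigma^{4,2}\HZcplt)\subset\pi_{4,2}(s_2(\kokq))$, an analogous Steenrod computation, combined with the multiplicative relation $\eta^2\xrightarrow{\delta}\sqrt\alpha$ that governs how the operation interacts with the $\Sigma^{4,2}\HZcplt$ summand, yields $d_1(v_1^2)=\tau h_1\cdot h_1^2$. The Leibniz rule, valid because the spectral sequence is multiplicative as the Betti realization of the multiplicative motivic one, extends these two computations to all $d_1$ differentials supported on positive cone elements.

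The main subtlety is the $\tau$-correction appearing in the motivic/equivariant Cartan formula: without this correction, one would compute $\Sq^2(\tau^2)=(\Sq^1\tau)^2=\rho^2$, which lies in the wrong bidegree $(-2,-2)$ rather than $(-2,-3)$. The extra factor of $\tau$ forced by the correction is essential both for matching degrees and for the later analysis of higher-$\tau$-power differentials via Leibniz.
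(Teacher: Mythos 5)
Your overall strategy coincides with the paper's: the $C_2$-effective $d_1$ is the Betti realization of the motivic slice $d_1$ for $\kq$, which is known on the spectrum level in terms of cohomology operations, and one evaluates these operations on the homotopy classes. Your treatment of item (1) is correct and in fact more detailed than the paper's: with the zeroth connecting map given by $\Sq^2\circ\mathrm{pr}$ (a possible $\rho\Sq^1$-correction would not matter, since $\Sq^1(\tau^2)=0$), the twisted Cartan formula gives $\Sq^2(\tau^2)=\rho^2\tau$ independently of the value of $\Sq^2(\tau)$, because the two terms involving $\Sq^2(\tau)$ cancel mod $2$; the appeal to instability is unnecessary (and as stated it is not quite a valid justification, since $H^{2,2}$ of the $\bbR$-point is nonzero).

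Item (2), however, has a genuine gap. The only component of the connecting map that can contribute is $\Sigma^{4,2}\MZ\to\Sigma^{1,0}\Sigma^{3,3}\MZt=\Sigma^{4,3}\MZt$, an operation of bidegree $(0,1)$; this is not a Steenrod square but (essentially) multiplication by $\tau$ composed with mod $2$ reduction, and the entire content of (2) is that this component is nonzero. Degree considerations only show that the target group in tridegree $(3,3,2)$ is $\bbF_2\{\tau h_1\cdot h_1^2\}$, so without identifying this component you cannot rule out $d_1(v_1^2)=0$. The mechanism you invoke, the relation $\eta^2\xrightarrow{\delta}\sqrt\alpha$, describes the multiplication $s_1\wedge s_1\to s_2$ of the slices and plays no role in the differential $s_2\to\Sigma^{1,0}s_3$; the phrase ``an analogous Steenrod computation'' is an assertion rather than an argument. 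The missing input is precisely what the paper cites: the explicit spectrum-level formula for the first slice differential of $\kq$ in \cite[Theorem 5.5]{rondigs2016slices} (note these differentials are due to R\"ondigs--\O stv\ae r; the R\"ondigs--Spitzweck--\O stv\ae r computation you cite concerns the sphere), or, alternatively, Levine's comparison with the Adams--Novikov spectral sequence for $\mathbf{ko}$, where $v_1^2$ supports $d_3(v_1^2)=h_1^3$, which is the route the paper sketches in the remark following the proposition.
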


\begin{pf}
R\"ondigs and \O stvær computed the first slice differentials on the spectrum level. The maps $\delta_1(q):s_1\kq \to \Sigma^{1,0}s_{q+1}\kq$ are described in terms of Steenrod squares in \cite[Theorem 5.5]{rondigs2016slices}. After applying the realization functor, we get the differential maps in the $C_2$-effective slice tower in terms of $C_2$-equivariant Steenrod squares. The result follows by computing Steenrod squares on the homotopy elements. 
\end{pf}

\begin{rmk}
There are different ways to analyze the $d_1$ differentials in the $C_2$-effective spectral sequence. We can push forward or pull back what is known in $\bbR$-motivic homotopy theory and classical homotopy theory using the top horizontal and right vertical functors in Diagram \ref{magicsquare}. 

For example, we can compare it to the classical Adams-Novikov spectral sequence for the real $K$-theory spectrum $\mathbf{ko}.$ By \cite{levine2015ANSS}, Betti realization induces an isomorphism between the $\bbC$-motivic effective slice spectral sequence and the Adams-Novikov spectral sequence up to reindexing. In the Adams-Novikov spectral sequence of $\mathbf{ko}$, the classical element $v_1^2$ hits $h_1^3$ by a $d_3$ differential. Thus by inspection, we conclude that in the $\bbC$-motivic effective slice spectral sequence of the $\bbC$-motivic spectrum $\kq$, the element $v_1^2$ supports a $d_1$ differential which kills $\tau h_1\cdot h_1^2$. The same is true in the $\bbR$-motivic stable homotopy category. We then take the Betti realization to get the $C_2$-equivariant effective differential $d_1(v_1^2)=\tau h_1\cdot h_1^2$. 
\end{rmk}

As in the proof of Proposition \ref{PCd1}, the spectrum level $d_1$ differential maps in the $C_2$-effective slice tower are known. By passing to homotopy groups, we can get formulas for differentials in both cones by computing the Steenrod squares. 
Below we adapt another more algebraic approach to compute the differentials on negative cone elements.

\begin{prop}
For any non-negative integer $k$, 
there are $d_1$ differentials
$$d_1 \left(\NC{2}{4k+2}\right) = \NCt{4k+3} h_1.$$ 
\label{NCdiff}
\end{prop}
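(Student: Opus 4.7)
The strategy is to derive the negative-cone $d_1$ differential from the positive-cone differential $d_1(\tau^2)=\rho^2\tau h_1$ of Proposition \ref{PCd1} by applying the Leibniz rule together with the multiplicative relations in $(\HZcplt)_{*,*}$ recorded in Proposition \ref{multiinHZ}. As a preliminary, I would first verify $d_1(\NCt{})=0$. Applying Leibniz to the identity $\tau^2\cdot\NCt{}=2$ of Proposition \ref{multiinHZ} yields $d_1(\tau^2)\cdot\NCt{}+\tau^2\cdot d_1(\NCt{})=0$, and the first term vanishes because $\NCt{}$ projects to zero under $\HZcplt\to\HZt$ and hence annihilates the mod~$2$ summand $(\HZt)_{*,*}[h_1,v_1^2]\{h_1\}$ of the $E_1$-page in which $d_1(\tau^2)\cdot\NCt{}$ would live; a degree check on the possible target together with injectivity of multiplication by $\tau^2$ then forces $d_1(\NCt{})=0$.

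The main computation propagates this information by applying $d_1$ to the defining relation $\NCt{4k+3}\cdot\tau^{4k+2}=\NCt{}$ from Definition \ref{HZname}. Leibniz combined with the preliminary step gives
\[
d_1(\NCt{4k+3})\cdot\tau^{4k+2}+\NCt{4k+3}\cdot d_1(\tau^{4k+2})=0.
\]
Iterated Leibniz on the positive-cone formula yields $d_1(\tau^{2m})=m\rho^2\tau^{2m-1}h_1\pmod{2}$. With $2m=4k+2$ the coefficient $m=2k+1$ is odd, so $d_1(\tau^{4k+2})=\rho^2\tau^{4k+1}h_1$ is nonzero. Substituting and solving determines $d_1$ on $\NCt{4k+3}$, and translating via the multiplicative identification $\NC{2}{4k+2}\cdot\rho^2=\NC{0}{4k+2}$ in the negative cone (together with $d_1(\rho)=0$) produces the stated formula $d_1(\NC{2}{4k+2})=\NCt{4k+3}h_1$.

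The principal obstacle is to make sense of $\NC{2}{4k+2}$ as a genuine class in $(\HZcplt)_{*,*}$ and to set up the correct multiplicative comparison with $\NCt{4k+3}$. A computation of the $\Sq^1$-action on the negative cone (using that $\Sq^1$ is a derivation with $\Sq^1(\tau)=\rho$ and the module-type relations $\NC{i}{j}\cdot\rho=\NC{i-1}{j}$ and $\NC{i}{j}\cdot\tau=\NC{i}{j-1}$) shows that $\Sq^1\NC{2}{4k+2}$ vanishes precisely because $4k+2$ is even; hence this element is a $2$-Bockstein $d_1$-cycle and admits a canonical lift to $(\HZcplt)_{*,*}$. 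A potential alternative to the Leibniz/multiplicative route is to extract the differential directly from the spectrum-level first slice differential described by R\"ondigs--\O stv\ae r in terms of Steenrod squares, paralleling the proof of Proposition \ref{PCd1}; the algebraic approach has the advantage of exposing the role of the $2$-Bockstein structure in $(\HZcplt)_{*,*}$ and of avoiding direct computation of Steenrod squares on negative cone classes.
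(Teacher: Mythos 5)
Your strategy (Leibniz rule fed by the positive-cone differential $d_1(\tau^{4k+2})=\rho^2\tau^{4k+1}h_1$) is the same germ as the paper's argument, but the relation you actually differentiate carries no information, and the step transporting it to $\NC{2}{4k+2}$ does not work. Differentiating $\NCt{4k+3}\cdot\tau^{4k+2}=\NCt{}$ gives $d_1(\NCt{4k+3})\cdot\tau^{4k+2}+\NCt{4k+3}\cdot\rho^2\tau^{4k+1}h_1=0$, but the cross term is already zero: $\rho$ annihilates every class $\frac{\gamma}{\tau^j}$ (formally $\rho\gamma/\tau^2$ is not a negative-cone element, and a degree check confirms the vanishing), so your equation reads $0=0$ and determines nothing --- and in any case $d_1(\NCt{4k+3})$ is not the differential the proposition asserts. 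The subsequent ``translation'' is also invalid: the class $\NC{2}{4k+2}$ of the proposition lives in the filtration-zero summand $(\HZcplt)_{*,*}$, where the identity $\NC{2}{4k+2}\cdot\rho^2=\NC{0}{4k+2}$ fails, because $\frac{\gamma}{\tau^{4k+2}}$ is not an integral class at all (it dies in the $2$-Bockstein spectral sequence; the integral groups in stem $0$ sit only in even weights), so in fact $\NC{2}{4k+2}\cdot\rho^2=0$ there. Even working mod $2$, the Leibniz rule only yields $d_1(x)\cdot\rho^2=d_1(x\rho^2)$, and one cannot divide by $\rho^2$: the expected answer $\NCt{4k+3}h_1$ lies in the kernel of $\rho$-multiplication, so no $\rho^2$-translated identity can detect it. A smaller error with the same root: $\NCt{}$ does not project to zero under $\HZcplt\to\HZt$; it reduces to the nonzero class $\frac{\gamma}{\tau}$, and the vanishings you need in the preliminary step ($d_1(\tau^2)\cdot\NCt{}=0$ and $d_1(\NCt{})=0$) hold simply for degree reasons, not because of a zero reduction.

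The repair is to run your Leibniz argument one $\rho^2$ ``lower,'' which is exactly the paper's proof: the product $\NC{2}{4k+2}\cdot\tau^{4k+2}$ vanishes (formally $\gamma/\rho^2$, which is not an element of $(\HZcplt)_{*,*}$), so
$0=d_1\bigl(\NC{2}{4k+2}\bigr)\cdot\tau^{4k+2}+\NC{2}{4k+2}\cdot\rho^2\tau^{4k+1}h_1$,
and now the cross term is nonzero: the mod $2$ reduction of $\NC{2}{4k+2}$ is the class $\frac{\gamma}{\rho^2\tau^{4k+2}}$, and multiplying it by $\rho^2\tau^{4k+1}$ gives $\NCt{}$, whence $d_1\bigl(\NC{2}{4k+2}\bigr)\cdot\tau^{4k+2}=\NCt{}h_1\neq 0$. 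Since the only nonzero class in the target tridegree is $\NCt{4k+3}h_1$, the stated differential follows. Your closing observation that $\NC{2}{4k+2}$ is a $\Sq^1$-cycle, hence an honest integral class, is correct and worth keeping; but the heart of the computation is precisely that this class survives multiplication by $\rho^2\tau^{4k+1}$ after reduction --- the opposite of the vanishing your write-up leans on.
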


\begin{pf}

The Leibniz rule gives that:
$$
 	0= d_1\left(\NC{2}{4k+2}\cdot \tau^{4k+2}\right)=d_1\left(\NC{2}{4k+2}\right)\cdot \tau^{4k+2}+\NC{2}{4k+2}\cdot d_1(\tau^{4k+2}).
$$
The result follows by Proposition \ref{PCd1} which gives that $d_1(\tau^{4k+2})=\rho^2\tau^{4k+1}h_1$. 
%
\end{pf}

\begin{rmk}
In fact, we can prove that 
\begin{equation}
\label{equa:NCd1}
	d_1\left(\NC{i}{j}\right)=		
	\begin{cases}
		\NC{i-2}{j+1} h_1, & j=4k+2,\\
		0, & \text{otherwise.}
	\end{cases}
\end{equation}	
\end{rmk}
When $j$ is odd, this follows by inspection. When $j$ is even, this follows by the Leibniz rule and Proposition \ref{PCd1}.

\begin{rmk} 
The additive generators in the negative cone in the $E_1$-page are decomposable in one of the following forms:
 	$$\NC{i}{j} \cdot h_1^p v_1^{2q} \text{ or } \NC{i}{j} \cdot \left(\tau h_1\right)\cdot h_1^p v_1^{2q}.$$ Therefore, we can use Propositions \ref{PCdiff} and \ref{NCdiff} together with the Leibniz rule to determine all other $d_1$ differentials. We include the following example where the multiplicative relation (\ref{multirelationhomo}) is involved.
\end{rmk}

\begin{eg}
	We compute $d_1\left(\NC{i}{4k+1} h_1 \right)$ where $k$ is a non-negative integer.  
	
	The element $\NC{i}{4k+1} h_1$ is decomposable as 
	$$\NC{i}{4k+1} h_1= \NC{i}{4k+2}\cdot\tau h_1.$$
	We use Proposition \ref{NCdiff} and that $d_1(\tau h_1)=0.$ By the Leibniz rule, 
	\begin{align*}
		d_1\left(\NC{i}{4k+1} h_1 \right)&=d_1\left(\NC{i}{4k+2}\right)\cdot\tau h_1\\
		&=\NC{i-2}{4k+3} h_1 \cdot \tau h_1\\
		&=\NC{i-2}{4k+2} h_1^2 + \Sq^1\left(\NC{i-2}{4k+3}\right)\Sq^1(\tau)v_1^2\\
		&=\NC{i-2}{4k+2} h_1^{2}+ \NC{i-4}{4k+4} v_1^2.
	\end{align*}
Thus the sum is killed. Therefore, after the $E_1$-page, the element $\NC{i-2}{4k+2} h_1^{2}$ is identified with $\NC{i-4}{4k+4} v_1^2.$
\end{eg}

\subsection{The \texorpdfstring{$E_\infty$}{collapse}-page }\hfill\\

In this section, we show that the spectral sequence collapses at the $E_2$-page. 


\begin{prop}
	In the effective equivariant spectral sequence for $\kokq$, the only nontrivial differentials are on the $E_1$-page.
\end{prop}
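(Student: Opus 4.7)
The plan is to establish collapse at $E_2$ by verifying that each multiplicative generator of the $E_2$-page is a permanent cycle; since the $C_2$-effective spectral sequence of a ring spectrum is multiplicative (being the Betti realization of the multiplicative motivic effective slice spectral sequence of $\kq$), the Leibniz rule will then force every higher differential to vanish.

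First, I would compute the $E_2$-page explicitly by taking homology of $(E_1, d_1)$: the differentials in Propositions \ref{PCd1} and \ref{NCdiff}, together with the Leibniz rule and the multiplicative relation (\ref{multirelationhomo}), determine $d_1$ on every additive generator of $E_1$. The surviving classes should organize into a $v_1^{2k}$-periodic pattern for some small $k$, mirroring the classical $v_1^4$-periodicity of $\pi_*\mathbf{ko}$ on the positive cone, with additional negative cone contributions tied to the $2$-torsion in $(\HZcplt)_{*,*}$.

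Next I would identify a minimal multiplicative generating set for $E_2$ and show each generator is a permanent cycle. The classes $h_1$, $\tau h_1$, and $\rho$ detect nonzero homotopy elements of $(\kokq)^{\wedge}_{\eta}$ and are therefore permanent cycles. The periodicity generator, some appropriate power of $v_1^2$, can be shown to be a permanent cycle by comparison with the classical Adams--Novikov spectral sequence for $\mathbf{ko}$ via Levine's theorem \cite{levine2015ANSS}, which relates the $\bbC$-motivic effective slice spectral sequence to the Adams--Novikov spectral sequence up to reindexing; any higher differential on $v_1^{2k}$ would descend nontrivially along the forgetful functor $U$ in Diagram \ref{magicsquare} and contradict the known classical behavior. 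Negative cone generators $\NC{i}{j}$ become positive cone elements after multiplication by a suitable power of $\tau$, so any hypothetical nonzero $d_r$ on them would force a nonzero $d_r$ on an already-established permanent positive cone cycle, a contradiction.

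The main obstacle I anticipate is the bookkeeping around the extra multiplicative relation $(\tau h_1)^2 = \tau^2 h_1^2 + \rho^2 v_1^2$ of Example \ref{example:tauhsquare}, which entangles $h_1^2$, $v_1^2$, and $(\tau h_1)^2$: the permanent-cycle status of one of these classes must be propagated consistently to the others through this identification, and the interaction with negative cone $\rho$-divisible classes must be handled carefully. As an independent sanity check, I would compare the resulting $E_\infty$-page tridegree-by-tridegree with the computation of $\pi_{*,*}((\ko)^{\wedge}_{2})$ from \cite{GHIR} (using Corollary \ref{prop:redkqiskoc2}); agreement in each degree confirms that no further differentials can occur and hence that $E_2 = E_\infty$.
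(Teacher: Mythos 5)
Your overall plan (exhibit a multiplicative generating set of permanent cycles and quote the Leibniz rule) could in principle be made to work, but two of your key steps fail as stated. The argument that a power of $v_1^2$ is a permanent cycle by pushing forward along $U$ in Diagram \ref{magicsquare} is not valid: the forgetful functor kills $\rho$ and the entire negative cone, so a hypothetical $d_r(v_1^{4})$ would land precisely in classes invisible to $U$, and collapse of the classical Adams--Novikov spectral sequence for $\mathbf{ko}$ yields no contradiction. What does work is either direct inspection of the target tridegrees (they lie in coweight $3 \pmod 4$, where the relevant groups vanish) or the paper's argument: $\rho$ is a permanent cycle for degree reasons, so no differential can go from a $\rho$-torsion class to a $\rho$-torsion-free class, nor from an infinitely $\rho$-divisible class to one that is not; this excludes almost everything at once. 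Relatedly, ``detects a nonzero homotopy element, hence is a permanent cycle'' is circular as phrased, since detection presupposes survival; one must first produce the homotopy classes and identify their filtration (as the paper does for $\eta$ and $h_1$ via Levine's theorem), and using agreement with \cite{GHIR} as part of the argument would forfeit the independence of the computation.

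The more serious gap is your treatment of the negative cone, which is exactly where the only genuine danger lies. Negative cone classes are $\rho$- and $\tau$-torsion and infinitely divisible; multiplying $\NC{i}{j}$ by powers of $\tau$ never produces a positive cone class --- it produces another negative cone class or zero (for instance $\NCt{2}\cdot\tau^{2}=0$) --- so the premise of your intended contradiction is false. Moreover the inference runs backwards: even given a relation $x\cdot\tau^{N}=y$ with $y$ and $\tau^{N}$ permanent cycles, the Leibniz rule only yields $d_r(x)\cdot\tau^{N}=0$, and since multiplication by $\tau^{N}$ is far from injective on the potential targets, this does not force $d_r(x)=0$, nor does a nonzero $d_r(x)$ force a nonzero differential on $y$. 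The paper isolates the one family not excluded by the $\rho$-torsion/divisibility constraints, namely $\NC{}{4i+2}v_1^{4j}$ in coweight $4k+1$, and disposes of it by Leibniz in the opposite direction: $v_1^{4j}$ is already a permanent cycle and $d_r\bigl(\NC{}{4i+2}\bigr)=0$ for degree reasons, hence $d_r\bigl(\NC{}{4i+2}v_1^{4j}\bigr)=0$. Your proposal as written contains no argument that covers this family, so the proof is incomplete at its critical point.
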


\begin{pf}

For degree reasons, the element $\rho$ is a permanent cycle. Therefore there cannot be a differential from a $\rho$ torsion element to a $\rho$ torsion free element. Similarly, there cannot be a differential from an infinitely $\rho$-divisible element to an element that is not infinitely $\rho$-divisible. These rule out almost all possible higher differentials. 

The only exception is in coweight $4k+1$. The negative cone elements $\NC{}{4i+2}v_1^{4j}$ might possibly support non-trivial higher differentials. We show this cannot happen by computation. The element $v_1^{4j}$ is a permanent cycle by the discussion before. Therefore, for any $r\geq 2, $ we have that
$$d_r\left(\NC{}{4i+2}v_1^{4j}\right)=d_r\left(\NC{}{4i+2}\right)\cdot v_1^{4j}=0$$ since $d_r\left(\NC{}{4i+2}\right)=0$ for degree reasons.
%
\end{pf}

The charts of the $E_\infty$-page by coweights are attached in Section \ref{subsec:charts}.




\section{The homotopy of \texorpdfstring{$(\kokq)^\wedge_2$}{koc2} }
\label{ext}

In this section, we solve the extension problems in the spectral sequence to obtain the homotopy groups of $\kokq$ after $2$-completion. We follow the definition of hidden extension as in \cite{isaksen2014stable}.

\begin{definition}
 Let $A$ be an element of $\pi_{*,*}$ that is detected by an element
$a$ of the $E_\infty$-page of the spectral sequence. A hidden extension by
$A$ is a pair of elements $b$ and $c$ of the $E_\infty$-page such that:
\begin{enumerate}
  \item the product $a\cdot b$ equals zero in the $E_\infty$-page.
  \item There exists an element $B$ of $\{b\}$ such that $A\cdot B$ is contained in $\{c\}$.
  \item If there exists an element $B'$ of $\{b'\}$ such that $A\cdot B'$
is contained in $\{c\}$, then the filtration of $b'$ is less than or equal to the filtration of $b$.
\end{enumerate}
	
\end{definition}

In the spectral sequence, the elements $\tau^4$ and $v_1^4$ are permanent cycles and survive to the $E_\infty$-page. We abuse the notation and use $\tau^4$ and $v_1^4$ to denote elements in homotopy. The definitions of these two elements in homotopy are as follows:

\begin{definition}
\label{deftau4v4}
	Let $\tau^4$ be an element in $\pi^{C_2}_{0,-4}((\kokq)^\wedge_2)$ that is detected by $\tau^4$. Let $v_1^4$ be the element in $\pi^{C_2}_{8,4}((\kokq)^\wedge_2)$ that is detected by $v_1^4$ and satisfies that $v_1^4\cdot \rho^4=\eta^4 \cdot \tau^4.$
\end{definition}

\begin{rmk}
	There are different choices of $\tau^4$ because of the existence of elements in higher filtration. However these different choices make no differences for our purpose. Once we fix the choice of $\tau^4$, the choice of $v_1^4$ is unique.
\end{rmk}	

The $E_\infty$-page organized by coweight shows an almost $4$-periodic pattern: if we slide the $(k+4)$-th page down left by $(8,4)$ and truncate, we see the same pattern as in the $k$-th page. In fact, this is almost $v_1^4$-periodicity in the sense of the following theorem.

\begin{thm}
	Multiplication by $v_1^4$ gives a homomorphism on homotopy groups:
	$$\pi_{s,w}((\kokq)^\wedge_2)\to \pi_{s+8,w+4}((\kokq)^\wedge_2)$$
	which is 
	\begin{enumerate}
	  	\item injective if $s-w=-4$.
 	 	\item zero if $s-w=-5$.
 	 	\item bijective otherwise.
	\end{enumerate}
\end{thm}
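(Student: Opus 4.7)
The plan is to prove the statement first on the $E_\infty$-page of the $C_2$-effective spectral sequence, and then lift it to $\pi_{*,*}((\kokq)^\wedge_2)$ by ruling out hidden $v_1^4$-extensions. Since $d_1(v_1^4) = 2 v_1^2 \cdot d_1(v_1^2) = 2\tau v_1^2 h_1^3 = 0$ (using $2h_1 = 0$), the class $v_1^4$ is a permanent cycle of tridegree $(8,4,4)$, so multiplication by $v_1^4$ is well defined on $E_\infty$ and shifts coweight $s-w$ by $+4$.

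First, I would verify the three-case statement on the $E_\infty$-page by inspecting the charts in Section \ref{subsec:charts} coweight by coweight. After the differentials of Propositions \ref{PCdiff} and \ref{NCdiff} eliminate $\tau^2$, $v_1^2$, and the negative cone classes forced by (\ref{equa:NCd1}), the surviving $E_\infty$ page is naturally a $\bbZ_2[v_1^4, \tau^4]$-module. Most of the positive cone and the bulk of the negative cone are free on generators over $\bbZ_2[v_1^4]$, giving $v_1^4$-bijectivity. The exceptional coweights $-4$ and $-5$ correspond to the negative cone generators $\NC{}{3}$ and $\NC{}{4}$, which sit at the boundary where the differentials (\ref{equa:NCd1}) first act nontrivially: in coweight $-4$ some classes in target coweight $0$ are not in the image of $v_1^4$-multiplication (yielding injective-but-not-bijective), while in coweight $-5$ the $v_1^4$-multiples of the surviving generators are themselves killed by a differential-induced relation (yielding the zero case).

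Second, I would lift the $E_\infty$-statement to $\pi_{*,*}$ by ruling out hidden $v_1^4$-extensions. Given $x \in \pi_{s,w}$ detected by $a \in E_\infty$, the product $v_1^4 \cdot x$ is detected by $v_1^4 \cdot a$ whenever the latter is nonzero; otherwise $v_1^4 \cdot x$ lies in strictly higher effective filtration and represents a possible hidden extension. To rule out such extensions I would combine: (a) the forgetful functor of Diagram (\ref{magicsquare}), which compares with the classical homotopy $\pi_*(\mathbf{ko})^\wedge_2$ where $v_1^4$-periodicity is Bott periodicity; (b) $\rho$-linearity together with the defining relation $v_1^4 \rho^4 = \eta^4 \tau^4$ of Definition \ref{deftau4v4}, which pins down the $v_1^4$-action on $\rho^4$-divisible classes; and (c) Lemma \ref{rholem}, which compares $C_2$-equivariant and classical data specifically for the analysis of hidden extensions.

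The main obstacle will be the hidden extension analysis in and around the exceptional coweights $-4$ and $-5$, where the negative cone of $(\HZcplt)_{*,*}$ meets the multiplicative relation (\ref{multirelationhomo}) produced by $\delta \colon \eta^2 \to \sqrt{\alpha}$. This is precisely the interaction that creates the non-$v_1^4$-periodic behavior, so the extension analysis cannot be reduced to underlying non-equivariant data alone; it requires a careful combination of the $\rho$-adic structure of $(\HZcplt)_{*,*}$ with the extension information extracted through the forgetful functor, class by class.
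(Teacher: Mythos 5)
Your outline is workable, but it is considerably heavier than what is needed, and the place you identify as ``the main obstacle'' is in fact not an obstacle at all: the paper's entire proof is the observation that the statement is already visible on the $E_\infty$-page and involves no hidden extensions. Concretely: (i) if multiplication by $v_1^4$ is injective (resp.\ bijective) on the $E_\infty$-page in a given range, this lifts formally to $\pi_{*,*}$ under the strong convergence of Theorem \ref{thm:koconverge} --- a nonzero class is detected by a nonzero $E_\infty$-element $a$, and $v_1^4\cdot a\neq 0$ then detects the product, so no extension analysis is required for cases (1) and (3); and (ii) the zero case $s-w=-5$ needs no hidden-extension argument either, because the target lies in coweight $-1\equiv 3\pmod 4$, where (as recorded in Section \ref{charts}) the $E_\infty$-page, and hence the homotopy, vanishes identically for coweights $\equiv 3 \pmod 4$ that are $\geq -1$. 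So ``zero on $E_\infty$'' lifts trivially here since there is literally nothing in the target for a hidden $v_1^4$-extension to hit. Your step involving the forgetful functor, Lemma \ref{rholem}, and a class-by-class analysis of hidden $v_1^4$-extensions near the relation (\ref{multirelationhomo}) is therefore unnecessary for this theorem; that kind of work is exactly what the paper does need, but for the $\tau^4$-periodicity statement (Theorem \ref{tauperiod}), whose proof genuinely rests on the hidden extensions of Propositions \ref{hext} and \ref{tauext11}.

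Two smaller points. Your description of the exceptional coweights on the $E_\infty$-page is somewhat garbled: the failure in coweight $-5$ is not that ``$v_1^4$-multiples of surviving generators are killed by a differential-induced relation'' --- there are no differentials after $E_1$, and the correct statement is simply that the target coweight $-1$ vanishes while the source coweight $-5$ (carried by classes built from $\NC{i}{4}$) does not; similarly, in coweight $-4$ the failure of surjectivity onto coweight $0$ comes from classes such as the unit (and low $\rho,h_1$-power classes) that are not $v_1^4$-divisible, while injectivity holds on the chart. Finally, your verification that $v_1^4$ is a permanent cycle via the Leibniz rule on $d_1$ is fine, but note the paper also needs (and proves) collapse at $E_2$, which disposes of higher differentials for all the classes you use.
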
	
	
\begin{proof}
	The above observation is already visible from the $E_\infty$-page and does not involve hidden extensions. 
\end{proof}	

In addition to the $v_1^4$-periodicity, the effective spectral sequence sees a $\tau^4$ periodicity in the sense of the following theorem. This has been observed in work of \cite[Theorem 11.15]{GHIR}. 

\begin{thm}
\label{tauperiod}
	Multiplication by $\tau^4$ gives a homomorphism on homotopy groups:
	$$\pi_{s,w}((\kokq)^\wedge_2)\to \pi_{s,w-4}((\kokq)^\wedge_2)$$
	which is 
	\begin{enumerate}
  \item injective if $s-w=-4$.
  \item zero if $s-w=-5$.
  \item bijective otherwise.
\end{enumerate}

\end{thm}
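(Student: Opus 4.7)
The plan is to mirror the proof of the $v_1^4$-periodicity theorem just above, namely to read off the $\tau^4$-action on the $E_\infty$-page from the charts in Section \ref{charts} and then promote the statement to homotopy by controlling hidden $\tau^4$-extensions. Since the spectral sequence has already been shown to collapse at $E_2$ in Section \ref{diff}, the computation on $E_\infty$ is essentially a bookkeeping exercise; the real content is in the two boundary coweights $s-w=-4$ and $s-w=-5$.

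First I would verify that $\tau^4$ is a permanent cycle, so that the homotopy class of Definition \ref{deftau4v4} induces a self-map $\tau^4\colon\kokq\to\Sigma^{0,-4}\kokq$ via the ring structure inherited from $\kq$. Proposition \ref{PCdiff} gives $d_1(\tau^2)=\rho^2\cdot\tau h_1$, and the Leibniz rule combined with $2h_1=0$ on the $E_1$-page yields $d_1(\tau^4)=2\tau^2\cdot d_1(\tau^2)=2\rho^2\tau^3 h_1=0$; no higher differentials are possible.

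Next I would inspect the action of $\tau^4$-multiplication on the $E_\infty$-page, coweight by coweight. The positive cone at $E_\infty$ is controlled by the generators $\rho$, $\tau^4$, $h_1$, $\tau h_1$, $v_1^4$ modulo the multiplicative relations of Proposition \ref{multiinHZ} and equation (\ref{multirelationhomo}), while the negative cone is the surviving $\NC{i}{j}$-summands together with their $h_1$- and $v_1^4$-multiples. Comparing the $(s,w)$-pattern in source coweight $c$ with target coweight $c+4$ directly from the charts of Section \ref{charts}, $\tau^4$-multiplication is seen to be a bijection on $E_\infty$ for every $c\notin\{-4,-5\}$. At coweight $-4$ the target coweight $0$ contains the classes $v_1^{4k}$ and their $h_1^{i}$-multiples together with $1$, none of which are $\tau^4$-divisible on $E_\infty$; this accounts for the failure of surjectivity but not of injectivity. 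At coweight $-5$ a short case check, using the twisted square relation of Example \ref{example:tauhsquare} to rewrite $\tau$-multiples of $h_1$-classes, shows that every class is already annihilated by $\tau^4$ on $E_\infty$.

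The remaining step, which I expect to be the main obstacle, is to rule out hidden $\tau^4$-extensions. In coweights outside $\{-4,-5\}$, bijectivity on $E_\infty$ forces any hidden extension to land in a filtration strictly below an existing isomorphism target, which is impossible. At the two boundary coweights the candidate targets are a finite explicit list read off from the charts, and here I would leverage the relation
\[
v_1^4\rho^4=\eta^4\tau^4
\]
from Definition \ref{deftau4v4} together with the preceding theorem, which established $v_1^4$-periodicity entirely at the level of $E_\infty$ without hidden extensions. A hypothetical hidden $\tau^4$-extension $\tau^4 x=y$, after multiplying by $\rho^4$, would produce a $v_1^4$-relation $v_1^4\rho^4 x=\rho^4 y$ that is incompatible with $v_1^4$-periodicity unless $\rho^4 y$ already equals $v_1^4\rho^4 x$ on $E_\infty$; a case-by-case inspection rules each candidate $y$ out. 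The delicate bookkeeping in these boundary coweights, where several $\rho$-torsion and $\eta$-torsion classes of distinct filtrations coexist, is the part of the argument most likely to require careful, chart-driven verification.
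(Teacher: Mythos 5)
Your central claim --- that multiplication by $\tau^4$ is already a bijection on the $E_\infty$-page for every coweight outside $\{-4,-5\}$, so that the only remaining task is to \emph{rule out} hidden $\tau^4$-extensions --- is false, and it inverts the actual content of the theorem. In every coweight congruent to $1$ or $2$ mod $4$ the $E_\infty$-page map is \emph{not} bijective: the negative-cone classes there (for instance $\NC{}{2}$ and $\NC{}{}h_1$, together with their $v_1^4$-multiples) are annihilated by $\tau^4$ on the $E_\infty$-page, while the positive-cone classes $\tau h_1$ and $(\tau h_1)^2$ (and their multiples) are not in the image of $E_\infty$-page $\tau^4$-multiplication. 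The theorem in these infinitely many coweights is therefore precisely the assertion that there \emph{are} hidden $\tau^4$-extensions --- from $\NC{}{2}$ to $\tau h_1$ and from $\NC{}{}h_1$ to $(\tau h_1)^2$, as in Proposition \ref{tauext11} --- which must be established, not excluded. Relatedly, you have misplaced the difficulty: the exceptional coweights $-4$ and $-5$ are the easy cases (in coweight $-5$ the target coweight $-1$ vanishes identically on $E_\infty$, and in coweight $-4$ the failure of surjectivity onto coweight $0$ is visible), whereas the genuine work is spread over all coweights $\equiv 1,2 \pmod 4$.

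Your proposed mechanism for the hard step also cannot succeed. Multiplying a hypothetical relation by $\rho^4$ and invoking $v_1^4\rho^4=\eta^4\tau^4$ only constrains $\rho$-torsion-free phenomena and, in any case, is a tool for excluding extensions; it gives no way to \emph{produce} the jump in filtration from a filtration-$0$ (or $1$) negative-cone class to a filtration-$1$ (or $2$) positive-cone class. The paper's route is genuinely different: it first establishes hidden $\rho$-extensions (Corollary \ref{rhocor}) using Lemma \ref{rholem}, i.e.\ the vanishing of the underlying nonequivariant group $\pi_3(\cpltt{\mathbf{ko}})$, then deduces hidden $\eta$-extensions (Proposition \ref{hext}) by combining these with the relations of Proposition \ref{multiinHZ} and the definitions of $\alpha$ and $v_1^4$, and only then obtains the hidden $\tau^4$-extensions of Proposition \ref{tauext11} by dividing by $\rho$; the coweights $\equiv 0,3 \pmod 4$ are handled by the non-hidden relations of Proposition \ref{multiinHZ}. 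Without some substitute for that comparison input (Lemma \ref{rholem} or knowledge imported from \cite{GHIR}), chart bookkeeping on the $E_\infty$-page alone cannot prove the theorem.
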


Part of the $\tau^4$ multiplication relation is already in the multiplicative structure in $\HZcplt_{*,*}$ shown in Proposition \ref{multiinHZ}. 
%
However, the rest is hidden in the $C_2$-effective $E_\infty$-page. To prove the $\tau^4$ periodicity theorem, we need to determine these hidden multiplicative relations. The following proof of Theorem \ref{tauperiod} uses propositions to be shown below in Section \ref{subsec:hiddenext}.
\begin{pf}
	The conclusion follows from Proposition \ref{tauext11}(2) in coweight $4k+1$ and from Proposition \ref{tauext11}(1) in coweight $4k+2$.
	The rest follows from Proposition \ref{multiinHZ}.
\end{pf}

Before discussing the hidden extensions, we first name some elements in homotopy. 

We consider the elements $\eta$ and $\rho$ in the homotopy of the $C_2$-equivariant sphere spectrum. The first Hopf map $\eta$ is in degree $(1,1)$. The element $\rho$ is in degree $(-1,-1)$ and is a lift of the element $\rho\in \HZcplt$ (see \cite{may2018structure}). Note that an element with the same name $\rho$ also exists in $\bbR$-motivic homotopy theory (see \cite{voevodsky2003mz}), and it realizes to the $C_2$-equivariant $\rho$.
 We abuse notation and use $\rho$ and $\eta$ to denote the images of these two elements under the map induced by the unit map $S^{0,0}\to \cpltt{\kokq}$. By definition, the $E_1$-page element $\rho$ detects the homotopy element $\rho.$

\begin{prop}
	The element $\eta\in \pi_{1,1}^{C_2}(\cpltt{\kokq})$ is detected by $h_1.$ 
\end{prop}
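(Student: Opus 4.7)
The plan is to pin down the filtration of the Hopf element $\eta$ by combining three inputs: a vanishing statement on the zeroth slice, a pigeonhole on the $E_\infty$-page in bidegree $(1,1)$, and a nonvanishing check via the forgetful functor.

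First I would show that $\eta$ is not detected in filtration $0$. The zeroth $C_2$-effective slice is $s_0(\kokq) \simeq \HZ$ by (\ref{koc2e1spec}), and $\eta$ acts trivially on every $\HZ$-module — this is the same fact used in the proof of Proposition \ref{etacplt}. Hence the image of $\eta$ under the unit map $S^{0,0} \to \cpltt{\kokq}$ projects to zero in $\pi_{1,1}^{C_2}(s_0(\kokq))$, so its detecting class must live in filtration at least $1$.

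Next I would inspect the $E_\infty$-page in bidegree $(s,w) = (1,1)$, i.e.\ coweight $0$ and stem $1$. Reading off from the generators in Table \ref{E1}, the class $h_1$ sits in tridegree $(1,1,1)$. By Proposition \ref{PCd1} none of the $d_1$-differentials hits or originates from $h_1$, so $h_1$ is a permanent cycle, and since the spectral sequence collapses at $E_2$ it survives to $E_\infty$. A routine chart inspection from Section \ref{charts} shows that $h_1$ is in fact the unique nonzero $E_\infty$-class in filtration $\geq 1$ at bidegree $(1,1)$. Consequently $\eta$ is detected either by $h_1$ or by zero.

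To rule out the zero case I would apply the forgetful functor $U$ in Diagram \ref{magicsquare}. By Corollary \ref{prop:redkqiskoc2} there is an equivalence $\cpltt{\kokq} \simeq \cpltt{\ko}$, whose underlying non-equivariant spectrum is the classical $\cpltt{\mathbf{ko}}$. The $C_2$-equivariant Hopf element $\eta \in \pi_{1,1}^{C_2}(\cpltt{\kokq})$ maps under $U$ to the classical Hopf element $\eta \in \pi_1(\cpltt{\mathbf{ko}})$, which is well-known to be nonzero (indeed a generator of $\mathbb{Z}/2$). Therefore $\eta \neq 0$ in $\pi_{1,1}^{C_2}(\cpltt{\kokq})$, and the only possibility is that $\eta$ is detected by $h_1$. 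The main obstacle is the chart-inspection step verifying the uniqueness of $h_1$ at bidegree $(1,1)$ in filtration $\geq 1$, but this is a direct read-off from the explicit $E_\infty$-page description and involves no hidden differentials.
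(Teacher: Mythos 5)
Your first and third steps are fine (indeed the filtration-$0$ exclusion is automatic, since $\pi_{1,1}(s_0(\kokq))=\pi_{1,1}(\HZcplt)=0$ for degree reasons, and the forgetful functor does show $\eta\neq 0$), but the pigeonhole step contains a genuine error: $h_1$ is \emph{not} the unique nonzero $E_\infty$-class in bidegree $(s,w)=(1,1)$ in filtration $\geq 1$. The classes $\rho^{k}h_1^{k+1}$, $k\geq 0$, all live in tridegree $(1,k+1,1)$; they are products of the permanent cycles $\rho$ and $h_1$, nothing maps to them under $d_1$ (the only candidates, multiples of $\tau h_1$, are themselves permanent cycles), and the spectral sequence collapses at $E_2$, so the whole infinite $\rho$--$h_1$ tower survives to $E_\infty$. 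This is exactly the phenomenon the paper points out in the discussion of $\omega$-extensions: the presence of $(\rho h_1)^n$ in higher filtration makes the class $2$ in degree $(0,0)$ detect infinitely many homotopy elements, and multiplying that tower by $h_1$ gives the tower above $h_1$ in bidegree $(1,1)$. Consequently, knowing only that $U(\eta)\neq 0$ in $\pi_1(\cpltt{\mathbf{ko}})$ tells you that $\eta$ is detected by \emph{some} nonzero class in filtration $\geq 1$, but it does not rule out detection by $\rho^{k}h_1^{k+1}$ for some $k\geq 1$; your argument therefore does not pin the filtration to $1$. (One could try to repair this by showing that every class detected in filtration $\geq 2$ at $(1,1)$ is $\rho$-divisible and then invoking Lemma \ref{rholem}, since $U(\eta)\neq 0$ forbids $\rho$-divisibility of $\eta$; but that surjectivity-of-$\rho$ statement is an additional argument you have not supplied.)

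For comparison, the paper avoids this issue entirely by importing the filtration of $\eta$ from a known spectral sequence: by \cite{levine2015ANSS} the $\bbC$-motivic effective slice spectral sequence agrees, after reindexing, with the classical Adams--Novikov spectral sequence, where $\eta$ is detected by $h_1$ in filtration $1$; compatibility of the effective towers with the functors in Diagram (\ref{magicsquare}) then transports this detection statement to the $\bbR$-motivic and $C_2$-equivariant effective spectral sequences. If you want an internal argument instead, you must add something that distinguishes filtration $1$ from filtration $\geq 2$, not merely nonvanishing.
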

\begin{proof}
  We show the $\bbC$-motivic analogue is true.
  
	In $\bbC$-motivic homotopy theory, work of \cite{levine2015ANSS} shows that the $\bbC$-motivic effective slice spectral sequence realizes to the classical Adams-Novikov spectral sequence after re-indexing. As a consequence, the $\bbC$-motivic $\eta$ is detected by $h_1$ in the $\bbC$-motivic effective slice spectral sequence. By Diagram (\ref{magicsquare}), the same result holds in $\bbR$-motivic homotopy theory and $C_2$-equivariant homotopy theory.
\end{proof}
%
%

\subsection{Hidden extensions and periodicity}\hfill
\label{subsec:hiddenext}

In this section, we analyze the relevant hidden extensions needed for Theorem \ref{tauperiod}. Most of the arguments in the proofs of this section are easier to see when accompanied by charts in Section \ref{subsec:charts}. We suggest readers refer to the charts while reading the proofs in this subsection.

First we analyze the hidden $\rho$ extensions. The following well-known lemma is useful when analying $\rho$ extensions. One proof can be found in \cite{GHIR}.

\begin{lemma}
\label{rholem}
Let $X$ be a $C_2$-equivariant spectrum, and let $\chi$ belong to
$\pi_{n,k}(X)$. Let $U$ denote the forgetful functor from $\SH_{C_2}$ to $\SH.$ The element $\chi$ is divisible by $\rho$ if and only if its underlying class $U(\chi)$ in
$\pi_{n}(U(X))$ is zero.
\end{lemma}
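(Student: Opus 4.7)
The plan is to derive the lemma from the fundamental cofiber sequence of $C_2$-spectra
$$(C_2)_+ \to S^{0,0} \xrightarrow{\rho} S^{1,1},$$
whose first map is the augmentation and which expresses the equivalence $S^\sigma/S^0 \simeq \Sigma(C_2)_+$. Smashing with $X$ and taking $\pi_{*,*}$ produces a long exact sequence whose relevant segment reads
$$\cdots \to \pi_{n+1,k+1}(X) \xrightarrow{\rho} \pi_{n,k}(X) \xrightarrow{\partial} \pi_{n,k+1}(X \wedge (C_2)_+) \to \cdots,$$
after identifying the homotopy of $\Sigma^{1,1}X$ with a bidegree shift of the homotopy of $X$. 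By exactness, $\chi \in \pi_{n,k}(X)$ is divisible by $\rho$ if and only if $\partial \chi = 0$.

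The next step is to identify $\pi_{*,*}(X \wedge (C_2)_+)$ with $\pi_*(U(X))$, independent of the weight. This follows from the untwisting equivalence $(C_2)_+ \wedge Y \simeq (C_2)_+ \wedge U(Y)$ of $C_2$-spectra, which shows that $X \wedge (C_2)_+$ is the $C_2$-induction of $U(X)$, combined with the induction/restriction adjunction $(C_2)_+ \wedge (-) \dashv U$. Under this identification I would check that $\partial$ is, up to sign, the forgetful map $U$. Concretely, $\partial$ is induced by smashing $X$ with the Puppe boundary $S^{1,1} \to \Sigma(C_2)_+$, and the adjunction above translates post-composition with this boundary into the forgetful functor applied to a class in $\pi_{n,k}(X)$.

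Combining the two identifications, exactness of the long exact sequence immediately yields the lemma: $\chi \in \pi_{n,k}(X)$ lies in the image of multiplication by $\rho$ precisely when $U(\chi) = 0$ in $\pi_n(U(X))$. The only real obstacle is pinning down the identification $\partial = \pm U$, which is a formal consequence of the adjunction together with some careful bookkeeping with bidegree shifts. Since this is a well-known computation with a complete proof available in \cite{GHIR}, a brief argument along these lines suffices here.
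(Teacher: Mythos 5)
Your argument is correct and is essentially the proof the paper relies on: the paper does not prove Lemma \ref{rholem} itself but cites \cite{GHIR}, whose argument is exactly this long exact sequence for $(C_2)_+\to S^{0,0}\xrightarrow{\rho}S^{1,1}$ together with the identifications $\pi_{*,*}^{C_2}((C_2)_+\wedge X)\cong \pi_*(U(X))$ and of the boundary map with the forgetful map. One small point to tighten: to compute maps \emph{into} $(C_2)_+\wedge U(X)$ you need coinduction, i.e. the Wirthm\"uller equivalence $(C_2)_+\wedge U(X)\simeq F((C_2)_+,U(X))$, rather than the left adjunction $(C_2)_+\wedge(-)\dashv U$ as you state it; for the finite group $C_2$ the two adjoints agree, so the identification (and hence your proof) goes through.
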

%
%
%

As a corollary of the lemma, we have the following:
\begin{cor} 
\label{rhocor}
There are hidden $\rho$ extensions
\begin{enumerate}
  \item  from $\NCt{} v_1^2$ to $h_1^3.$ 
  \item  from $2 v_1^2 $ to $ (\tau h_1)^2\cdot h_1.$
\end{enumerate}

\end{cor}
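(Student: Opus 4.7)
The approach for both parts is to combine Lemma \ref{rholem} with the classical vanishing $\pi_3(\mathbf{ko}^\wedge_2)=0$. Since the underlying spectrum of $(\kokq)^\wedge_2$ is $\mathbf{ko}^\wedge_2$, every class in $\pi_{3,w}((\kokq)^\wedge_2)$ has vanishing underlying image and is therefore $\rho$-divisible by Lemma \ref{rholem}. In each case the target $E_\infty$-class sits in stem $3$, and the claimed hidden extension will be obtained by identifying the $\rho$-preimage's detection via a filtration inspection of the chart at the relevant bidegree.

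For part (1), I would take $Y=\eta^3\in\pi_{3,3}((\kokq)^\wedge_2)$, which is detected by $h_1^3$ in filtration $3$. By Lemma \ref{rholem} we get $\eta^3=\rho B$ for some $B\in\pi_{4,4}$, and since $\rho$ lives in filtration $0$ the filtration of $B$ is at most $3$. Inspecting the $E_\infty$-page at bidegree $(4,4)$ (Section \ref{subsec:charts}), the only $E_\infty$-class in filtration at most $3$ is $\NCt{}\,v_1^2$ in filtration $2$; hence $B$ is detected by $\NCt{}\,v_1^2$. The relation $\rho\cdot\NCt{}\,v_1^2=0$ on $E_\infty$ is immediate because $\rho\cdot\NCt{}$ would lie in a bidegree of $(\HZcplt)_{*,*}$ that is not populated by any negative-cone generator, so it already vanishes in the coefficient ring.

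For part (2), Lemma \ref{rholem} applied to any lift $Y\in\pi_{3,1}$ of $(\tau h_1)^2 h_1$ again supplies $X\in\pi_{4,2}$ with $\rho X=Y$, and the filtration of $X$ is at most $3$. The essential input is the $d_1$ differential $d_1(v_1^2)=\tau h_1\cdot h_1^2$ from Proposition \ref{PCd1}, which kills $v_1^2$ itself on $E_2$, while $2v_1^2$ survives because $d_1(2v_1^2)=2\tau h_1\cdot h_1^2=0$ in $(\HZt)_{*,*}$. Consequently the unique $E_\infty$-class at $(s,w)=(4,2)$ in filtration at most $3$ is $2v_1^2$, so $X$ must be detected by $2v_1^2$. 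The vanishing $\rho\cdot 2v_1^2=0$ on $E_\infty$ follows from $2\rho=0$ in $(\HZcplt)_{*,*}$.

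The routine bookkeeping is reading off $E_\infty$-classes from the charts in Section \ref{subsec:charts}. The only substantive step is in part (2): the need to use the $d_1$ differential on $v_1^2$ to distinguish the two \emph{a priori} filtration-$2$ candidates and force the preimage to be detected by $2v_1^2$ rather than by $v_1^2$ itself.
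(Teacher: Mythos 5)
Your proposal is correct and follows essentially the same route as the paper: both deduce $\rho$-divisibility of the classes detected by $h_1^3$ and $(\tau h_1)^2\cdot h_1$ from Lemma \ref{rholem} together with $\pi_3(\mathbf{ko}^\wedge_2)=0$, and then identify the detecting classes of the $\rho$-preimages by inspecting the $E_\infty$-charts. The only difference is that you make the "inspection" explicit via the filtration bound on the preimage and the observation that $v_1^2$ dies by $d_1(v_1^2)=\tau h_1\cdot h_1^2$ while $2v_1^2$ survives, which is exactly the bookkeeping the paper leaves implicit.
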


\begin{pf}
The elements $h_1^3$ and $(\tau h_1)^2\cdot h_1$ detect homotopy elements whose underlying images are in  $\pi_3(\cpltt{\mathbf{ko}})=0.$
By Lemma \ref{rholem}, these homotopy elements are divisible by $\rho.$ The results follow by inspection (see the coweight $0$ chart on Page \pageref{fig:sample} for $(1)$, and the coweight $2$ PC chart on Page \pageref{fig:E22} for $(2)$).
\end{pf}

\begin{rmk}
\label{multirmk}
These two hidden $\rho$ extensions generate other hidden $\rho$ extensions after $\tau^4$ and $v_1^4$ multiplications (e.g., from $2\tau^2 v_1^2$ to $\tau^4 h_1^3$ and from $2 v_1^6 $ to $ (\tau h_1)^2\cdot h_1v_1^4$). In addition, the first one generates more $\rho$ extensions after $h_1$ multiplication (e.g., from $\NCt{} h_1v_1^2$ to $h_1^4$).
\end{rmk}

\begin{definition}
\label{alphadef}
Let $\alpha$ be the element in $\pi^{C_2}_{4,4}(\cpltt{\kokq})$ detected by $\NCt{}v_1^2$ such that $\rho\cdot\alpha=\eta^3$. 
\end{definition}

\begin{rmk}\mbox{}

\label{tauext1}
\begin{enumerate}
	\item There is a unique choice of $\alpha$ because of the relation that $\rho \cdot \alpha =\eta^3$. There are no possible error terms in higher filtrations.
	\item The element $\tau^4\alpha$ is detected by $\NCt{}v_1^2\cdot \tau^4=2\tau^2 v_1^2$ (See Proposition \ref{multiinHZ}(2)).
\end{enumerate}
\end{rmk}

\begin{prop}
\label{hext}
There are hidden $\eta$ extensions:
\begin{enumerate}
  \item from $2\tau^{2} $ to $ \rho(\tau h_1)^2.$
  \item from $2\tau^{2} v_1^2$ to $ \rho^3 v_1^{4}.$
\end{enumerate}
\end{prop}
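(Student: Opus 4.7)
My plan is to establish both hidden $\eta$-extensions by combining three identities already available in the computation: $\rho\alpha=\eta^{3}$ (Definition~\ref{alphadef}), $2\tau^{2}v_1^{2}=\tau^{4}\alpha$ (Remark~\ref{tauext1}(2)), and $\tau^{4}\eta^{4}=\rho^{4}v_1^{4}$ (Definition~\ref{deftau4v4}). In each case I will multiply the proposed $\eta$-extension by $\rho$, reduce the right-hand side to something known in homotopy, and then appeal to inspection of the $E_\infty$-page to promote the resulting identity (modulo $\rho$-torsion) to a genuine detection in the spectral sequence.

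For part~(2), the chain of identities gives
\[
\rho\cdot\bigl(\eta\cdot 2\tau^{2}v_1^{2}\bigr)
\;=\;\eta\tau^{4}\cdot\rho\alpha
\;=\;\tau^{4}\eta^{4}
\;=\;\rho^{4}v_1^{4}
\;=\;\rho\cdot\rho^{3}v_1^{4}.
\]
Hence $\eta\cdot 2\tau^{2}v_1^{2}$ agrees with $\rho^{3}v_1^{4}$ up to a $\rho$-torsion class in filtration strictly greater than $4$. Inspection of the coweight~$4$ chart in Section~\ref{subsec:charts} at bidegree $(5,1)$ shows that no such correction class is present, yielding the claimed detection.

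I plan to deduce part~(1) from part~(2) via $v_1^{2}$-multiplication. Lemma~\ref{rholem} applied to the underlying image $2\eta\in\pi_{1}(\mathbf{ko}^{\wedge}_{2})=0$ shows that $\eta\cdot 2\tau^{2}$ is $\rho$-divisible, and the relation $2h_1=0$ on $E_1$ rules out a filtration~$1$ detector, so the detector must live in filtration at least~$2$. If $E$ denotes this detector, then $v_1^{2}E$ detects $\eta\cdot 2\tau^{2}v_1^{2}$, which by part~(2) equals $\rho^{3}v_1^{4}$. Using Example~\ref{example:tauhsquare} to expand $\rho(\tau h_1)^{2}=\rho\tau^{2}h_1^{2}+\rho^{3}v_1^{2}$, a short computation with the $d_1$-differentials of Proposition~\ref{PCd1} shows that both $\rho^{3}v_1^{2}$ and $\rho\tau^{2}h_1^{2}v_1^{2}$ support nontrivial $d_1$-differentials and hence vanish on $E_\infty$. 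Thus $v_1^{2}\cdot\rho(\tau h_1)^{2}\equiv\rho^{3}v_1^{4}$ on $E_\infty$, which matches the expected detection, and a short inspection of bidegree $(1,-1)$ in the coweight~$2$ chart rules out any alternative higher-filtration detector.

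The hardest part will be the bookkeeping on the $E_\infty$-page, namely ruling out stray higher-filtration $\rho$-torsion permanent cycles in bidegrees $(5,1)$ and $(1,-1)$ that could act as correction terms. This is routine but requires careful tracking of which negative-cone generators from the descriptions of $(\HZcplt)_{*,*}$ and $(\HZt)_{*,*}$ survive the $d_1$-differentials of Propositions~\ref{PCd1} and~\ref{NCdiff}, together with the module relations noted in Example~\ref{example:tauhsquare} and Corollary~\ref{rhocor}.
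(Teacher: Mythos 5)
Your argument for part (2) is essentially the paper's own proof: combine $\rho\alpha=\eta^3$, the fact that $\tau^4\alpha$ is detected by $2\tau^2v_1^2$, and $\tau^4\eta^4=\rho^4v_1^4$, then divide by $\rho$; the extra care about higher-filtration correction terms is fine and harmless.

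Part (1), however, has a genuine gap. Your deduction of (1) from (2) hinges on the step ``if $E$ detects $\eta\cdot 2\tau^2$, then $v_1^2E$ detects $\eta\cdot 2\tau^2v_1^2$.'' But $v_1^2$ is not a permanent cycle --- Proposition \ref{PCd1} gives $d_1(v_1^2)=\tau h_1\cdot h_1^2$ --- so there is neither a class $v_1^2$ on the $E_\infty$-page nor a homotopy element $v_1^2$ in $\pi_{4,2}(\cpltt{\kokq})$ by which you could multiply. The homotopy class detected by $2\tau^2v_1^2$ is $\tau^4\alpha$ (Remark \ref{tauext1}), and it is not obtained from the class detected by $2\tau^2$ by multiplication with anything in degree $(4,2)$; so knowing where $\eta\cdot\tau^4\alpha$ is detected tells you nothing directly about $\eta$ times the class detected by $2\tau^2$. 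In particular, your argument never shows that $\eta\cdot 2\tau^2$ is nonzero, which is the real content of (1): Lemma \ref{rholem} only gives $\rho$-divisibility, and the subsequent manipulation of non-surviving $E_1$-classes ($\rho^3v_1^2$, $\rho\tau^2h_1^2v_1^2$) cannot be promoted to a statement ``on $E_\infty$.'' The paper proves (1) by a different mechanism: it uses the negative-cone classes $\mu$ (detected by $\NCt{}$) and $\nu$ (detected by $\NC{2}{3}(\tau h_1)^2$), the $E_\infty$-relation $\rho^2\cdot\NC{2}{3}(\tau h_1)^2=h_1^2\cdot\NCt{}$, hence $\rho^2\nu=\eta^2\mu$ in homotopy, then multiplies by $\tau^4$ and invokes Proposition \ref{multiinHZ} together with the hidden $\rho$-extension from $2v_1^2$ to $(\tau h_1)^2h_1$ (Corollary \ref{rhocor}(2)) to identify the detector of $\eta^2\mu\tau^4$, from which the detector of $\eta\mu\tau^4$ follows. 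Some such input beyond part (2) is needed; as written, your part (1) does not go through.
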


\begin{proof}\mbox{}
The results follow by comparing the positive cone and the negative cone elements. The relevant charts are in coweight $2$ (Page \pageref{fig:E22}), and in coweight $0$ (Page \pageref{fig:E24}, \pageref{fig:sample}).
\begin{enumerate}
	\item  

	We use $\mu$ to denote the homotopy element detected by $\NCt{}$ and use $\nu$ to denote the homotopy element detected by $\NC{2}{3}(\tau h_1)^2$. The multiplicative relation (\ref{multirelationhomo}) is involved here. Note that $\NC{2}{3}(\tau h_1)^2$ equals $\NC{2}{}h_1^2+\NCt{3}v_1^2$ (see Example \ref{example:tauhsquare}).
	
		We have the relation in the $E_\infty$-page that $$\rho^2\cdot \NC{2}{3}(\tau h_1)^2= h_1^2\cdot \NCt{}.$$ 
	Therefore in homotopy, we have $\rho^2 \nu = \eta^2 \mu .$ After multiplication by $\tau^4$, we have $$\rho^2 \nu\tau^4=\eta^2 \mu \tau^4.$$
	
	By Proposition \ref{multiinHZ}, the element $\nu  \tau^4$ is detected by $\NC{2}{3}(\tau h_1)^2\cdot\tau^4=2v^2_1.$ Therefore, 
	the element $\rho^2  \nu  \tau^4$ is detected by $(\tau h_1)^2 h_1\cdot \rho$ by the hidden $\rho$-extension in Proposition \ref{rhocor}.
	
	On the other hand, the element $\mu \tau^4$ is detected by $\NCt{}\cdot \tau^4=2\tau^2.$ The result follows.
	\item We have the following relation in homotopy by the definitions of $\alpha$ and $v_1^4$:
  	$$\eta \tau^4 \alpha \rho=\tau^4 \eta^4=v_1^4 \rho^4.$$ 
  	Therefore, we have that $\eta\tau^4 \alpha$ is detected by $\rho^3   v_1^4.$
  	By Remark \ref{tauext1} (2), the element $\tau^4\alpha$ is detected by $2\tau^2v_1^2.$ The result follows.
\end{enumerate}	
\end{proof}

\begin{prop}
\label{tauext11}
There are hidden $\tau^4$ extensions 
\begin{enumerate}
	\item from $\NC{}{}h_1$ to $(\tau h_1)^2$.
	\item from $\NC{}{2}$ to $\tau h_1$.
\end{enumerate}

\end{prop}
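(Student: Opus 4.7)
The plan is to adapt the method used in the proof of Proposition \ref{hext}: identify an identity on the $E_\infty$-page that links the source to a known class, lift that identity to homotopy, multiply by $\tau^4$, and then use the relation $\NCt{}\cdot\tau^4=2\tau^2$ (a consequence of Proposition \ref{multiinHZ}(2), as already exploited in Remark \ref{tauext1}(2)) to pin down what $\tau^4$ times the source class detects.

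For part (2), I would let $X$ be a homotopy class detected by $\NC{}{2}$. The algebraic identity $\NC{}{2}\cdot\tau\rho=\NCt{}$ recalled in Section \ref{EM1} already holds at filtration $0$ on the $E_\infty$-page, so it lifts to a homotopy identity $\tau\rho\cdot X=\mu$, with $\mu$ a class detecting $\NCt{}$, provided no higher-filtration error terms are possible in that bidegree (which I would verify against the appropriate coweight chart). Multiplying through by $\tau^4$ gives $\tau\rho\cdot(\tau^4 X)=\tau^4\mu$, and by the argument of Remark \ref{tauext1}(2) the right-hand side is detected by $2\tau^2$. Reading off the $E_\infty$-page in the appropriate coweight, $\tau h_1$ should be the unique candidate in the correct bidegree whose $\tau\rho$-multiple can detect $2\tau^2$, thereby identifying the target of the hidden $\tau^4$-extension as $\tau h_1$.

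For part (1), let $X'$ be detected by $\NC{}{}h_1$. I would run a parallel argument at one filtration higher. The natural input is Example \ref{example:tauhsquare}, which gives $(\tau h_1)^2=\tau^2 h_1^2+\rho^2 v_1^2$ in the $E_\infty$-page, combined with the hidden $\rho$-extension from $\NCt{}v_1^2$ to $h_1^3$ established in Corollary \ref{rhocor}(1) and its $h_1$-multiple noted in Remark \ref{multirmk}. These together allow one to compute $\tau^4\cdot X'$ modulo higher filtration and see that, among classes in the correct bidegree, $(\tau h_1)^2$ is the only one consistent with the computed $\rho$- and $\eta$-multiples; this pins down the target of the hidden $\tau^4$-extension from $\NC{}{}h_1$ as $(\tau h_1)^2$.

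The main obstacle in both parts will be controlling possible higher-filtration error terms that appear when lifting $E_\infty$-relations to homotopy, and verifying the uniqueness of the claimed target in its bidegree. In our setting both tasks reduce to finite degree-by-degree inspections of the coweight $E_\infty$-charts in Section \ref{charts}: Lemma \ref{rholem} together with the hidden $\rho$- and $\eta$-extensions already established in Corollary \ref{rhocor} and Proposition \ref{hext} should suffice to eliminate alternative detections, leaving only the asserted targets.
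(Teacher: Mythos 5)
Your part (2) contains a genuine gap: the multiplication by $\tau\rho$ that it hinges on does not exist. The relation $\NC{}{2}\cdot\tau\rho=\NCt{}$ from Subsection \ref{EM1} is a relation in $(\HZt)_{*,*}$, and $(\HZt)_{*,*}$ only enters the $E_1$-page in filtrations $\geq 1$ (the $h_1$-divisible summands). Filtration $0$, where the source $\NC{}{2}$ of extension (2) lives, has coefficients $(\HZcplt)_{*,*}$, and there neither $\tau$ nor $\tau\rho$ survives the $2$-Bockstein spectral sequence (Proposition \ref{2Bdiff} gives $d_1(\tau)=t\rho$, and the remark following it gives $d_1(\rho\tau)=t\rho^{2}$); indeed $(\HZcplt)_{0,-1}=(\HZcplt)_{-1,-2}=0$ by the formula in Subsection \ref{EM2}. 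So the identity does \emph{not} ``hold at filtration $0$ on the $E_\infty$-page.'' Worse, there is no homotopy class to multiply by: in degree $(-1,-2)$ the only $E_1$-classes are $\rho^{p+1}\tau h_1^{p}$, all killed by $d_1$ (e.g.\ $d_1(\tau^2)=\rho^{2}\tau h_1$), so $\pi_{-1,-2}(\cpltt{\kokq})=0$ and your proposed lift $\tau\rho\cdot X=\mu$ would force $\mu=0$, contradicting $\mu\neq 0$. Your sketch of part (1) inherits the problem of being ``parallel'' to this: the ingredients you foreground (Example \ref{example:tauhsquare}, Corollary \ref{rhocor}(1) and its $h_1$-multiple) do not by themselves determine the $\rho$- or $\eta$-multiples of $\tau^4X'$, so the ``only consistent candidate'' step is not actually enabled by what you cite, although your closing appeal to Proposition \ref{hext} points at the right tool.

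The repair is to reverse the order and to replace $\tau\rho$ by classes that exist, which is what the paper does. For (1): use the filtration-$1$ identity $\rho\cdot\NC{}{}h_1=\NCt{}\cdot h_1$, so $\rho X'=\eta\mu$ with $\mu$ detected by $\NCt{}$; then $\tau^4\mu$ is detected by $\tau^4\cdot\NCt{}=2\tau^{2}$ (Proposition \ref{multiinHZ}(2)), and the hidden $\eta$-extension of Proposition \ref{hext}(1) shows $\rho\cdot\tau^4X'=\eta\cdot\tau^4\mu$ is detected by $\rho(\tau h_1)^{2}$; dividing by $\rho$, with an inspection of the coweight-$2$ chart to rule out ambiguity, gives that $\tau^4X'$ is detected by $(\tau h_1)^{2}$. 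For (2): multiply by the genuine homotopy class $\tau\eta$ detected by $\tau h_1$ instead of the nonexistent $\tau\rho$; since $\NC{}{2}\cdot\tau h_1=\NC{}{}h_1$, part (1) then forces $\tau^{4}$ times the class detected by $\NC{}{2}$ to be detected by $\tau h_1$. Note also that your intended uniqueness checks via Lemma \ref{rholem} and Corollary \ref{rhocor} are fine as auxiliary chart inspections, but they cannot substitute for the missing multiplicative link between the source and a class whose $\tau^4$-multiple is already known.
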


\begin{pf}\mbox{}
The relevant charts are in coweight $2$ (Page \pageref{fig:E22}) and coweight $1$ (Page \pageref{fig:E21}).
\begin{enumerate}
\item By Proposition \ref{hext} (1), there is an $\eta$ extension from $\tau^4 \cdot \NCt{}=2\tau^2$ to $\rho(\tau h_1)^2$. Since we have $\NC{}{}h_1 \cdot \rho=\NCt{} \cdot h_1$, we get a $\tau^4$ extension from $\NC{}{}h_1\cdot \rho$ to $\rho(\tau h_1)^2$. The result follows by dividing by $\rho$ and inspection. 

\item Since $\NC{}{2}\cdot \tau h_1=\NC{}{}h_1,$ the result follows by the $\tau^4$ extension in (1).
\end{enumerate}	
\end{pf}

\begin{rmk}
Similarly to Remark \ref{multirmk}, we obtain more $\eta$ extensions by multiplying the extensions in Proposition \ref{hext} by powers of $\tau^4$ or $v_1^4$, and more $\tau^4$ extensions by multiplying the extensions in Proposition \ref{tauext11} by powers of $v_1^4$.
\end{rmk}

Propositions \ref{tauext11} and \ref{multiinHZ} together prove Theorem \ref{tauperiod}.\\

	Now we have shown two periodicities. The $v_1^4$ and $\tau^4$ periodicities together give an $8$-periodic pattern in each fixed coweight. This is an $(8,8)$-periodicity in homotopy. It can be made precise by considering the following homotopy element.

\begin{defn}
	Let $\beta\in \pi_{8,8}(\cpltt{\kokq})$ be the element detected by $\NC{3}{}v_1^2 h_1^2$ such that $\tau^4\beta=v_1^4.$
\end{defn}

\begin{prop}
\label{alphabetarelation}
	There are relations that 
	\begin{enumerate}
		\item $\rho^3\beta=\eta\alpha.$
		\item $\alpha^2=4\beta.$
	\end{enumerate}
\end{prop}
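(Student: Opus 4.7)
My plan is to establish both identities by multiplying through by $\tau^4$, comparing the two sides at the level of $E_\infty$-detections, and then invoking $\tau^4$-injectivity from Theorem \ref{tauperiod}. Both $\rho^3\beta - \eta\alpha$ (in $\pi_{5,5}$) and $\alpha^2 - 4\beta$ (in $\pi_{8,8}$) live in coweights where $s-w = 0$, so $\tau^4$-multiplication is bijective there and equality after multiplication by $\tau^4$ will suffice.

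For (1), the left-hand side satisfies $\tau^4(\rho^3\beta) = \rho^3(\tau^4\beta) = \rho^3 v_1^4$ directly from the defining relation $\tau^4\beta = v_1^4$. For the right-hand side, Remark \ref{tauext1}(2) identifies $\tau^4\alpha$ as an element detected by $2\tau^2 v_1^2$, and the hidden $\eta$-extension from $2\tau^2 v_1^2$ to $\rho^3 v_1^4$ supplied by Proposition \ref{hext}(2) shows $\eta\cdot\tau^4\alpha$ is also detected by $\rho^3 v_1^4$. Provided the coweight $1$ chart has no $E_\infty$-classes of filtration strictly above that of $\rho^3 v_1^4$ in the relevant tridegree, the two sides agree as homotopy elements, and $\tau^4$-injectivity on $\pi_{5,5}$ then delivers (1).

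For (2), I apply the same strategy in $\pi_{8,4}$. The right-hand side $\tau^4(4\beta) = 4 v_1^4$ is immediate. The left-hand side $\tau^4(\alpha^2) = (\tau^4\alpha)\cdot\alpha$ is detected by the $E_\infty$-product $(2\tau^2 v_1^2)\cdot(\NCt{}\, v_1^2) = 2\tau^2\NCt{}\, v_1^4$, and the key computation is that $\tau^2\NCt{} = 2$ in $(\HZcplt)_{*,*}$ by Proposition \ref{multiinHZ}(2), yielding $4 v_1^4$. Again, provided there are no higher-filtration classes available in the coweight $4$ chart at this position, the two sides coincide and $\tau^4$-injectivity on $\pi_{8,8}$ finishes (2).

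The principal obstacle in both parts is ruling out higher-filtration ambiguity in the target tridegrees $\pi_{5,1}$ and $\pi_{8,4}$; these should be immediate inspections of the $E_\infty$-charts in Section \ref{subsec:charts}, but if stray classes appear above $\rho^3 v_1^4$ or $4 v_1^4$, a correction must be analyzed. A fallback for such a contingency would be to pin down the correction via $\rho$-multiplication: for (1), $\rho(\rho^3\beta) = \rho^4\beta$ agrees with $\rho(\eta\alpha) = \eta^4$ after applying the defining property of $v_1^4$ together with $\tau^4$-injectivity on $\pi_{4,4}$, which localizes the possible discrepancy to the $\rho$-torsion summand of $\pi_{5,5}$; for (2), one can then multiply (1) by $\alpha$ to constrain $\alpha^2$ modulo known extensions. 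These supplementary $\rho$- and $\eta$-multiplication arguments are the most delicate step I expect to write out in detail.
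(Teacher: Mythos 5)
Your primary argument for both parts does not go through, because the vanishing hypothesis it rests on is false. After multiplying by $\tau^4$ the comparison takes place in tridegrees $(s,w)=(5,1)$ and $(8,4)$ (these have coweight $4$, not coweight $1$), and in both positions the $E_\infty$-page is nonzero in filtration strictly greater than $4$. At $(5,1)$ the class $\tau^4 h_1^5$ is a permanent cycle; the only $d_1$ hitting this tridegree comes from $\tau^3h_1^2v_1^2$, whose image is the sum $\tau^4h_1^5+\rho^4h_1v_1^4$ (note $(\tau h_1)^2=\tau^2h_1^2+\rho^2v_1^2$), so a nonzero class in filtration $5$ survives to $E_\infty$ (it detects $\tau^4\eta^5=\rho^4\eta v_1^4$). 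At $(8,4)$ one has, for instance, the surviving class $\rho h_1 v_1^4$ in filtration $5$. Consequently "detected by the same $E_\infty$-class" only yields the desired identities modulo higher filtration, and $\tau^4$-injectivity cannot yet be invoked; moreover, passing to the $\tau^4$-multiplied degrees gains nothing over arguing directly in $(5,5)$ and $(8,8)$, where the same higher-filtration ambiguity is present.

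Your fallback for (1) is essentially the paper's actual proof: from $\tau^4\beta=v_1^4$ and $\rho^4v_1^4=\tau^4\eta^4$ one gets $\rho^4\beta=\eta^4=\rho\cdot\eta\alpha$ (using $\rho\alpha=\eta^3$), hence $\rho(\rho^3\beta-\eta\alpha)=0$; but you stop at "localizes the discrepancy to the $\rho$-torsion of $\pi_{5,5}$" without showing that this torsion vanishes, and that vanishing is precisely the remaining content. It is a chart fact which relies on the hidden $\rho$-extension from $\NCt{}h_1v_1^2$ to $h_1^4$ (Corollary \ref{rhocor}(1) together with Remark \ref{multirmk}), showing the one low-filtration class at $(5,5)$ is not $\rho$-torsion, and on inspecting the $\eta$-tower classes there. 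For (2), "multiply (1) by $\alpha$" is not developed and does not by itself produce $\alpha^2=4\beta$. The paper argues instead: $\alpha^2$ and $4\beta$ are detected by the same filtration-$4$ class (a multiple of $\NCt{3}v_1^4$, via Proposition \ref{multiinHZ}), so $\Delta=4\beta-\alpha^2$ lies in filtration $\geq 5$ at $(8,8)$; then (1) combined with $4\eta=\rho^2\eta^3$ (from $\omega\eta=0$) and $\eta^3=\rho\alpha$ gives $4\rho^3\beta=\rho^3\alpha^2$, i.e.\ $\rho^3\Delta=0$; and the absence of $\rho$-torsion in filtrations $\geq 5$ at $(8,8)$ forces $\Delta=0$. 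These $\rho$-torsion verifications on the $E_\infty$-charts are the real substance of the proposition, and your proposal leaves them unestablished, so as written there is a genuine gap.
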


\begin{proof}
We refer the reader to the coweight $0$ chart on Page \pageref{fig:sample}.

	\begin{enumerate}
		\item By the definition of $v_1^4$, we have $\tau^4\beta\cdot \rho^4=v_1^4\cdot \rho^4=\tau^4\eta^4.$ Therefore 
		$\beta\cdot \rho^4=\eta^4=\eta\cdot\rho\cdot\alpha.$ Since there is no $\rho$-torsion in degree $(5,5)$, we have that $\rho^3\beta=\eta \alpha.$
		\item By Proposition \ref{multiinHZ}, the element $\alpha^2$ is detected by 
	$$\left(\NCt{} v_1^2\right)^2=2\NCt{3}v_1^4,$$ 
	which is in filtration $4$.
Meanwhile, the element $4v_1^4=4\tau^4\beta$ is detected by $4 v_1^4=\tau^4\cdot 2\NCt{3}v_1^4$. 
Therefore, $4\beta$ is also detected by $2\NCt{3}v_1^4$. Since $\alpha^2$ and $4\beta $ are detected by the same element in filtration $4$, their difference $\Delta:=4\beta-\alpha^2$ with degree $(8,8)$ is detected by some element in filtration greater than or equal to $5$.

Meanwhile, by (1), we have that 
$$4\rho^3\beta=4\eta\alpha=\rho^2\eta^3\alpha=\rho^3\alpha^2.$$
Therefore $\rho^3\cdot \Delta=0$. Since there is no $\rho$ torsion in degree $(8,8)$ in filtrations $\geq 5$, the difference $\Delta$ equals $0$ and the relation holds.
	\end{enumerate}
\end{proof}

The discussion above shows the following theorem.
\begin{thm}
	Multiplication by $\beta$ gives an isomorphism
	$$\pi_{s,w}(\cpltt{\kokq})\to \pi_{s+8,w+8}(\cpltt{\kokq})$$
	 on homotopy groups.
\end{thm}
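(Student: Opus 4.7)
The plan is to combine the defining relation $\tau^4 \beta = v_1^4$ with the two periodicity theorems already in hand. By associativity of multiplication, for every $x \in \pi_{s,w}(\cpltt{\kokq})$ we have
\[
\tau^4 \cdot (\beta \cdot x) \;=\; (\tau^4 \beta) \cdot x \;=\; v_1^4 \cdot x,
\]
so the maps $\beta\cdot : \pi_{s,w} \to \pi_{s+8,w+8}$, $v_1^4\cdot : \pi_{s,w} \to \pi_{s+8,w+4}$, and $\tau^4\cdot : \pi_{s+8,w+8} \to \pi_{s+8,w+4}$ fit into a commutative triangle. Since $\beta$ has coweight zero, $\beta$-multiplication preserves the coweight $c = s-w$, so it suffices to analyze each $c$ separately.

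In the generic range $c \notin \{-4,-5\}$, both of the reference maps in this triangle are isomorphisms: the $v_1^4$-periodicity theorem handles the horizontal map, and Theorem \ref{tauperiod} handles the $\tau^4$ leg, since its source $\pi_{s+8, w+8}$ also has coweight $(s+8)-(w+8)=c$. The identity $\tau^4 \beta = v_1^4$ then realizes $\beta$-multiplication as the composite $(\tau^4)^{-1}\circ v_1^4$, which is an isomorphism.

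The main obstacle is the two boundary coweights $c = -4$ and $c = -5$, where the reference maps degenerate and the composite argument is not sufficient. In coweight $-4$, injectivity of $\beta$ still falls out of $\tau^4 \beta x = v_1^4 x$ combined with the injectivity of $v_1^4$, but surjectivity requires checking that the images of $v_1^4$ and $\tau^4$ inside $\pi_{s+8,w+4}$ agree; I would verify this directly on the $E_\infty$-page, whose coweight $-4$ generators are the negative-cone classes $\NC{i}{3} h_1^a v_1^{2b}$ that are unaffected by the $d_1$-differential formula (\ref{equa:NCd1}). In coweight $-5$ the relation $\tau^4 \beta = v_1^4$ degenerates to $0 = 0$, so I would instead read off the isomorphism directly from the $E_\infty$-page, where multiplication by the detector $\NC{3}{} v_1^2 h_1^2$ of $\beta$ is a visible bijection on the $(j=4)$ negative-cone generators; any potential hidden extension that could obstruct lifting this to homotopy would be ruled out using Lemma \ref{rholem} together with the explicit $\rho$-, $\eta$-, and $\tau^4$-extensions recorded in Propositions \ref{hext}, \ref{tauext11}, and \ref{alphabetarelation}.
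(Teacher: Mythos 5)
Your argument in the generic coweights $c=s-w\notin\{-4,-5\}$ is correct and is exactly the paper's (implicit) argument: $\tau^4(\beta x)=v_1^4x$, and since both the $v_1^4$- and $\tau^4$-periodicity maps are bijective there, $\beta$-multiplication is forced to be an isomorphism. The genuine gap is in the two exceptional coweights, which is where all the content of the theorem lies, and there your proposal only promises a chart check rather than giving one. For $c=-4$ you reduce surjectivity to the statement that the image of $\tau^4:\pi_{s+8,w+8}\to\pi_{s+8,w+4}$ lies in the image of $v_1^4$, and for $c=-5$ you assert that multiplication by the detecting class $\NC{3}{}v_1^2h_1^2$ is a ``visible bijection''; neither is verified, and both are homotopy-level statements that cannot be read off the $E_\infty$-page without confronting hidden extensions (indeed the $\tau^4$-extensions of Proposition \ref{tauext11} are hidden). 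Moreover the supporting details you do give are off: $\NC{i}{3}h_1^av_1^{2b}$ has coweight $-4+2b$, so the coweight $-4$ part of the $E_\infty$-page is not the family you name (it consists of classes such as $\NC{i}{2q+3}h_1^av_1^{2q}$ and $\NC{i}{2q+4}(\tau h_1)h_1^av_1^{2q}$), and since $d_1$ lowers coweight by one, coweight $-4$ both receives and supports differentials, so ``unaffected by (\ref{equa:NCd1})'' is not accurate. As written, the boundary cases are an unexecuted and somewhat misdescribed verification, not a proof.

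The good news is that no chart input is needed at all: the exceptional cases follow formally from the two periodicity theorems, which is presumably what the paper's one-line proof intends. If $c\in\{-4,-5\}$, then Theorem \ref{tauperiod} applied twice (with sources in coweights $c-4\in\{-8,-9\}$ and $c-8\in\{-12,-13\}$, all generic) shows every $x\in\pi_{s,w}(\cpltt{\kokq})$ is uniquely of the form $x=\tau^8u$ with $u\in\pi_{s,w+8}$, and likewise every $y\in\pi_{s+8,w+8}$ is $\tau^8z$ with $z=v_1^4u$ by $v_1^4$-periodicity from coweight $c-8$. Then $\beta x=\beta\tau^8u=\tau^4(\tau^4\beta)u=\tau^4v_1^4u$: surjectivity is immediate since $y=\tau^8v_1^4u'=\beta\tau^{8}u'$ up to the same manipulation, and injectivity follows because $\tau^4v_1^4u=0$ forces $v_1^4u=0$ (injectivity of $\tau^4$ from coweight $c-4$) and hence $u=0$ (injectivity of $v_1^4$ from coweight $c-8$). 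Replacing your two chart checks by this $\tau^8$-shift closes the gap and makes the whole proof purely formal, as in the paper.
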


\begin{rmk}
	The element $\beta$ forgets to the classical Bott periodicity class. 
\end{rmk}

\subsection{Hidden \texorpdfstring{$\omega$}{omega} extensions}\hfill\\

Consider the element $2$ in degree $(0,0,0)$ in the $C_2$-effective $E_\infty$-page.  Because of the presence of the elements $(\rho h_1)^n$ in higher filtration, this element detects infinitely
many elements in $\pi_{0,0}^{C_2}(\cpltt{\kokq} )$. We also typically use the same symbol $2$ to represent twice the identity in $\pi_{0,0}(\cpltt{\kokq} )$.  The two meanings of this symbol are dangerously ambiguous, and we caution the reader to be careful about the distinction. 

\begin{definition}
	Let $\omega$ be the element $\rho\eta+2\in \pi_{0,0}^{C_2}(\cpltt{\kokq} )$. 
\end{definition}

The element $\omega$ is also detected by $2$ in the $C_2$-effective $E_\infty$-page.  In many ways, it has better homotopical properties than the element $2$ in homotopy. Note that $\omega$ equals $1-\epsilon$ where $\epsilon$ is the twist: $S^{1,1}\wedge S^{1,1}\to S^{1,1}\wedge S^{1,1}$ (\cite{dugger2013motivic}).
In particularly, it plays the role of the zeroth $C_2$-equivariant (and $\bbR$-motivic) Hopf map. One way to understand this is to consider the $C_2$-equivariant Adams spectral sequence, where the element $\omega$ is detected by $h_0$ and $2$ is detected by $h_0+\rho h_1$ (see \cite{GHIR}). 

For this reason, we shall consider hidden $\omega$ extensions in $\pi_{*,*}$.  Information about multiplication by $2$ can then be easily obtained from the relation $2 = \omega -  \rho\eta$.

\begin{prop}
\label{omegarelation}
 There are relations:
 \begin{enumerate}
  \item $\omega\eta=0.$
  \item $\omega\rho=0.$
\end{enumerate}
	
\end{prop}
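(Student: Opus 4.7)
The plan is to deduce both relations from identities already valid in $\pi^{C_2}_{*,*}(S^{0,0})$, using the fact that $\cpltt{\kokq}$ is a module spectrum over the $C_2$-equivariant sphere spectrum and that the defining formula $\omega = \rho\eta + 2$ exhibits $\omega$ in $\pi^{C_2}_{0,0}(\cpltt{\kokq})$ as the image, under the unit map $S^{0,0}\to\cpltt{\kokq}$, of the element $1-\epsilon\in\pi^{C_2}_{0,0}(S^{0,0})$, where $\epsilon$ is the (signed) twist on $S^{1,1}\wedge S^{1,1}$ and satisfies $-\epsilon = \langle -1\rangle = 1+\rho\eta$.

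For part (1), I would invoke Morel's fundamental relation $\epsilon\eta=\eta$ in $\bbR$-motivic stable homotopy theory, which transfers to a relation in $\pi^{C_2}_{1,1}(S^{0,0})$ under Betti realization by Proposition \ref{prop:betti} and the compatibility of the Hopf map $\eta$ across the functors in Diagram \ref{magicsquare}. Expanding $\omega\eta = (1-\epsilon)\eta = \eta - \epsilon\eta$ then gives $\omega\eta=0$ in the sphere, and hence in $\pi^{C_2}_{1,1}(\cpltt{\kokq})$ by functoriality of the unit map. Equivalently, at the level of Milnor--Witt relations this is the identity $h\eta=0$ for the hyperbolic form $h = 1+\langle -1\rangle = \omega$.

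For part (2), I would use the symmetric identity $\epsilon\rho = \rho$, equivalently the Steinberg-type relation $h\rho = 0$ in $K^{MW}_1(\bbR)$, which reflects $\rho=[-1]$ together with the fact that the hyperbolic form annihilates $[-1]$. This again realizes to $C_2$-equivariant stable homotopy, and then $\omega\rho = (1-\epsilon)\rho = 0$ in $\pi^{C_2}_{-1,-1}(S^{0,0})$ pushes forward to $\cpltt{\kokq}$.

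The main obstacle is justifying the transport of these sphere-level relations across Betti realization with the correct signs and conventions for $\epsilon$. As a self-contained alternative, I would instead argue entirely within the $C_2$-effective spectral sequence along the following lines: (i) Observe that both $\omega\eta$ and $\omega\rho$ have vanishing nonequivariant underlying images, since $\omega$ forgets to $2$, $\rho$ forgets to $0$, and $2\eta=0$ in $\pi_*(\cpltt{\mathbf{ko}})$; by Lemma \ref{rholem} both products are therefore $\rho$-divisible. (ii) From part (1), $\omega\rho\eta = \rho\cdot(\omega\eta) = 0$, so $\omega\rho$ is $\eta$-torsion. (iii) Inspect the $E_\infty$-page in coweight $0$ at stems $1$ and $-1$ using the charts in Section \ref{subsec:charts}, and rule out all remaining candidate detections filtration-by-filtration using the already-established multiplicative relations (in particular the detection of $\eta$ by $h_1$ and the mod-$2$ structure on $h_1$-multiples in the $E_\infty$-page).
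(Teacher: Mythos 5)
Your primary argument---deducing $\omega\eta=0$ and $\omega\rho=0$ from the $\bbR$-motivic relations of Morel (equivalently the Milnor--Witt identities $h\eta=h\rho=0$ for $h=1+\langle -1\rangle=\omega$), transporting them to $\SH_{C_2}$ via Betti realization, and pushing forward along the unit map to $\cpltt{\kokq}$---is essentially identical to the paper's proof, just with the Milnor--Witt bookkeeping made explicit. The sketched spectral-sequence alternative is not needed (and its final chart-inspection step is left incomplete), but the main route is correct.
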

\begin{proof}
The element $\omega$ also lives in the $\bbR$-motivic stable homotopy groups of the sphere. By \cite{morel2004motivic} and \cite{dugger2013motivic}, there are $\bbR$-motivic relations $\omega\rho=0$ and $\omega\eta=0$. The same relations hold in the $C_2$-equivariant stable homotopy groups of the sphere by taking the $C_2$-equivariant Betti realization, and in the homotopy of $\kokq$ by applying the unit map.
\end{proof}

Using the relations in Proposition \ref{omegarelation}, we can obtain hidden $\omega$ extensions.

\begin{prop}
There are hidden $\omega$ extensions
\begin{enumerate}
	\item  from $\tau h_1$ to $\rho\tau h_1^2$. 
	\item  from $\NC{}{}h_1$ to $\NCt{} h_1^2$. 
\end{enumerate}

\end{prop}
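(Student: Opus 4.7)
The strategy is to expand $\omega = 2 + \rho\eta$ and argue that the $\rho\eta$ contribution is what produces the hidden extension, while the $2$ contribution is absorbed by higher filtration. For each case, pick a homotopy lift $X$ of the source class and write
$$\omega \cdot X \;=\; 2\cdot X \;+\; \rho\eta \cdot X.$$
Since $\eta$ is detected by $h_1$ and $\rho$ is detected by $\rho$ with no hidden extensions, the class $\rho\eta \cdot X$ is detected by the product of $\rho h_1$ with the $E_\infty$-detector of $X$, computed via the multiplicative structure of Section \ref{E1page}.

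For (1), with $X$ a lift of $\tau h_1$, this gives $\rho h_1 \cdot \tau h_1 = \rho\tau h_1^2$, which is nonzero in filtration $2$ in degree $(1,0)$, matching the claimed target. For (2), with $X$ a lift of $\NC{}{}h_1$, this gives $\rho h_1 \cdot \NC{}{}h_1 = \NCt{}\, h_1^2$, matching the claimed target. Thus the $\rho\eta$-part contributes exactly the right class in each case.

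The remaining step is to verify that $2\cdot X$ does not cancel this contribution. The product $2\cdot(\text{detector of }X)$ vanishes already on the $E_1$-page (since $2h_1 = 0$, which is visible from the slice description $\HZ[\eta,\sqrt{\alpha}]/(2\eta,\dots)$), so $2\cdot X$ lies in strictly higher filtration than $\rho\eta\cdot X$. Inspection of the $E_\infty$-charts in the relevant coweights (Section \ref{subsec:charts}) shows that $\rho\tau h_1^2$ and $\NCt{}\, h_1^2$ are the highest-filtration nonzero $E_\infty$-classes in their degrees, so any potential contribution from $2X$ must be zero; equivalently, one can verify consistency against the relations $\omega\eta = 0$ and $\omega\rho = 0$ of Proposition \ref{omegarelation}, which pin down the filtration of $\omega X$ by forcing $\eta\cdot(\omega X) = 0 = \rho\cdot(\omega X)$ in homotopy.

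The main subtlety is precisely this filtration bookkeeping around $2\cdot X$: one must rule out a cancellation that would push $\omega X$ into filtration higher than claimed (or to zero). This is a direct chart-chasing argument, but it is the step where careful reference to the coweight $1$ chart (for (1)) and coweight $-2$ chart (for (2)) is essential. Once this is in place, the decomposition $\omega = 2 + \rho\eta$ immediately yields both extensions.
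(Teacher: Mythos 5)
Your decomposition $\omega X = 2X + \rho\eta X$ correctly identifies the term $\rho\eta X$ as being detected by $\rho\tau h_1^2$ (resp.\ $\NCt{}h_1^2$), but the step where you dispose of the term $2X$ is a genuine gap, and it is exactly where the content of the proposition lies. From $2\cdot \tau h_1=0$ on the $E_1$-page you only learn that $2X$ is detected in filtration at least $2$, which is the \emph{same} filtration as $\rho\tau h_1^2$; so it is not true that $2X$ sits in strictly higher filtration than $\rho\eta X$. Moreover, the observation that $\rho\tau h_1^2$ is the only nonzero $E_\infty$-class available in that degree does not force the contribution of $2X$ to vanish --- it is precisely the class that a hidden $2$-extension on $\tau h_1$ would hit, and if $2X$ were detected by it, the sum $\omega X = 2X+\rho\eta X$ could cancel or jump to higher filtration, destroying the claimed $\omega$-extension. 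Nothing visible in the effective $E_\infty$-chart in that degree distinguishes ``hidden $\omega$-extension'' from ``hidden $2$-extension''; deciding between them is genuine homotopical input, not chart-chasing. Your fallback via $\omega\eta=0$ and $\omega\rho=0$ also pins nothing down: these relations are satisfied just as well if $\omega X=0$ (and every class detected by $\rho\tau h_1^2$ is killed by $\rho$ anyway, since $\rho^2\tau h_1^2$ is hit by $d_1(\tau^2 h_1)$), so they cannot certify that the extension is nonzero.

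The paper supplies exactly the missing input from outside the spectral sequence: by Dugger--Isaksen there is an $\bbR$-motivic relation $\omega\cdot\tau\eta=\rho\tau\eta^2$ in the homotopy of the sphere, which (since $\omega=2+\rho\eta$) is literally the statement that $2\cdot\tau\eta=0$ for the appropriate lift --- i.e.\ the vanishing of your $2X$ term. Betti realization and the unit map $S^{0,0}\to\kokq$ transport this relation to $\kokq$, giving (1), and (2) then follows purely multiplicatively from $\NC{}{}h_1=\NC{}{2}\cdot\tau h_1$ and $\NCt{}h_1^2=\NC{}{2}\cdot\rho\tau h_1^2$, with no further chart inspection needed. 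To repair your argument you would need an independent proof that $2X$ is not detected by the target class (for instance by comparison with the $\bbR$-motivic or $C_2$-equivariant Adams spectral sequence, where $\omega$ and $2$ are detected by $h_0$ and $h_0+\rho h_1$ respectively, or by a bracket/Toda-type argument); as written, the filtration bookkeeping does not close the gap.
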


\begin{pf}\mbox{}

\begin{enumerate}
  \item 
  By \cite[Section 8.3]{dugger2017motivicadamssphere}, there is an $\bbR$-motivic relation $\omega \tau\eta=\rho \tau \eta^2$. This relation holds in the homotopy of the $C_2$-equivariant sphere by taking Betti realization, and in the homotopy of $\kokq$ by applying the unit map.
	\item 
	This follows from $(1)$, since 
	$\NC{}{}h_1=\NC{}{2}\cdot \tau h_1$ and $\NCt{} h_1^2=\NC{}{2} \cdot \rho \tau h_1^2.$
\end{enumerate}

\end{pf}

\begin{rmk}
 The above two $\omega$ extensions generate other $\omega$ extensions under the $\tau^4$ and $v_1^4$ periodicities.	
\end{rmk}

\begin{prop}
	There is a hidden $\omega$ extension from $\NC{3}{}h_1 v_1^2$ to $\NCt{3}v_1^4.$	
\end{prop}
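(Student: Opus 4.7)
The plan is to mimic the proof of part (2) of the preceding proposition, twisting the basic hidden relation $\omega\tau\eta \equiv \rho\tau\eta^2$ by a suitable negative cone class, and then using a $d_1$-identification on the $E_\infty$-page to recognize the resulting detecting class as $\NCt{3}v_1^4$. Note that the statement falls in coweight $0$, which is not directly reached from the two extensions in the preceding proposition via the $\tau^4, v_1^4$ periodicities, so a direct argument is needed.

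Concretely, I would pick a homotopy class $\nu \in \pi^{C_2}_{*,*}(\cpltt{\kokq})$ detecting $\NC{3}{2}v_1^2$ and set $\xi = \nu \cdot \tau\eta$. Since $\NC{3}{2} \cdot \tau = \NC{3}{}$ in the coefficient ring, $\xi$ is a homotopy lift of $\NC{3}{}h_1v_1^2$. Multiplying the hidden extension $\omega\tau\eta \equiv \rho\tau\eta^2$ (part (1) of the preceding proposition) through by $\nu$ then shows that $\omega\xi = \nu \cdot \omega\tau\eta$ is detected by
$$\NC{3}{2}v_1^2 \cdot \rho\tau h_1^2 \;=\; \rho\, \NC{3}{2}\tau \cdot h_1^2 v_1^2 \;=\; \NC{2}{}h_1^2 v_1^2.$$
One should verify that the higher-filtration correction in the relation $\omega\tau\eta \equiv \rho\tau\eta^2$ contributes only to strictly higher filtration in $\omega\xi$; this is automatic since $\nu$ has filtration $2$ and the correction exceeds the filtration of $\rho\tau h_1^2$.

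The main remaining obstacle is to identify $\NC{2}{}h_1^2v_1^2$ with $\NCt{3}v_1^4$ on the $E_\infty$-page. Specializing the $d_1$-computation displayed at the end of Section \ref{subsec:d1diff} (namely $d_1(\NC{i}{4k+1}h_1) = \NC{i-2}{4k+2}h_1^2 + \NC{i-4}{4k+4}v_1^2$) to $i = 4$, $k = 0$ gives the $E_\infty$-identification $\NC{2}{2}h_1^2 = \NCt{4}v_1^2$. Multiplying by $\tau v_1^2$ and applying the multiplicative relations $\tau \cdot \NC{2}{2} = \NC{2}{}$ and $\tau \cdot \NCt{4} = \NCt{3}$ from Section \ref{EM} produces $\NC{2}{}h_1^2v_1^2 = \NCt{3}v_1^4$, completing the proof. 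The trickiest bookkeeping is the multiplicative conventions for the negative cone elements, but these are recorded in the coefficient ring description and the table of generators in Section \ref{E1page}.
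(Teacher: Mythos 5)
Your argument has two fatal gaps. First, the class $\nu$ you need does not exist: $\NC{3}{2}v_1^2$ is not a permanent cycle, so it detects nothing in $\pi_{*,*}(\cpltt{\kokq})$. Indeed, by (\ref{equa:NCd1}) we have $d_1\left(\NC{3}{2}\right)=\NC{1}{3}h_1$, and $d_1(v_1^2)=\tau h_1\cdot h_1^2$ by Proposition \ref{PCd1}, so the Leibniz rule gives
\[
d_1\left(\NC{3}{2}v_1^2\right)=\NC{1}{3}h_1v_1^2+\NC{3}{}h_1^3\neq 0.
\]
This is exactly what separates coweight $0$ from the coweight-$1$ case you are imitating: there the auxiliary factor $\NC{}{2}$ is a $d_1$-cycle, whereas $\NC{3}{2}$ (and $v_1^2$ itself) is not, so no factorization of a lift of $\NC{3}{}h_1v_1^2$ through $\tau\eta$ of this form is available. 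Second, the $E_\infty$-identification you invoke at the end is false: $\NC{2}{}h_1^2v_1^2$ and $\NCt{3}v_1^4$ are distinct nonzero classes in tridegree $(8,4,8)$. A $d_1$ killing their sum would have to originate in tridegree $(9,3,8)$, but every filtration-$3$ class has the form $xh_1^3$ or $xh_1v_1^2$ with $x\in(\HZt)_{6,5}$ or $x\in(\HZt)_{4,5}$, and both groups vanish. The ``multiply by $\tau v_1^2$'' step is not a legal operation on the $E_2$-page: $\tau$ is not an element of the spectral sequence (the relevant coefficients lie in $(\HZcplt)_{*,*}$, which contains only even powers of $\tau$), and $v_1^2$ supports a $d_1$, so an $E_2$-relation cannot be multiplied through by either. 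Note that even if both steps went through, your computation would place the product in the coset of $\NC{2}{}h_1^2v_1^2$, a different class from $\NCt{3}v_1^4$, so you would be proving a different (and incorrect) extension.

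For comparison, the paper's proof works with $\beta$ itself, detected by $\NC{3}{}h_1v_1^2$, and never factors it. From $4\beta=\alpha^2$ being detected by $2\NCt{3}v_1^4$ (Proposition \ref{alphabetarelation}), the element $\omega\beta$ must be detected in filtration $4$ of degree $(8,4,8)$, hence by $\NC{3}{}h_1v_1^2\cdot\rho h_1=\NC{2}{}h_1^2v_1^2$, by $\NCt{3}v_1^4$, or by their sum; since $\rho\omega=0$ (Proposition \ref{omegarelation}) while every class detected by a candidate involving $\NC{2}{}h_1^2v_1^2$ is $\rho$-torsion free, only $\NCt{3}v_1^4$ remains. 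Any repair of your approach needs an input of this kind that distinguishes the two filtration-$4$ classes; the relation $\omega\tau\eta=\rho\tau\eta^2$ alone cannot reach this coweight.
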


\begin{pf}

The element $\beta$ is detected by $\NC{3}{}h_1 v_1^2$. We will show that $\omega \beta$ is detected by $\NCt{3}v_1^4.$

By Proposition \ref{alphabetarelation}, the element $4\beta=\alpha^2$ is detected by $2\NCt{3}v_1^4.$ 
Therefore, the element $\omega \beta$ is detected in degree $(8,4,8)$ in the $E_\infty$-page, by $\NC{3}{}h_1v_1^2\cdot \rho h _1$, by $\NCt{3}v_1^4$, or by the sum of two. The homotopy elements that are detected by $\NC{3}{}h_1v_1^2\cdot \rho h _1$ or the sum $\NC{3}{}h_1v_1^2\cdot \rho h _1+\NCt{3}v_1^4$ are all $\rho$-torsion free. Since $\rho\cdot \omega\beta=0$ by Proposition \ref{omegarelation}, the only remaining possibility is that $\NCt{3}v_1^4$ detects $\omega\beta$.

\end{pf}

\section{Charts}
\label{charts}

In this section, we give the charts of the $E_1$ and $E_\infty$-pages of the $C_2$-effective spectral sequence for $\kokq.$
We organize this tri-graded spectral sequence by the coweight $s-w$. 

\subsection{\texorpdfstring{$E_1$}{E1}-page charts}

The charts on pages \pageref{fig:E11}-\pageref{fig:E12} depict the $E_1$-page of the $C_2$-effective spectral sequence for $\kokq$. The details are in Section \ref{esss} and Section \ref{EM}. 

The $E_1$-page is a $\bbZ_2[\tau^2]$-module, and its general description is presented by even/odd coweights. To avoid overlap, the positive cone (PC) elements and the negative cone (NC) elements of the same coweight are separated and shown on two different charts.

\subsection{\texorpdfstring{$E_2$}{E2}-page charts}

The charts on pages \pageref{fig:E21}-\pageref{fig:E24} depict the $E_\infty$-page of the $C_2$-effective spectral sequence for $\kokq$ with hidden extensions. The calculation details are in Section \ref{diff} and Section \ref{ext}. 

The $E_2$-page is a $\bbZ_2[\tau^4]$-module, and its general description is presented by mod $4$ coweights. Similar to the $E_1$ charts, the positive cone (PC) elements and the negative cone (NC) elements of the same coweight are shown on different charts.

When the coweight is $3 \pmod 4$ and $\geq -1$, the $E_\infty$-page is everywhere zero. Therefore, we include coweight $3 \pmod 4$ charts only for coweight $\leq -5.$ All elements in coweight $3 \pmod 4$ and $\leq -5$ are negative cone elements.

When the coweight is $0 \pmod 4$, some negative cone elements are $\tau^{-4}$ torsion. In the second chart on Page \pageref{fig:E24}, each dot is labelled with a number that indicates the order of the $\tau^{-4}$ torsion. The elements represented by the squares are $\tau^{-4}$ torsion free.

\subsection{Sample coweight 0 charts}

When coweight is $0 \pmod 4$, it is difficult to depict the results in full generality because of the interaction between positive and negative cone elements. To give a sense of the computation, we attach two sample charts which depict the $E_\infty$-page in coweight $0$ and coweight $4$ on page \pageref{fig:sample}.

\subsection{The key of the charts}\hfill

\label{key}

The key of the charts is as follows:

$~~(1)$ Dots and squares indicate groups as displayed on the charts.

$~~(2)$ Horizontal lines indicate multiplications by $\rho.$

$~~(3)$ Diagonal lines indicate multiplications by $h_1$.

$~~(4)$ Horizontal arrows indicate infinite sequences of multiplications by $\rho$.

$~~(5)$ Diagonal arrows indicate infinite sequences of multiplications by $h_1$.

$~~(6)$ Red dashed lines indicate hidden extensions by $\rho$.

$~~(7)$ Black dashed lines indicate hidden extensions by $h_1$.

$~~(7)$ Orange dashed lines indicate hidden extensions by $\omega$.


%
%
%
%
%

\newpage

\subsection{The charts}\hfill
\label{subsec:charts}
%
\begin{figure}[H]
\begin{center}
\makebox[0.95\textwidth]{\includegraphics[trim={0cm, 0cm, 0cm, 0cm},clip,page=2,scale=0.8]{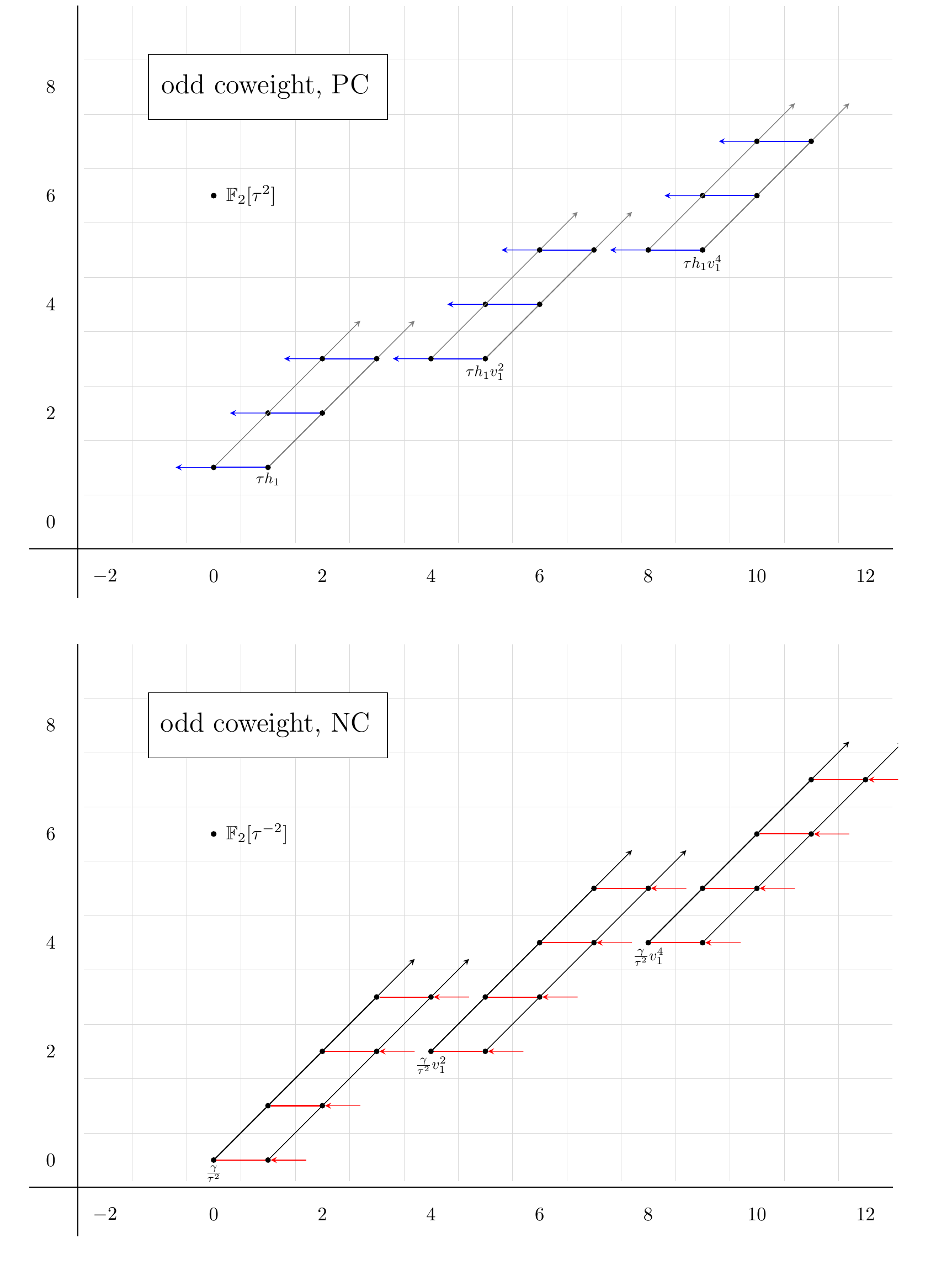}}
\label{fig:E11}
\hfill
\end{center}
\end{figure}

\begin{figure}[H]
\begin{center}
\makebox[0.95\textwidth]{\includegraphics[trim={0cm, 0cm, 0cm, 0cm},clip,page=1,scale=0.8]{e1description}}
\label{fig:E12}
\hfill
\end{center}
\end{figure}
\newpage

\eject \pdfpagewidth=15in \pdfpageheight=16in

\begin{figure}[H]
\begin{center}
{\includegraphics[trim={0cm, 0cm, 0cm, 0cm},clip,page=2]{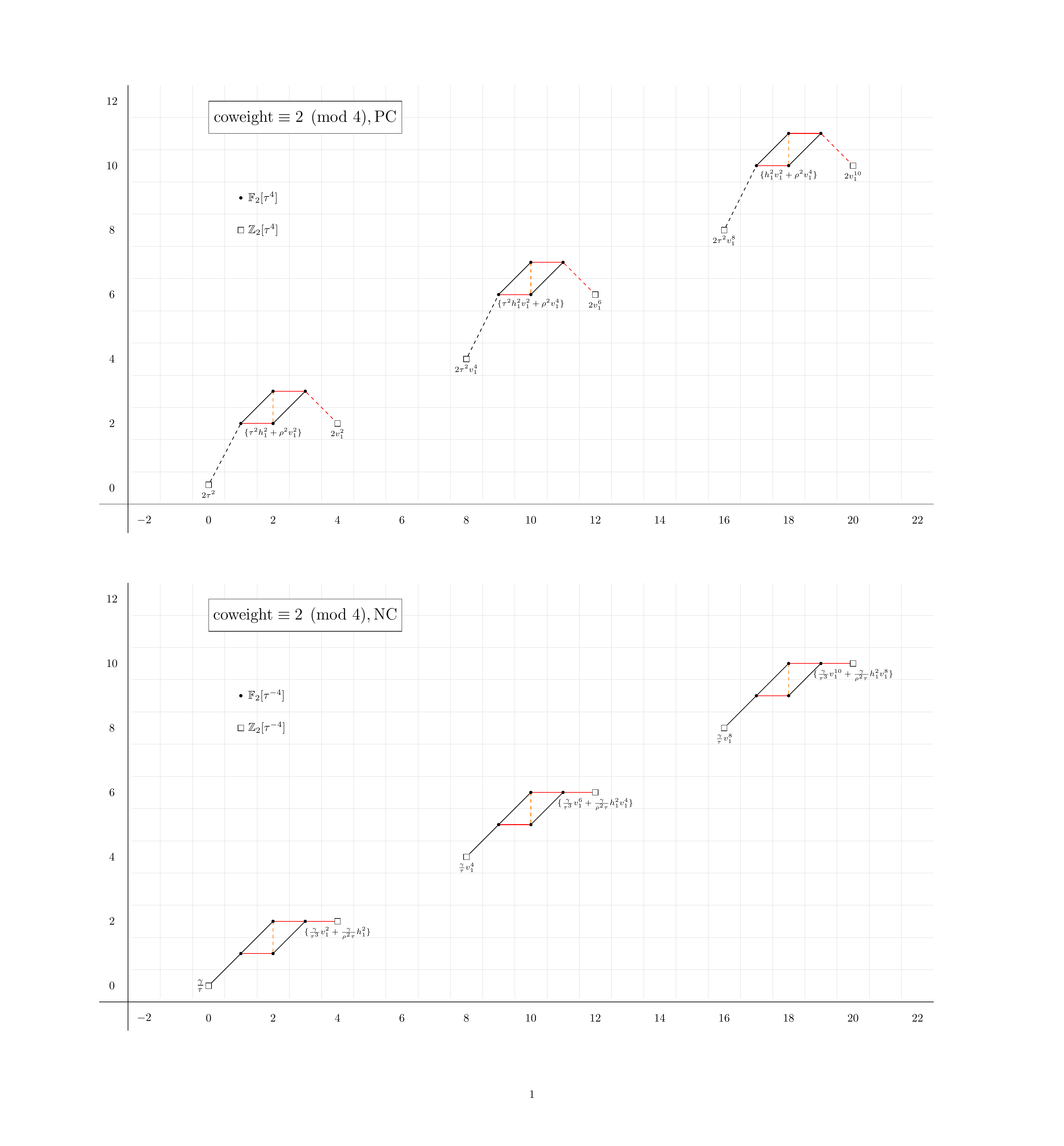}}
\label{fig:E22}
\hfill
\end{center}
\end{figure}

\begin{figure}[H]
\begin{center}
{\includegraphics[clip,page=1]{e2_nocolor}}
\label{fig:E21}
\hfill
\end{center}
\end{figure}

\begin{figure}[H]
\begin{center}
{\includegraphics[trim={0cm, 0cm, 0cm, 0cm},clip,page=3]{e2_nocolor}}
\label{fig:E23}
\hfill
\end{center}
\end{figure}

\begin{figure}[H]
\begin{center}
{\includegraphics[trim={0cm, 0cm, 0cm, 0cm},clip,page=4]{e2_nocolor}}
\label{fig:E24}
\hfill
\end{center}
\end{figure}

\begin{figure}[H]
\begin{center}
{\includegraphics[trim={0cm, 0cm, 0cm, 0cm},clip,page=5]{e2_nocolor}}
\label{fig:sample}
\hfill
\end{center}
\end{figure}

\newpage
\eject \pdfpagewidth=8.5in \pdfpageheight=11in




\newpage

\bibliographystyle{alpha}
  \bibliography{bib.bib}

\end{document}